\documentclass[pdflatex,sn-mathphys-num]{sn-jnl}

\usepackage{graphicx}
\usepackage{multirow}
\usepackage{booktabs}
\usepackage{amsmath,amssymb,amsfonts,amsthm}
\usepackage{mathrsfs}
\usepackage[title]{appendix}
\usepackage{xcolor}
\usepackage{textcomp}
\usepackage{manyfoot}
\usepackage{algorithm}
\usepackage{algorithmicx}
\usepackage{algpseudocode}
\usepackage{listings}
\usepackage{lipsum}
\usepackage{epstopdf}
\usepackage{tikz}
\usepackage{subcaption}
\usepackage{cleveref}
\usepackage{placeins}

\providecommand{\diag}{\operatorname{diag}}
\providecommand{\Exp}{\operatorname{Exp}}
\providecommand{\grad}{\operatorname{grad}}
\providecommand{\Hess}{\operatorname{Hess}}

\providecommand{\tr}{\operatorname{tr}}
\providecommand{\Sym}{\operatorname{Sym}}
\providecommand{\Skew}{\operatorname{Skew}}

\newcommand{\AP}{\mathrm{AP}}

\providecommand{\SPD}{\Sym_{++}}
\providecommand{\ipF}[2]{\langle #1,#2\rangle_F}
\providecommand{\ipg}[3]{\langle #1,#2\rangle_{#3}}

\theoremstyle{thmstyleone}%
\newtheorem{theorem}{Theorem}
\newtheorem{proposition}[theorem]{Proposition}%

\newtheorem{lemma}[theorem]{Lemma}
\newtheorem{corollary}[theorem]{Corollary}

\theoremstyle{thmstyletwo}%

\theoremstyle{thmstylethree}%
\newtheorem{definition}{Definition}%

\begin{document}

\title[Riemannian Optimization over Symmetric Positive Definite Matrices with the Alpha-Procrustes Geometry]{Riemannian Optimization over Symmetric Positive Definite Matrices with the Alpha-Procrustes Geometry}


\author*[1,3]{\fnm{Derun} \sur{Zhou}}\email{zhouderun@nii.ac.jp}

\author[2,3]{\fnm{Keisuke} \sur{Yano}}\email{yano@ism.ac.jp}

\author[1,3]{\fnm{Mahito} \sur{Sugiyama}}\email{mahito@nii.ac.jp}

\affil[1]{\orgdiv{National Institute of Informatics}, \orgaddress{\street{Hitotsubashi}, \city{Chiyoda-ku}, \postcode{101-8430}, \state{Tokyo}, \country{Japan}}}

\affil[2]{\orgdiv{The Institute of Statistical Mathematics}, \orgaddress{\street{10-3 Midori-cho}, \city{Tachikawa}, \postcode{190-8562}, \state{Tokyo}, \country{Japan}}}

\affil[3]{\orgdiv{The Graduate University for Advanced Studies}, \orgname{SOKENDAI}, \orgaddress{\street{Shonan Village}, \city{Hayama}, \postcode{240-0193}, \state{Kanagawa}, \country{Japan}}}


\abstract{\begin{abstract}

In this paper, we study the Alpha-Procrustes \((\mathrm{AP})\) geometry for
Riemannian optimization on the symmetric positive definite \((\mathrm{SPD})\)
matrix manifold. This geometry forms a one-parameter family that includes
several well-known metrics as special cases, such as the Log-Euclidean
\((\mathrm{LE})\) metric at \(\alpha=0\) and the Bures-Wasserstein
\((\mathrm{BW})\) metric at \(\alpha=1/2\). Our analysis begins with the observation that different choices of \(\alpha\) induce distinct Riemannian metrics, which in turn yield different condition numbers of the Riemannian Hessian at the minimizer.

In particular, we show that, for \(\alpha=1\), the Riemannian Hessian
condition number at a minimizer is bounded by a constant multiple of the
Euclidean Hessian condition number, independently of the condition number of the underlying SPD matrix. This contrasts with the
BW and the Affine-Invariant metrics, whose Riemannian Hessian condition number depends linearly and
quadratically, respectively, on the condition number of the minimizer. More generally, for each fixed \(\alpha\neq 1\), the
\(\mathrm{AP}\)-Riemannian Hessian condition number exhibits a
power-law dependence on the condition number of the underlying
SPD matrix, with exponent \(2|\alpha-1|\).
We further prove that the
\(\mathrm{AP}\) geometry has nonnegative sectional curvature for every
\(\alpha\neq 0\), extending the known curvature property of the \(\mathrm{BW}\) geometry. Combining these results, we show that the metric with \(\alpha=1\) provides stronger local convergence guarantees for Riemannian optimization algorithms in ill-conditioned regimes. We also establish a geodesic convexity
transfer principle: geodesic convexity under the \(\alpha=1/2\)
\((\mathrm{BW})\) geometry can be transferred to general
\(\mathrm{AP}\) geometries through the power transformation
\(P\mapsto P^{2\alpha}\). Extensive numerical experiments on weighted least
squares, trace regression, and the Sylvester equation support our theoretical
findings.
\end{abstract}
}

\keywords{eigenvalue, ill-conditioned system, matrix manifolds, preconditioning, Riemannian optimization, symmetric positive definite matrices}



\maketitle

\section{Introduction}\label{sec1}
Optimization and learning over symmetric positive definite (SPD) matrices play an important role in a wide range of applications, including metric and kernel learning \cite{tsuda2005matrix,guillaumin2009you,suarez2021tutorial}, medical imaging \cite{pennec2006riemannian}, computer vision \cite{harandi2014manifold}, domain adaptation \cite{mahadevan2018unified}, and the modeling of time-varying data \cite{brooks2019exploring}. Recent studies have also explored the use of SPD-valued representations and SPD-aware layers in deep neural networks \cite{huang2017riemannian}.

The set \(\SPD(n)\) forms a smooth manifold \cite{bhatia2009positive}.
Different Riemannian metrics on \(\SPD(n)\) induce different geometric
structures, including Riemannian distances, geodesics, gradients, Hessians,
and curvature. These structures provide the foundation for Riemannian
optimization and geometry-aware learning methods on \(\SPD(n)\)
\cite{absil2008optimization, boumal2023introduction}.

In Riemannian optimization, the choice of metric directly affects the local
behavior of algorithms through the Riemannian Hessian at a minimizer. In
particular, if
\(\kappa:=\kappa\!\bigl(\Hess_g f(P^\star)\bigr)\) denotes the condition
number of the Riemannian Hessian at a local minimizer \(P^\star\), then
\(\kappa\) governs the local convergence rate of first-order methods; for
example, the asymptotic local linear rate of Riemannian gradient descent is
of the form \(1-1/\mathcal{O}(\kappa)\)
\cite{boumal2023introduction}. On the SPD manifold, this condition number
depends on both the Euclidean Hessian of the objective and the chosen
Riemannian metric. Consequently, when the underlying SPD matrix is
ill-conditioned, some commonly used metrics may lead to poorly conditioned
Riemannian Hessians and hence slower local convergence.


A variety of Riemannian metrics have been studied on the \(\mathrm{SPD}\)
manifold, including the Affine-Invariant \((\mathrm{AI})\) metric
\cite{pennec2006riemannian}, the Bures--Wasserstein \((\mathrm{BW})\) metric
\cite{Takatsu2011,malago2018wasserstein,bhatia2019bures}, the
Log-Euclidean \((\mathrm{LE})\) metric \cite{minh2014log}, the Log-Det metric
\cite{sra2012new}, the Log-Cholesky geometry \cite{lin2019riemannian}, and
other constructions motivated by invariance and symmetry
\cite{dryden2009non}. Among these choices, the \(\mathrm{AI}\) and
\(\mathrm{BW}\) metrics are among the most widely used in Riemannian
optimization algorithms \cite{boumal2014manopt}. A key difference is that the
\(\mathrm{BW}\) metric operator depends linearly on the underlying
\(\mathrm{SPD}\) matrix, whereas the \(\mathrm{AI}\) metric operator depends
quadratically. This distinction affects the condition number of the
corresponding Riemannian Hessian and makes the \(\mathrm{BW}\) metric more
robust than the \(\mathrm{AI}\) metric for optimization problems involving
ill-conditioned \(\mathrm{SPD}\) matrices. In addition, many important
optimization problems are geodesically convex under the \(\mathrm{AI}\)
metric, and related geodesic convexity results have also been established
under the \(\mathrm{BW}\) metric \cite{han2021riemannian}. These geometric
structures provide theoretical foundations for efficient Riemannian
optimization on the \(\mathrm{SPD}\) manifold.


In this work, we focus on the Alpha-Procrustes (AP) geometry, a one-parameter family of Riemannian geometries on \(\SPD(n)\) parameterized by \(\alpha\), obtained by generalizing the Procrustes distance optimization problem \cite{minh2022alpha}. As \(\alpha\) varies, this family continuously interpolates between several important SPD geometries, recovering the LE geometry at \(\alpha=0\) and the BW geometry at \(\alpha=1/2\).
Moreover, the main ingredients required for Riemannian optimization under the AP geometry, including the Riemannian gradient and Hessian, can be systematically derived from the associated Riemannian submersion structure. 
These features make the AP geometry a promising framework for optimization over $\SPD(n)$. 
In detail, we investigate the optimization behavior induced by the AP geometry both theoretically and empirically, and show that it provides a viable alternative to the commonly used AI and BW geometries. In particular, our contributions are as follows.

\begin{enumerate}
\item We observe that, when \(\alpha=1\), the eigenvalues of the matrix representation of the Riemannian metric operator remain uniformly bounded, independently of the underlying SPD matrix; this follows from \Cref{thm:riem_hessian_condition_alpha1}. 
As a result, the \(\alpha=1\) metric is better suited than the \(\mathrm{AI}\), \(\mathrm{BW}\), and \(\mathrm{LE}\)
metrics, as well as other fixed \(\mathrm{AP}\) metrics with
\(\alpha\neq 1\), for optimizing ill-conditioned \(\mathrm{SPD}\) matrices, as established by the comparison in \Cref{sec:comparison_ai_ap}.

\item In contrast to the non-positively curved AI geometry, the \(\mathrm{AP}\) geometry, including the \(\mathrm{BW}\) geometry as a special case, has nonnegative sectional curvature, as proved in Proposition~\ref{prop:ap_nonnegative_curvature}. This allows Riemannian optimization algorithms under the \(\mathrm{AP}\) geometry to retain faster convergence rates.

\item For all Riemannian metrics, we analyze the convergence rates of Riemannian steepest descent and Riemannian trust-region methods, and highlight how the condition number of the Riemannian Hessian affects these rates; this connection is formalized in \Cref{thm:rsd_local_convergence,thm:rtr_local_convergence}.

\item We show that geodesic convexity under the \(\mathrm{AP}_{1/2}\) \((\mathrm{BW})\) geometry can be transferred to any \(\mathrm{AP}_{\alpha}\) geometry with \(\alpha\neq 0\): specifically, if \(h\) is geodesically convex under \(\mathrm{AP}_{1/2}\), then the function \(F_{\alpha}(P)=h(P^{2\alpha})\) is geodesically convex under \(\mathrm{AP}_{\alpha}\). This transfer principle is established in
Proposition~\ref{prop:transfer-geodesic-convexity}.

\item We support our analysis with extensive experiments on applications such as weighted least squares, trace regression, and the Sylvester equation. 

\end{enumerate}

The rest of the paper is summarized as follows. In Section~\ref{sec2}, we review the Riemannian metric of the AP Geometry. Section~\ref{sec3} introduces the coordinate representation of the AP Metric. Section~\ref{sec4} develops condition number estimates for the Riemannian Hessian at a minimizer. Section~\ref{sec5} explains how the condition number of the Riemannian Hessian at a minimizer governs the local convergence behavior of different optimization algorithms. 
Section~\ref{sec6} studies geodesic convexity under the AP metric and establishes a transfer principle from the \(\AP_{1/2}\) \((\mathrm{BW})\) geometry to general \(\AP_\alpha\) geometries.
Section~\ref{sec7} presents numerical experiments showing that the metric with \(\alpha=1\) provides a robust geometric framework for Riemannian optimization problems involving ill-conditioned SPD matrices.

\section{Riemannian Metric of the Alpha-Procrustes Geometry}\label{sec2}
In this section, we introduce the Riemannian metric associated with the AP geometry on the manifold of SPD matrices, mainly following \cite{minh2022alpha}. To this end, we first review the manifold structures of the general linear group \(\mathrm{GL}(n)\) and \(\SPD(n)\), together with their tangent spaces.

Let \(\mathrm{M}(n)\) denote the vector space of real \(n \times n\) matrices, equipped with the Frobenius inner product
\begin{equation}
\ipF{X}{Y} = \tr(X^\top Y),
\end{equation}
where \(\tr(\cdot)\) denotes the trace operator.
The general linear group is defined by
\begin{equation}
\mathrm{GL}(n) = \{A \in \mathrm{M}(n) : \det A \neq 0\}.
\end{equation}
Since \(\mathrm{GL}(n)\) is an open subset of \(\mathrm{M}(n)\), it is a smooth manifold of dimension \(n^2\). Accordingly, for any \(A \in \mathrm{GL}(n)\), its tangent space is naturally identified with \(\mathrm{M}(n)\), namely,
\begin{equation}
T_A \mathrm{GL}(n) \cong \mathrm{M}(n).
\end{equation}

The manifold of symmetric positive definite (SPD) matrices is defined by
\begin{equation}
\SPD(n)
=
\left\{
P \in \mathbb{R}^{n\times n}
:\;
P^\top = P,\;
x^\top P x > 0
\quad \text{for all } x \in \mathbb{R}^n \setminus \{0\}
\right\}.
\end{equation}
The set \(\SPD(n)\) is a smooth manifold of dimension \(n(n+1)/2\). Its tangent space at any point \(P \in \SPD(n)\) is naturally identified with \(\Sym(n)\), namely,
\begin{equation}
T_P \SPD(n) \cong \Sym(n),
\end{equation}
where
\begin{equation}
\Sym(n) = \{X \in \mathbb{R}^{n\times n} : X^\top = X\}
\end{equation}
denotes the space of symmetric matrices.

We now introduce the AP metric; see Appendix \ref{sec: details of alpha procrustes geometry} for details.
To this end, we prepare the operator
\(\mathcal L_{P,\alpha} : \Sym(n) \to \Sym(n)\) for each fixed \(P \in \SPD(n)\) and \(\alpha \in \mathbb{R}\) as follows: for any
\(Y \in \Sym(n)\), \(\mathcal L_{P,\alpha}(Y)\) is the unique matrix
\(H \in \Sym(n)\) satisfying
\begin{equation}
\label{eq:LPalphadef}
(D\exp)(\log P)\circ (D\log)(P^{2\alpha})
\big[HP^{2\alpha}+P^{2\alpha}H\big]
=Y.
\end{equation}
In this expression, \(\exp\) denotes the matrix exponential
\[
\exp(X):=\sum_{k=0}^{\infty}\frac{X^k}{k!},
\]
and \(\log\) denotes the principal matrix logarithm, defined for
\(P\in\SPD(n)\) with eigendecomposition
\[
P=Q\,\mathrm{diag}(\lambda_1,\dots,\lambda_n)\,Q^\top,
\]
where \(Q\in \mathrm{O}(n)\) is an orthogonal matrix and \(\lambda_i>0\) are the eigenvalues of \(P\), by
\[
\log(P):=
Q\,\mathrm{diag}(\log\lambda_1,\dots,\log\lambda_n)\,Q^\top.
\]
Moreover, \(D\) denotes the Fr\'echet derivative of a matrix-valued function. More precisely, if
\(f:\mathbb{R}^{n\times n}\to\mathbb{R}^{n\times n}\), then its Fr\'echet derivative at \(A\) in the direction \(E\) is defined by
\[
(Df)(A)[E]
=
\lim_{t\to0}\frac{f(A+tE)-f(A)}{t}.
\]
Using the the operator \(\mathcal L_{P,\alpha}\), the AP metric is defined as
\begin{equation}
\label{eq:riemannian_metric}
g^{(\alpha)}_P(X,Y)
=
4\,\tr\!\Big(
\mathcal L_{P,\alpha}(X)\,P^{2\alpha}\,
\mathcal L_{P,\alpha}(Y)
\Big),
\qquad
P \in \SPD(n),\; X,Y \in \Sym(n).
\end{equation}

\section{Coordinate Representation of the Alpha-Procrustes Metric}\label{sec3}
In this section, we derive the coordinate representation of the Alpha-Procrustes (AP) metric on \(\SPD(n)\). 

\subsection{Coordinate representation of the operator }
Since the Riemannian metric in \eqref{eq:riemannian_metric} involves the operator \(\mathcal L_{P,\alpha}\) defined in \eqref{eq:LPalphadef}, we first need to characterize this operator explicitly. To this end, we begin with recalling the classical definition of a matrix function and the corresponding Dalecki\u{\i}--Kre\u{\i}n formula for its Fr\'echet derivative.
Let \(P\in\SPD(n)\) admit the eigendecomposition
\[
P=Q\Lambda Q^\top,\qquad 
\Lambda=\diag(\lambda_1,\dots,\lambda_n),\ \lambda_i>0,
\]
where \(Q\in O(n)\) is an orthogonal matrix. For a scalar function \(f\) defined on an open interval containing the spectrum
$\sigma(P)=\{\lambda_1,\dots,\lambda_n\}$,
the associated classical matrix function is defined by
\[
f(P):=Q f(\Lambda) Q^\top,
\qquad
f(\Lambda):=\diag\!\bigl(f(\lambda_1),\dots,f(\lambda_n)\bigr).
\]

\begin{lemma}[The Dalecki\u{\i}--Kre\u{\i}n theorem {\normalfont\cite{daletskii1965integration}; see also \cite[Theorem~2.10]{noferini2016dalecki}}]
\label{lem:DaleckiiKrein}
Let \(P\in\SPD(n)\) admit the eigendecomposition $P=Q\Lambda Q^{\top}$.
Assume that \(f\) is continuously differentiable on an open interval containing
\(\sigma(P)\). Then the Fr\'echet derivative of the associated classical matrix function \(f(P)\) at \(P\),
applied to the tangent vector \(E \in \Sym(n)\), is given by
\[
(Df)(P)[E]
=
Q\bigl(F \odot \widetilde E\bigr)Q^\top \quad\text{with}\quad\widetilde E:=Q^\top E Q,
\]
where \(\odot\) denotes the Hadamard product and
\[
F_{ij}
=
\begin{cases}
\dfrac{f(\lambda_i)-f(\lambda_j)}{\lambda_i-\lambda_j}, & \lambda_i\neq\lambda_j,\\[8pt]
f'(\lambda_i), & \lambda_i=\lambda_j.
\end{cases}
\]
\end{lemma}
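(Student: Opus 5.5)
The plan is to reduce the statement to the diagonal case using orthogonal equivariance, and then to prove the diagonal case by expanding \(f\) as a polynomial and passing to the limit.

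\emph{Reduction to the diagonal case.} Since \(f(P)=Qf(\Lambda)Q^\top\) and \(Q\) is fixed, any symmetric \(E\) satisfies \(P+tE=Q(\Lambda+t\widetilde E)Q^\top\). For small \(t\) the matrix \(\Lambda+t\widetilde E\) is symmetric with spectrum inside the interval where \(f\) is \(C^1\), so \(f(Q(\Lambda+t\widetilde E)Q^\top)=Qf(\Lambda+t\widetilde E)Q^\top\). It is therefore enough to show
\[
(Df)(\Lambda)[\widetilde E]=F\odot\widetilde E
\]
for symmetric \(\widetilde E\).

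\emph{Polynomial case.} I first take \(f(x)=x^k\). The product rule gives \(D(X^k)[E]=\sum_{m=0}^{k-1}X^mEX^{k-1-m}\). At \(X=\Lambda\), the \((i,j)\) entry is \(\widetilde E_{ij}\sum_{m}\lambda_i^m\lambda_j^{k-1-m}\). This sum equals \((\lambda_i^k-\lambda_j^k)/(\lambda_i-\lambda_j)\) when \(\lambda_i\neq\lambda_j\), and equals \(k\lambda_i^{k-1}\) when \(\lambda_i=\lambda_j\). By linearity the formula then holds for every polynomial \(p\), with \(F\) replaced by the divided-difference matrix \(F^{[p]}\).

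\emph{Passage to general \(C^1\) functions.} Fix a compact interval \(I\) inside the domain that contains the spectra of all symmetric matrices close to \(\Lambda\). By Weierstrass, choose polynomials \(p_m\) such that \(p_m\to f\) and \(p_m'\to f'\) uniformly on \(I\); one way is to approximate \(f'\) and then integrate. For symmetric \(X,Y\) with spectra in \(I\), the polynomial formula together with the mean value theorem gives a bound on divided differences, and hence the estimate
\[
\|p(X)-p(Y)\|_F\le \sup_I|p'|\,\|X-Y\|_F .
\]
To prove it, integrate \(Dp\) along the segment from \(Y\) to \(X\). In the eigenbasis of each intermediate point, \(Dp\) acts as a Hadamard multiplier whose entries are bounded by \(\sup_I|p'|\), and the Frobenius norm is orthogonally invariant. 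Applying this estimate to \(p_m-p_l\) shows that the maps \(X\mapsto p_m(X)\) converge in \(C^1\) on symmetric matrices near \(\Lambda\). The limit of \(p_m(X)\) is \(f(X)\), which follows from uniform convergence applied eigenvalue by eigenvalue. Uniform convergence of the derivatives then lets me exchange limit and derivative, and \(F^{[p_m]}\to F\) entrywise. This yields \((Df)(\Lambda)[\widetilde E]=F\odot\widetilde E\).

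The main obstacle is this last step: obtaining the uniform Lipschitz estimate \(\|Dp(X)\|\le\sup_I|p'|\) at \emph{all} nearby symmetric \(X\), not only at \(\Lambda\). Without it the limit and the derivative cannot be interchanged. I handle it by applying the already-proved polynomial formula at each \(X\) in its own eigenbasis.
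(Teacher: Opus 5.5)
Your proof is correct. There is no step-by-step comparison to make, because the paper does not prove this lemma; it quotes it from Dalecki\u{\i}--Kre\u{\i}n and from Theorem~2.10 of Noferini. The only piece of your argument that the paper writes out is your first step, the orthogonal-equivariance reduction $(Df)(P)[E]=Q\,(Df)(\Lambda)[\widetilde E]\,Q^\top$, which appears inside the proof of Theorem~\ref{thm:Lalpha_entrywise}.

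The rest is the standard textbook proof (e.g.\ in Bhatia's \emph{Matrix Analysis}): the product-rule computation for $x^k$, then simultaneous uniform approximation of $f$ and $f'$ by polynomials on a compact interval $I$. The key estimate is $\|Dp(Z)\|\le\sup_I|p'|$ at every symmetric $Z$ with spectrum in $I$. It makes $(Dp_m)$ uniformly Cauchy and justifies exchanging limit and derivative, and you correctly identified it as the crux.

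Two points you use tacitly should be stated explicitly. First, the set of symmetric matrices with spectrum in $I$ is convex, because $\lambda_{\max}$ is convex and $\lambda_{\min}$ concave on $\Sym(n)$ by the Rayleigh-quotient characterization. So the segment from $Y$ to $X$ stays where your Hadamard-multiplier bound applies. Second, the limit interchange should be carried out on an open convex set such as $\{X\in\Sym(n):\|X-\Lambda\|_2<\delta\}$ with $[\min_i\lambda_i-\delta,\max_i\lambda_i+\delta]\subset I$.

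Compared with the citation, your route is self-contained and gives genuine Fr\'echet differentiability on $\Sym(n)$ under only a $C^1$ hypothesis. It also never uses positivity of $P$. That matters because the paper applies the lemma to $(D\exp)(\log\Lambda)$, where $\log\Lambda$ is symmetric but need not be positive definite.
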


Using the spectral Fr\'echet derivative characterization in
Lemma~\ref{lem:DaleckiiKrein}, we derive an explicit
entrywise expression for the operator $\mathcal L_{P,\alpha}$ in the eigenbasis of $P$.

\begin{theorem}[Entrywise closed-form of $\mathcal L_{P,\alpha}$ in the eigenbasis]
\label{thm:Lalpha_entrywise}
Let $P\in\SPD(n)$ admit an eigendecomposition $P=Q\Lambda Q^\top$ with
$\Lambda=\mathrm{diag}(\lambda_1,\dots,\lambda_n)$ and $\lambda_i>0$.
Define $\widetilde Y:=Q^\top YQ$ for each $Y\in\Sym(n)$.
Fix $\alpha\in\mathbb{R}\setminus\{0\}$. 
Let $H=\mathcal L_{P,\alpha}(Y)$ be the unique symmetric solution of
\begin{equation}
\label{eq:def_Lalpha_opt}
(D\exp)(\log P)\circ (D\log)(P^{2\alpha})\,[HP^{2\alpha}+P^{2\alpha}H]=Y.
\end{equation}
Then the solution in the eigenbasis of $P$, $\widetilde H=Q^{\top} H Q$, is given by
\begin{equation}
\label{eq:Lalpha_entrywise_opt}
\widetilde H_{ij}=c_{ij}^{(\alpha)}\,\widetilde Y_{ij},
\end{equation}
where
\begin{equation}
\label{eq:cij_alpha_opt}
c_{ii}^{(\alpha)}=\frac{1}{2\lambda_i},\qquad
c_{ij}^{(\alpha)}=
\frac{\lambda_i^{2\alpha}-\lambda_j^{2\alpha}}
{2\alpha\,(\lambda_i-\lambda_j)\,(\lambda_i^{2\alpha}+\lambda_j^{2\alpha})}
\quad (i\neq j).
\end{equation}
Moreover, we have
\[
\lim_{\lambda_j \to \lambda_i}
\frac{\lambda_i^{2\alpha}-\lambda_j^{2\alpha}}
{2\alpha(\lambda_i-\lambda_j)(\lambda_i^{2\alpha}+\lambda_j^{2\alpha})}
=
\frac{1}{4\lambda_i}.
\]
\end{theorem}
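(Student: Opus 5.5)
The plan is to conjugate everything into the eigenbasis of \(P\) and then apply Lemma~\ref{lem:DaleckiiKrein} once for each operator in the composition. Each of \(P^{2\alpha}\), \(\log P\) and \(P\) is diagonalized by the same \(Q\), so every Fr\'echet derivative in \eqref{eq:def_Lalpha_opt} acts on \(Q^\top(\cdot)Q\) as a Hadamard multiplier. The left-hand side of \eqref{eq:def_Lalpha_opt} is therefore a Hadamard multiplier applied to \(\widetilde H\), and solving for \(\widetilde H\) is just entrywise division.

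\emph{Step 1: the Sylvester term.} Since \(P^{2\alpha}=Q\Lambda^{2\alpha}Q^\top\), we get
\[
Q^\top\bigl(HP^{2\alpha}+P^{2\alpha}H\bigr)Q=\widetilde H\Lambda^{2\alpha}+\Lambda^{2\alpha}\widetilde H ,
\]
whose \((i,j)\) entry is \((\lambda_i^{2\alpha}+\lambda_j^{2\alpha})\widetilde H_{ij}\).

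\emph{Step 2: \((D\log)(P^{2\alpha})\).} The matrix \(P^{2\alpha}\) has eigenvalues \(\mu_i=\lambda_i^{2\alpha}\) and the same eigenvectors. Lemma~\ref{lem:DaleckiiKrein} with \(f=\log\) gives the multiplier
\[
\frac{\log\mu_i-\log\mu_j}{\mu_i-\mu_j}=\frac{2\alpha(\log\lambda_i-\log\lambda_j)}{\lambda_i^{2\alpha}-\lambda_j^{2\alpha}}.
\]
Here \(\alpha\neq0\) is needed: it guarantees that \(\mu_i\neq\mu_j\) exactly when \(\lambda_i\neq\lambda_j\). The diagonal entries (and entries with \(\lambda_i=\lambda_j\)) get \(1/\mu_i=\lambda_i^{-2\alpha}\).

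\emph{Step 3: \((D\exp)(\log P)\).} The matrix \(\log P\) has eigenvalues \(\log\lambda_i\), so Lemma~\ref{lem:DaleckiiKrein} with \(f=\exp\) gives the multiplier \((\lambda_i-\lambda_j)/(\log\lambda_i-\log\lambda_j)\), or \(\lambda_i\) in the equal case. Lemma~\ref{lem:DaleckiiKrein} is stated for an SPD argument, but its proof only uses a symmetric eigendecomposition, so it applies to the symmetric matrix \(\log P\) as well.

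\emph{Step 4: multiply and solve.} Multiplying the three factors, the logarithm differences cancel. For \(\lambda_i\neq\lambda_j\) the total multiplier is
\[
\frac{2\alpha(\lambda_i-\lambda_j)(\lambda_i^{2\alpha}+\lambda_j^{2\alpha})}{\lambda_i^{2\alpha}-\lambda_j^{2\alpha}},
\]
and on the diagonal it is \(\lambda_i\cdot\lambda_i^{-2\alpha}\cdot 2\lambda_i^{2\alpha}=2\lambda_i\). Equating with \(\widetilde Y_{ij}\) and dividing, which is legitimate because every multiplier is nonzero, gives \(\widetilde H_{ij}=c^{(\alpha)}_{ij}\widetilde Y_{ij}\) with the constants in \eqref{eq:cij_alpha_opt}. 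This also proves uniqueness: the linear map \(H\mapsto Y\) is, in the eigenbasis, a Hadamard multiplication by nonzero entries, so it is invertible on \(\Sym(n)\). Symmetry of \(\widetilde H\) follows because \(c_{ij}=c_{ji}\).

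\emph{Step 5: the limit.} Write \(\lambda_i^{2\alpha}-\lambda_j^{2\alpha}\approx 2\alpha\lambda_i^{2\alpha-1}(\lambda_i-\lambda_j)\) by the mean value theorem, and note \(\lambda_i^{2\alpha}+\lambda_j^{2\alpha}\to2\lambda_i^{2\alpha}\). The limit is then \(2\alpha\lambda_i^{2\alpha-1}/(2\alpha\cdot2\lambda_i^{2\alpha})=1/(4\lambda_i)\).

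This limit differs from \(c_{ii}=1/(2\lambda_i)\). The discrepancy comes from the factor \(2\) on the diagonal of Step 1 versus the off-diagonal structure, so the statement has to be read as giving \(c_{ij}\) for \(i\neq j\) only when \(\lambda_i\neq\lambda_j\). I expect the main obstacle to be handling repeated eigenvalues, i.e.\ \(i\neq j\) with \(\lambda_i=\lambda_j\). In that case the equal-eigenvalue branches of Lemma~\ref{lem:DaleckiiKrein} give the multiplier \(2\lambda_i\), so \(c_{ij}=1/(2\lambda_i)\), not the limit \(1/(4\lambda_i)\). I would state this explicitly, interpret the off-diagonal formula as applying to distinct eigenvalues, and present the limit as a separate computation rather than as a continuity claim.
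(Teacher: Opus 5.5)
Steps 1--4 are the paper's argument. Both conjugate by $Q$, apply the Dalecki\u{\i}--Kre\u{\i}n formula to $\log$ at $\Lambda^{2\alpha}$ and to $\exp$ at $\log\Lambda$, multiply the three Hadamard multipliers, and divide entrywise. Several of your side remarks are correct and make explicit what the paper leaves implicit:
\begin{itemize}
\item $\alpha\neq 0$ makes $\lambda_i^{2\alpha}=\lambda_j^{2\alpha}$ equivalent to $\lambda_i=\lambda_j$;
\item all multipliers are nonzero, so the map is invertible on $\Sym(n)$;
\item for $i\neq j$ with $\lambda_i=\lambda_j$ one must use the equal-argument branch, which gives $c_{ij}^{(\alpha)}=1/(2\lambda_i)$.
\end{itemize}

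The error is in Step 5. Your own quotient evaluates to
\[
\frac{2\alpha\lambda_i^{2\alpha-1}}{2\alpha\cdot 2\lambda_i^{2\alpha}}=\frac{1}{2\lambda_i},
\]
not $1/(4\lambda_i)$. So the ``Moreover'' clause as written, with $1/(4\lambda_i)$, is false and cannot be proved. A quick check is $\alpha=1/2$. There $c^{(1/2)}_{ij}=1/(\lambda_i+\lambda_j)$, the Lyapunov solution of $HP+PH=Y$, and this tends to $1/(2\lambda_i)$. The paper's proof has the same slip: the two limits it lists, $2\alpha\lambda_i^{2\alpha-1}$ and $2\lambda_i^{2\alpha}$, combine to $1/(2\lambda_i)$. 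Its own $\alpha=0$ theorem also gets $1/(2\lambda_i)$.

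Consequently the ``discrepancy'' you describe, and the repeated-eigenvalue obstacle you anticipate, do not exist. The off-diagonal formula extends continuously to $1/(2\lambda_i)$. That value equals $c_{ii}^{(\alpha)}$ and also the value you correctly derived for $i\neq j$ with $\lambda_i=\lambda_j$. No restricted reading of the statement is needed.

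This value is in fact forced. When $\lambda_i=\lambda_j$, the eigenbasis can be rotated within that eigenspace, and the multiplier $\widetilde Y\mapsto c\odot\widetilde Y$ must commute with such rotations. That requires $c_{ij}=c_{ii}=c_{jj}$. It is also what the metric-weight theorem needs: $w^{(\alpha)}_{ij}\to 2\lambda_i^{2\alpha-2}=2w^{(\alpha)}_{ii}$, i.e.\ $m^{(\alpha)}_{ij}\to m^{(\alpha)}_{ii}$, which holds with $1/(2\lambda_i)$ and fails with $1/(4\lambda_i)$.

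So do not present the limit ``as a separate computation'' that preserves $1/(4\lambda_i)$. Correct it to $1/(2\lambda_i)$, and state that $c^{(\alpha)}_{ij}$ is continuous across coincident eigenvalues.
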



\begin{proof}

We start with diagonalizing $P$ as
\[
P=Q\Lambda Q^\top,
\qquad
\Lambda=\mathrm{diag}(\lambda_1,\dots,\lambda_n),\qquad Q\in \mathrm{O}(n),
\]
where $\mathrm{O}(n)$ is the $n\times n$ orthogonal matrices. Along with the decomposition, we set
\[
\widetilde H:=Q^\top H Q,
\qquad
\text{and}
\qquad
\widetilde Y:=Q^\top Y Q.
\]
If $f$ is a classical matrix function,
that is,
the scalar counterpart of \(f\) is continuously differentiable on the open interval $(\min \lambda_{i},\max \lambda_{i})$ with $\min \lambda_{i} >0$,
it satisfies the orthogonal equivariance property
\[
f(QXQ^\top)=Q\,f(X)\,Q^\top
\]
for every orthogonal matrix \(Q\) and every symmetric matrix \(X\) in the domain of \(f\). Consequently, its Fr\'echet derivative obeys the orthogonal equivariance property
\begin{equation}
\label{eq:frechet_orthogonal_equivariance}
(Df)(P)[E]
=
Q\,(Df)(\Lambda)[\widetilde E]\,Q^\top,
\qquad
\widetilde E:=Q^\top E Q.
\end{equation}
Indeed, since
$
P+tE
=
Q(\Lambda+t\widetilde E)Q^\top,
$
we have
\[
f(P+tE)
=
f\!\bigl(Q(\Lambda+t\widetilde E)Q^\top\bigr)
=
Q\,f(\Lambda+t\widetilde E)\,Q^\top,
\]
and hence we obtain
\[
\begin{aligned}
(Df)(P)[E]
&=
\lim_{t\to0}\frac{f(P+tE)-f(P)}{t}\\
&=
Q\left(\lim_{t\to0}\frac{f(\Lambda+t\widetilde E)-f(\Lambda)}{t}\right)Q^\top\\
&=
Q\,(Df)(\Lambda)[\widetilde E]\,Q^\top.
\end{aligned}
\]

We now apply \eqref{eq:frechet_orthogonal_equivariance} to the matrix \(\exp\) and $\log$
to diagonalize \eqref{eq:def_Lalpha_opt}. Since
$
P^{2\alpha}=Q\Lambda^{2\alpha}Q^\top,
$
it follows that
\[
HP^{2\alpha}+P^{2\alpha}H
=
Q\bigl(\widetilde H\Lambda^{2\alpha}+\Lambda^{2\alpha}\widetilde H\bigr)Q^\top.
\]
Applying the orthogonal equivariance property \eqref{eq:frechet_orthogonal_equivariance} of \((D\log)\), we obtain
\[
(D\log)(P^{2\alpha})
\bigl[HP^{2\alpha}+P^{2\alpha}H\bigr]
=
Q\,(D\log)(\Lambda^{2\alpha})
\bigl[\widetilde H\Lambda^{2\alpha}+\Lambda^{2\alpha}\widetilde H\bigr]Q^\top.
\]
Since
$
\log P = Q(\log\Lambda)Q^\top,
$
applying the same property to \((D\exp)\) yields
\[
\begin{aligned}
&(D\exp)(\log P)\circ (D\log)(P^{2\alpha})
\bigl[HP^{2\alpha}+P^{2\alpha}H\bigr]\\
&\qquad=
Q\Bigl((D\exp)(\log \Lambda)\circ (D\log)(\Lambda^{2\alpha})\Bigr)
\bigl[\widetilde H\Lambda^{2\alpha}+\Lambda^{2\alpha}\widetilde H\bigr]Q^\top.
\end{aligned}
\]
Therefore, Equation \eqref{eq:def_Lalpha_opt},
\[
(D\exp)(\log P)\circ (D\log)(P^{2\alpha})
\bigl[HP^{2\alpha}+P^{2\alpha}H\bigr]
=
Y,
\]
is equivalent to
\begin{equation}
\label{eq:def_Lalpha_diag}
(D\exp)(\log \Lambda)\circ (D\log)(\Lambda^{2\alpha})
\bigl[\widetilde H\Lambda^{2\alpha}+\Lambda^{2\alpha}\widetilde H\bigr]
=
\widetilde Y.
\end{equation}

We proceed to get the entrywise expression of $\widetilde Y$.
By Lemma~\ref{lem:DaleckiiKrein}, for a classical matrix function $f$
and the diagonal matrix $\Lambda=\mathrm{diag}(\lambda_1,\dots,\lambda_n)$,
the Fr\'echet derivative satisfies
\begin{equation}
\label{eq:frechet_divdiff}
\big((Df)(\Lambda)[E]\big)_{ij}
=
f^{[1]}(\lambda_i,\lambda_j)\,E_{ij},
\end{equation}
where the divided difference $f^{[1]}$ is defined by
\[
f^{[1]}(a,b)=
\begin{cases}
\dfrac{f(a)-f(b)}{a-b}, & a\neq b,\\[6pt]
f'(a), & a=b.
\end{cases}
\]
Let $S:=\widetilde H\Lambda^{2\alpha}+\Lambda^{2\alpha}\widetilde H$. Since $\Lambda^{2\alpha}$ is diagonal, we have
\begin{equation}
\label{eq:S_entry}
S_{ij}=(\lambda_i^{2\alpha}+\lambda_j^{2\alpha})\,\widetilde H_{ij}.
\end{equation}
Define $T:=D\log(\Lambda^{2\alpha})[S]$. Then by \eqref{eq:frechet_divdiff}, we get the representation
\begin{equation}
\label{eq:T_entry}
T_{ij}=\log^{[1]}(\lambda_i^{2\alpha},\lambda_j^{2\alpha})\,S_{ij}.
\end{equation}
Applying \eqref{eq:frechet_divdiff} to $D\exp(\log\Lambda)$ yields
\begin{equation}
\label{eq:Y_entry_general}
\widetilde Y_{ij}=\exp^{[1]}(\log\lambda_i,\log\lambda_j)\,T_{ij}.
\end{equation}
Combining \eqref{eq:S_entry}-\eqref{eq:Y_entry_general} gives
\begin{equation}
\label{eq:combined_entry}
\widetilde Y_{ij}
=\exp^{[1]}(\log\lambda_i,\log\lambda_j)\,
\log^{[1]}(\lambda_i^{2\alpha},\lambda_j^{2\alpha})\,
(\lambda_i^{2\alpha}+\lambda_j^{2\alpha})\,\widetilde H_{ij}.
\end{equation}

Finally, we express $\exp^{[1]}$ and $\log^{[1]}$ to obtain the closed form of $\widetilde Y$.
If $i=j$, then $\exp'(\log\lambda_i)=\lambda_i$ and $\log'(\lambda_i^{2\alpha})=1/\lambda_i^{2\alpha}$, hence we get
\[\widetilde Y_{ii}=2\lambda_{i}\,\widetilde H_{ii},\] 
which gives $c_{ii}^{(\alpha)}=1/(2\lambda_i)$.
If $i\neq j$, we compute
\[
\exp^{[1]}(\log\lambda_i,\log\lambda_j)=\frac{\lambda_i-\lambda_j}{\log\lambda_i-\log\lambda_j},\qquad
\log^{[1]}(\lambda_i^{2\alpha},\lambda_j^{2\alpha})
=\frac{2\alpha(\log\lambda_i-\log\lambda_j)}{\lambda_i^{2\alpha}-\lambda_j^{2\alpha}},
\]
so their product equals ${2\alpha(\lambda_i-\lambda_j)}/{(\lambda_i^{2\alpha}-\lambda_j^{2\alpha})}$:
\[
\widetilde Y_{ij} =\frac{2\alpha(\lambda_i-\lambda_j)(\lambda_i^{2\alpha}+\lambda_j^{2\alpha})}{(\lambda_i^{2\alpha}-\lambda_j^{2\alpha})} \widetilde H_{ij}.
\]
Solving \eqref{eq:combined_entry} for $\widetilde H_{ij}$ gives \eqref{eq:cij_alpha_opt}.
Note the coincident-eigenvalue limit follows from
\[
\lim_{\lambda_j \to \lambda_i}
\frac{\lambda_i^{2\alpha}-\lambda_j^{2\alpha}}{\lambda_i-\lambda_j}
=
2\alpha \lambda_i^{2\alpha-1},
\qquad
\lim_{\lambda_j \to \lambda_i}
(\lambda_i^{2\alpha}+\lambda_j^{2\alpha})
=
2\lambda_i^{2\alpha},
\]
which completes the proof.
\end{proof}

Moreover, we also derive the entrywise expression of the operator $\mathcal L_{P,\alpha}$ for the Log-Euclidean metric, namely, in the case $\alpha=0$.

\begin{theorem}[Entrywise closed-form of $\mathcal L_{P,0}$ in the eigenbasis (Log-Euclidean case)]
\label{thm:L0_entrywise}
Let $P\in\SPD(n)$ admit an eigendecomposition $P=Q\Lambda Q^\top$ with
$\Lambda=\mathrm{diag}(\lambda_1,\dots,\lambda_n)$ and $\lambda_i>0$.
Define $\widetilde Y:=Q^\top YQ$ for each $Y\in\Sym(n)$.
In the limiting case $\alpha=0$, define $H=\mathcal L_{P,0}(Y)\in\Sym(n)$ by
\begin{equation}
\label{eq:def_L0_opt}
(D\exp)(\log P)\,[\,2H\,]=Y,
\qquad\text{equivalently}\qquad
H=\frac12\,(D\log)(P)[Y].
\end{equation}
Then the solution in the eigenbasis of $P$ is given by
\begin{equation}
\label{eq:L0_entrywise_opt}
\widetilde H_{ij}=c_{ij}^{(0)},
\end{equation}
where
\begin{equation}
\label{eq:cij_0_opt}
c_{ii}^{(0)}=\frac{1}{2\lambda_i},\qquad
c_{ij}^{(0)}=
\frac{\log\lambda_i-\log\lambda_j}{2(\lambda_i-\lambda_j)}
\quad(i\neq j).
\end{equation}
Moreover, we have
\[
\lim_{\lambda_j\to\lambda_i}
\frac{\log\lambda_i-\log\lambda_j}{2(\lambda_i-\lambda_j)}
=
\frac{1}{2\lambda_i}.
\]
\end{theorem}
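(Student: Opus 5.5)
The plan is to mirror the proof of Theorem~\ref{thm:Lalpha_entrywise}, but to work from the simpler second form in \eqref{eq:def_L0_opt}. We read \eqref{eq:L0_entrywise_opt} as \(\widetilde H_{ij}=c^{(0)}_{ij}\,\widetilde Y_{ij}\), in analogy with \eqref{eq:Lalpha_entrywise_opt}; the factor \(\widetilde Y_{ij}\) is evidently intended there.

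\textbf{Step 1: the two forms of \eqref{eq:def_L0_opt} agree.} Since \(\exp\circ\log=\mathrm{id}\) on \(\SPD(n)\), the chain rule gives
\[
(D\exp)(\log P)\circ(D\log)(P)=\mathrm{id}_{\Sym(n)} .
\]
Since \(\Sym(n)\) is finite-dimensional, \((D\exp)(\log P)\) is invertible on \(\Sym(n)\) with inverse \((D\log)(P)\). Hence \((D\exp)(\log P)[2H]=Y\) holds if and only if \(H=\tfrac12(D\log)(P)[Y]\). This also shows that \(H\) exists, is unique, and is symmetric.

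\textbf{Step 2: move to the eigenbasis.} By the orthogonal equivariance \eqref{eq:frechet_orthogonal_equivariance} established in the proof of Theorem~\ref{thm:Lalpha_entrywise}, we have \((D\log)(P)[Y]=Q\,(D\log)(\Lambda)[\widetilde Y]\,Q^\top\). Therefore
\[
\widetilde H=\tfrac12\,(D\log)(\Lambda)[\widetilde Y].
\]

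\textbf{Step 3: apply Lemma~\ref{lem:DaleckiiKrein}.} Take \(f=\log\), which is \(C^1\) on \((0,\infty)\supset\sigma(P)\). Equivalently, use \eqref{eq:frechet_divdiff}, which gives \(\widetilde H_{ij}=\tfrac12\log^{[1]}(\lambda_i,\lambda_j)\,\widetilde Y_{ij}\). Reading off the divided difference:
\begin{itemize}
\item for \(\lambda_i\neq\lambda_j\), \(\log^{[1]}(\lambda_i,\lambda_j)=(\log\lambda_i-\log\lambda_j)/(\lambda_i-\lambda_j)\), which gives \(c^{(0)}_{ij}\);
\item for \(i=j\), \(\log'(\lambda_i)=1/\lambda_i\), which gives \(c^{(0)}_{ii}=1/(2\lambda_i)\).
\end{itemize}
If \(i\neq j\) but \(\lambda_i=\lambda_j\), the lemma's convention \(f'(\lambda_i)\) applies, which is consistent with the limit below.

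\textbf{Step 4: the limit.} The limit statement is the difference quotient of \(\log\) at \(\lambda_i\), so it tends to \(\tfrac12\cdot\tfrac1{\lambda_i}\). This shows that the off-diagonal formula extends continuously to the diagonal value.

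There is no real obstacle here. The only point requiring care is Step~1, namely justifying that \((D\exp)(\log P)\) is invertible with inverse \((D\log)(P)\). As a cross-check, one can instead apply Lemma~\ref{lem:DaleckiiKrein} directly to \(\exp\): this gives entries \((\lambda_i-\lambda_j)/(\log\lambda_i-\log\lambda_j)\) on the left side, which are nonzero, and dividing by them yields the same \(c^{(0)}_{ij}\).
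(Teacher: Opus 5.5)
Your proposal is correct and follows essentially the same route as the paper: invert $(D\exp)(\log P)$ to get $H=\tfrac12(D\log)(P)[Y]$, pass to the eigenbasis by orthogonal equivariance, and read off the factor $\tfrac12\log^{[1]}(\lambda_i,\lambda_j)$ from the Dalecki\u{\i}--Kre\u{\i}n formula. Your reading of the entrywise formula as $\widetilde H_{ij}=c^{(0)}_{ij}\widetilde Y_{ij}$ matches what the paper's proof actually derives, and the paper leaves implicit two points you address: the chain-rule justification of invertibility, and the case $i\neq j$ with $\lambda_i=\lambda_j$.
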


\begin{proof}

First, consider the inverse form in \eqref{eq:def_L0_opt}.
Since \((D\exp)(\log P)\) is invertible with inverse \((D\log)(P)\), Equation
\eqref{eq:def_L0_opt} implies
\[
H=\frac12\,(D\log)(P)[Y].
\]
Consider the diagonalization of $P$:
\[
P=Q\Lambda Q^\top,
\qquad
\Lambda=\mathrm{diag}(\lambda_1,\dots,\lambda_n),\qquad Q\in O(n),
\]
where $O(n)$ is the $n\times n$ orthogonal matrices. Along with the decomposition, we set
\[
\widetilde H:=Q^\top H Q,
\qquad
\text{and}
\qquad
\widetilde Y:=Q^\top Y Q.
\]

The orthogonal equivariance property \eqref{eq:frechet_orthogonal_equivariance} of the Fr\'{e}chet derivative gives
\[
H=\frac12\,(D\log)(P)[Y]
\quad\Longleftrightarrow\quad
\widetilde H=\frac12\,(D\log)(\Lambda)[\widetilde Y].
\] By Lemma~\ref{lem:DaleckiiKrein}, the Fr\'echet
derivative of $\log$ satisfies
\begin{equation}
\label{eq:Dlog_entry_L0}
\big((D\log)(\Lambda)[\widetilde Y]\big)_{ij}
=
\log^{[1]}(\lambda_i,\lambda_j)\,\widetilde Y_{ij},
\end{equation}
where
\[
\log^{[1]}(a,b)=
\begin{cases}
\dfrac{\log a-\log b}{a-b}, & a\neq b,\\[6pt]
\dfrac{1}{a}, & a=b.
\end{cases}
\]
Therefore, we get
\begin{equation}
\label{eq:H_entry_L0}
\widetilde H_{ij}
=
\frac12\,\log^{[1]}(\lambda_i,\lambda_j)\,\widetilde Y_{ij}.
\end{equation}

We finally consider the explicit representation of $\log^{[1]}$ to obtain the closed form of \eqref{eq:H_entry_L0}.
If $i=j$, then we have
\[
\widetilde H_{ii}
=
\frac{1}{2\lambda_i}\,\widetilde Y_{ii},
\]
implying that $c_{ii}^{(0)}=1/(2\lambda_i)$.
If $i\neq j$, then we have
\[
\widetilde H_{ij}
=
\frac{\log\lambda_i-\log\lambda_j}{2(\lambda_i-\lambda_j)}\,\widetilde Y_{ij},
\]
which gives \eqref{eq:cij_0_opt}. Finally, the coincident-eigenvalue limit is
\[
\lim_{\lambda_j\to\lambda_i}
\frac{\log\lambda_i-\log\lambda_j}{2(\lambda_i-\lambda_j)}
=
\frac12\lim_{\lambda_j\to\lambda_i}
\frac{\log\lambda_i-\log\lambda_j}{\lambda_i-\lambda_j}
=
\frac{1}{2\lambda_i},
\]
which completes the proof.
\end{proof}

\subsection{Coordinate representation of the Riemannian metric}
Based on the Riesz representation theorem, we introduce the metric operator to express the AP metric in terms of the Frobenius inner product.

\begin{definition}[Metric operator $M_\alpha(P)$]
\label{def:Malpha}
Define $M_\alpha(P):\Sym(n)\to\Sym(n)$ as the unique self-adjoint positive-definite linear operator
(with respect to $\ipF{\cdot}{\cdot}$) such that
\begin{equation}
\label{eq:Malpha_def_opt}
g^{(\alpha)}_P(Y,Z)=\ipF{Y}{M_\alpha(P)\,Z}\qquad\text{for all }Y,Z\in\Sym(n).
\end{equation}
\end{definition}

We begin with characterizing the spectral structure of the metric operator \(M_\alpha(P)\) by deriving its eigenvalues with respect to the eigenbasis of \(P\).
Let $\{e_i\}_{i=1}^n$ be the standard basis of $\mathbb R^n$ and define
$E^{(ij)}:=e_ie_j^\top$.
An orthonormal basis of $\Sym(n)$ under the Frobenius inner product
$\ipF{A}{B}:=\tr(A^\top B)$ is given by
\[
\mathbf{E}^{(ii)}:=E^{(ii)},\qquad
\mathbf{E}^{(ij)}:=\frac{1}{\sqrt2}\bigl(E^{(ij)}+E^{(ji)}\bigr)\quad (1\le i<j\le n).
\]
Moreover, for any orthogonal matrix $Q\in\mathbb R^{n\times n}$, the rotated family
\[
\widehat{\mathbf{E}}^{(ii)}:=Q \mathbf{E}^{(ii)}Q^\top,\qquad
\widehat{\mathbf{E}}^{(ij)}:=Q \mathbf{E}^{(ij)}Q^\top\quad(1\le i<j\le n)
\]
remains Frobenius-orthonormal.

\begin{theorem}[Spectrum of the metric operator $M_\alpha(P)$ for $\alpha\neq 0$]
\label{thm:metric_weights_Malpha}
Let $P=Q\Lambda Q^\top\in\SPD(n)$ with $\Lambda=\mathrm{diag}(\lambda_1,\dots,\lambda_n)$ and $\lambda_i>0$.
For $Y,Z\in\Sym(n)$, using $\widetilde Y:=Q^\top YQ$ and $\widetilde Z:=Q^\top ZQ$, we have
\begin{equation}
\label{eq:metric_weights_tilde_opt}
g^{(\alpha)}_P(Y,Z)
=\sum_{i=1}^n w_{ii}^{(\alpha)}\,\widetilde Y_{ii}\widetilde Z_{ii}
+\sum_{1\le i<j\le n} \,w_{ij}^{(\alpha)}\,\widetilde Y_{ij}\widetilde Z_{ij},
\end{equation}
where
\begin{equation}
\label{eq:wii_wij_opt}
w_{ii}^{(\alpha)}=\lambda_i^{2\alpha-2},\qquad
w_{ij}^{(\alpha)}=
\frac{(\lambda_i^{2\alpha}-\lambda_j^{2\alpha})^2}
{\alpha^2(\lambda_i-\lambda_j)^2(\lambda_i^{2\alpha}+\lambda_j^{2\alpha})}
\quad(i\neq j), \quad \alpha \neq 0.
\end{equation}
Equivalently, the Frobenius-orthonormal family
$\{\widehat{\mathbf E}^{(ii)},\widehat{\mathbf E}^{(ij)}\}$
forms an eigenbasis of the metric operator $M_\alpha(P)$:
\[
M_\alpha(P)\widehat{\mathbf E}^{(ii)}
=
w_{ii}^{(\alpha)}\,\widehat{\mathbf E}^{(ii)},
\qquad
M_\alpha(P)\widehat{\mathbf E}^{(ij)}
=
\frac{1}{2}w_{ij}^{(\alpha)}\,\widehat{\mathbf E}^{(ij)}.
\]
\end{theorem}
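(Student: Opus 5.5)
Substituting the entrywise form of $\mathcal L_{P,\alpha}$ from Theorem~\ref{thm:Lalpha_entrywise} directly into the definition \eqref{eq:riemannian_metric} of $g^{(\alpha)}_P$ should give the result. First I rotate everything into the eigenbasis of $P$. Write $H=\mathcal L_{P,\alpha}(Y)$ and $K=\mathcal L_{P,\alpha}(Z)$, with $\widetilde H=Q^\top HQ$ and $\widetilde K=Q^\top KQ$. Since $P^{2\alpha}=Q\Lambda^{2\alpha}Q^\top$ and the trace is invariant under conjugation,
\[
g^{(\alpha)}_P(Y,Z)=4\,\tr\bigl(\widetilde H\Lambda^{2\alpha}\widetilde K\bigr)
=4\sum_{i,j}\widetilde H_{ij}\,\lambda_j^{2\alpha}\,\widetilde K_{ji}.
\]
Symmetry gives $\widetilde K_{ji}=\widetilde K_{ij}$, and Theorem~\ref{thm:Lalpha_entrywise} gives $\widetilde H_{ij}=c^{(\alpha)}_{ij}\widetilde Y_{ij}$ and $\widetilde K_{ij}=c^{(\alpha)}_{ij}\widetilde Z_{ij}$. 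Hence
\[
g^{(\alpha)}_P(Y,Z)=4\sum_{i,j}\bigl(c^{(\alpha)}_{ij}\bigr)^2\lambda_j^{2\alpha}\,\widetilde Y_{ij}\widetilde Z_{ij}.
\]

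Next I split this sum into diagonal and off-diagonal parts.

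\emph{Diagonal terms.} Here $4\cdot\frac{1}{4\lambda_i^2}\lambda_i^{2\alpha}=\lambda_i^{2\alpha-2}$, which is $w^{(\alpha)}_{ii}$.

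\emph{Off-diagonal terms.} The coefficient $c^{(\alpha)}_{ij}$ is symmetric in $(i,j)$, and so is $\widetilde Y_{ij}\widetilde Z_{ij}$. I therefore pair the $(i,j)$ and $(j,i)$ terms for $i<j$, which gives the weight
\[
4\bigl(c^{(\alpha)}_{ij}\bigr)^2(\lambda_i^{2\alpha}+\lambda_j^{2\alpha})
=\frac{(\lambda_i^{2\alpha}-\lambda_j^{2\alpha})^2}{\alpha^2(\lambda_i-\lambda_j)^2(\lambda_i^{2\alpha}+\lambda_j^{2\alpha})},
\]
using the formula \eqref{eq:cij_alpha_opt}. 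This is exactly $w^{(\alpha)}_{ij}$.

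\emph{Coincident eigenvalues.} When $\lambda_i=\lambda_j$ with $i\ne j$, I interpret $c^{(\alpha)}_{ij}$ through its limit $1/(4\lambda_i)$, which is the value the divided-difference formula actually produces. The weight then becomes $4\cdot\frac{1}{16\lambda_i^2}\cdot 2\lambda_i^{2\alpha}$, which agrees with the continuous extension of $w^{(\alpha)}_{ij}$. This step needs the most care: the diagonal coefficient $1/(2\lambda_i)$ and the coincident off-diagonal limit $1/(4\lambda_i)$ differ, and the pairing argument must be applied only to $i\neq j$.

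Finally, I read off the eigenbasis statement. For $Y=\widehat{\mathbf E}^{(ij)}$ with $i<j$, we have $\widetilde Y=\mathbf E^{(ij)}$, whose only nonzero entries are $\widetilde Y_{ij}=\widetilde Y_{ji}=1/\sqrt2$. By \eqref{eq:metric_weights_tilde_opt}, $g^{(\alpha)}_P(\widehat{\mathbf E}^{(ij)},Z)=w^{(\alpha)}_{ij}\widetilde Z_{ij}/\sqrt2$. On the other hand, $\ipF{\widehat{\mathbf E}^{(ij)}}{Z}=\sqrt2\,\widetilde Z_{ij}$. Hence
\[
g^{(\alpha)}_P(\widehat{\mathbf E}^{(ij)},Z)=\tfrac12 w^{(\alpha)}_{ij}\ipF{\widehat{\mathbf E}^{(ij)}}{Z}
\quad\text{for all } Z.
\]
The same computation for diagonal basis elements gives $g^{(\alpha)}_P(\widehat{\mathbf E}^{(ii)},Z)=w^{(\alpha)}_{ii}\ipF{\widehat{\mathbf E}^{(ii)}}{Z}$. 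By Definition~\ref{def:Malpha}, uniqueness of the Riesz representative, and self-adjointness of $M_\alpha(P)$, this yields $M_\alpha(P)\widehat{\mathbf E}^{(ii)}=w^{(\alpha)}_{ii}\widehat{\mathbf E}^{(ii)}$ and $M_\alpha(P)\widehat{\mathbf E}^{(ij)}=\tfrac12 w^{(\alpha)}_{ij}\widehat{\mathbf E}^{(ij)}$. The factor $\tfrac12$ comes precisely from the $1/\sqrt2$ normalization combined with summing only over $i<j$.
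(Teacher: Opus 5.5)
Your main argument is the paper's own proof, and it is correct for distinct eigenvalues. You rotate into the eigenbasis, obtain $g^{(\alpha)}_P(Y,Z)=4\sum_{i,j}\lambda_j^{2\alpha}\bigl(c^{(\alpha)}_{ij}\bigr)^2\widetilde Y_{ij}\widetilde Z_{ij}$, pair $(i,j)$ with $(j,i)$, and read off the eigenvalues of $M_\alpha(P)$. Your explicit check that $\ipF{\widehat{\mathbf E}^{(ij)}}{Z}=\sqrt2\,\widetilde Z_{ij}$ accounts for the factor $\tfrac12$ more carefully than the paper's one-line comparison.

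Your coincident-eigenvalue paragraph, however, is wrong. The paper does not treat that case at all; its formula for $w^{(\alpha)}_{ij}$ is only literally defined when $\lambda_i\neq\lambda_j$. So addressing it is a good instinct, but the values you use are incorrect.

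\begin{itemize}
\item \emph{Divided differences.} If $\lambda_i=\lambda_j$ with $i\neq j$, the Dalecki\u{\i}--Kre\u{\i}n divided differences become derivatives. The factor multiplying $\widetilde H_{ij}$ is then $\exp'(\log\lambda_i)\cdot\log'(\lambda_i^{2\alpha})\cdot 2\lambda_i^{2\alpha}=\lambda_i\cdot\lambda_i^{-2\alpha}\cdot 2\lambda_i^{2\alpha}=2\lambda_i$. Hence $c^{(\alpha)}_{ij}=1/(2\lambda_i)$, exactly the diagonal value, not $1/(4\lambda_i)$.
\item \emph{Limit.} The limit of the closed form is likewise $2\alpha\lambda_i^{2\alpha-1}/(2\alpha\cdot 2\lambda_i^{2\alpha})=1/(2\lambda_i)$. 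The value $1/(4\lambda_i)$ printed in Theorem~\ref{thm:Lalpha_entrywise} is a slip, and you should not have taken it on trust.
\item \emph{Your own arithmetic.} You get $4\cdot\frac{1}{16\lambda_i^2}\cdot 2\lambda_i^{2\alpha}=\tfrac12\lambda_i^{2\alpha-2}$. But $\lim_{\lambda_j\to\lambda_i}w^{(\alpha)}_{ij}=(2\alpha\lambda_i^{2\alpha-1})^2/(2\alpha^2\lambda_i^{2\alpha})=2\lambda_i^{2\alpha-2}$, so the agreement you assert is false.
\end{itemize}

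A quick sanity check is $P=\lambda I$. There $HP^{2\alpha}+P^{2\alpha}H=2\lambda^{2\alpha}H$, and the two Fr\'echet derivatives are multiplication by $\lambda^{-2\alpha}$ and by $\lambda$. So $\mathcal L_{P,\alpha}(Y)=Y/(2\lambda)$ and $M_\alpha(\lambda I)=\lambda^{2\alpha-2}\,\mathrm{Id}$. Your value would instead give eigenvalue $\tfrac14\lambda^{2\alpha-2}$ on the off-diagonal directions. That cannot be right, because the eigenbasis $Q$ is arbitrary when $P=\lambda I$.

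With the correct $c^{(\alpha)}_{ij}=1/(2\lambda_i)$, the weight is $4\cdot\frac{1}{4\lambda_i^2}\cdot 2\lambda_i^{2\alpha}=2\lambda_i^{2\alpha-2}$, which does equal the continuous extension of $w^{(\alpha)}_{ij}$. Replace that paragraph accordingly, and drop the remark that the diagonal and coincident off-diagonal coefficients differ: they coincide.
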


\begin{proof}

We start with the eigendecomposition $P=Q\Lambda Q^\top$ and define
\[
\widetilde H_Y:=Q^\top \mathcal L_{P,\alpha}(Y)Q,
\qquad
\text{and}\qquad
\widetilde H_Z:=Q^\top \mathcal L_{P,\alpha}(Z)Q.
\]
Since $P^{2\alpha}=Q\Lambda^{2\alpha}Q^\top$ and the trace is invariant under
orthogonal similarity transformations, we obtain
\[
g^{(\alpha)}_P(Y,Z)
=
4\,\tr\!\left(\widetilde H_Y\,\Lambda^{2\alpha}\,\widetilde H_Z\right).
\]

By Theorem~\ref{thm:Lalpha_entrywise}, the entries of $\widetilde H_Y$ and
$\widetilde H_Z$ satisfy
\[
(\widetilde H_Y)_{ij}=c_{ij}^{(\alpha)}\widetilde Y_{ij},
\qquad
(\widetilde H_Z)_{ij}=c_{ij}^{(\alpha)}\widetilde Z_{ij}.
\]
Substituting these expressions yields
\[
\tr(\widetilde H_Y\Lambda^{2\alpha}\widetilde H_Z)
=
\sum^{n}_{i,j=1}
\lambda_j^{2\alpha}
\big(c_{ij}^{(\alpha)}\big)^2
\widetilde Y_{ij}\widetilde Z_{ij}.
\]
Hence we obtain
\[
g^{(\alpha)}_P(Y,Z)
=
4\sum^{n}_{i,j=1}
\lambda_j^{2\alpha}
\big(c_{ij}^{(\alpha)}\big)^2
\widetilde Y_{ij}\widetilde Z_{ij}.
\]

Consider the summand above.
For a diagonal entry $i=j$, using $c_{ii}^{(\alpha)}=1/(2\lambda_i)$ gives
\[
4\lambda_i^{2\alpha}(c_{ii}^{(\alpha)})^2
=
\lambda_i^{2\alpha-2}
=
w_{ii}^{(\alpha)}.
\]
For an off-diagonal entry $i\neq j$, the pair $(i,j)$ and $(j,i)$ together contributes
\[
4(\lambda_i^{2\alpha}+\lambda_j^{2\alpha})
\big(c_{ij}^{(\alpha)}\big)^2
=
\frac{(\lambda_i^{2\alpha}-\lambda_j^{2\alpha})^2}
{\alpha^2(\lambda_i-\lambda_j)^2(\lambda_i^{2\alpha}+\lambda_j^{2\alpha})}
=
w_{ij}^{(\alpha)}.
\]
This, together with the symmetry of $\widetilde Y$ and $\widetilde Z$, establishes \eqref{eq:metric_weights_tilde_opt}-\eqref{eq:wii_wij_opt}.

Since $\{\widehat{\mathbf E}^{(ii)},\widehat{\mathbf E}^{(ij)}\}$
is a Frobenius-orthonormal basis of $\Sym(n)$, comparing
\eqref{eq:Malpha_def_opt} with \eqref{eq:metric_weights_tilde_opt}
yields
\[
M_\alpha(P)\widehat{\mathbf E}^{(ii)}
=
w_{ii}^{(\alpha)}\,\widehat{\mathbf E}^{(ii)},\qquad
M_\alpha(P)\widehat{\mathbf E}^{(ij)}
=
\frac{1}{2}w_{ij}^{(\alpha)}\,\widehat{\mathbf E}^{(ij)},
\]
which concludes the proof.
\end{proof}

We next consider the limiting case $\alpha=0$, which corresponds to the Log-Euclidean metric. The following result gives the corresponding spectral weights of the metric operator $M_0(P)$.

\begin{theorem}[Spectral weights of the metric operator $M_0(P)$ (Log-Euclidean case)]
\label{thm:metric_weights_M0}
Let $P=Q\Lambda Q^\top\in\SPD(n)$ with
$\Lambda=\mathrm{diag}(\lambda_1,\dots,\lambda_n)$ and $\lambda_i>0$.
For $Y,Z\in\Sym(n)$, define
\[
\widetilde Y:=Q^\top YQ,
\qquad
\widetilde Z:=Q^\top ZQ.
\]
In the limiting case $\alpha=0$, the inner product induced by
\[
\ipg{Y}{Z}{P}
:=
4\,\tr\!\left(\mathcal L_{P,0}(Y)\,\mathcal L_{P,0}(Z)\right),
\qquad Y,Z\in\Sym(n),
\]
admits the eigendecomposition
\begin{equation}
\label{eq:metric_weights_tilde_0_opt}
\ipg{Y}{Z}{P}
=
\sum_{i=1}^n w_{ii}^{(0)}\,\widetilde Y_{ii}\widetilde Z_{ii}
+
\sum_{1\le i<j\le n} \,w_{ij}^{(0)}\,\widetilde Y_{ij}\widetilde Z_{ij},
\end{equation}
where
\begin{equation}
\label{eq:wii_wij_0_opt}
w_{ii}^{(0)}=\lambda_i^{-2},
\qquad
w_{ij}^{(0)}
=
2\left(
\frac{\log\lambda_i-\log\lambda_j}{\lambda_i-\lambda_j}
\right)^2
\quad(i\neq j).
\end{equation}
Equivalently, the Frobenius-orthonormal family
$\{\widehat{\mathbf E}^{(ii)},\widehat{\mathbf E}^{(ij)}\}$
forms an eigenbasis of the metric operator $M_0(P)$:
\[
M_0(P)\widehat{\mathbf E}^{(ii)}
=
w_{ii}^{(0)}\,\widehat{\mathbf E}^{(ii)},
\qquad
M_0(P)\widehat{\mathbf E}^{(ij)}
=
\frac{1}{2}w_{ij}^{(0)}\,\widehat{\mathbf E}^{(ij)}.
\]
Moreover, we have
\[
\lim_{\lambda_j\to\lambda_i}
2\left(
\frac{\log\lambda_i-\log\lambda_j}{\lambda_i-\lambda_j}
\right)^2
=
2\lambda_i^{-2}.
\]
\end{theorem}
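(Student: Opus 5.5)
The plan mirrors the proof of Theorem~\ref{thm:metric_weights_Malpha}, with Theorem~\ref{thm:L0_entrywise} supplying the coefficients in place of Theorem~\ref{thm:Lalpha_entrywise}. Write \(P=Q\Lambda Q^\top\) and set \(\widetilde H_Y:=Q^\top\mathcal L_{P,0}(Y)Q\), and similarly \(\widetilde H_Z\). By orthogonal invariance of the trace,
\[
\ipg{Y}{Z}{P}=4\,\tr\bigl(\widetilde H_Y\widetilde H_Z\bigr).
\]
Here the middle factor is \(P^{0}=I\), consistent with setting \(\alpha=0\) in \eqref{eq:riemannian_metric}. Theorem~\ref{thm:L0_entrywise} gives \((\widetilde H_Y)_{ij}=c^{(0)}_{ij}\widetilde Y_{ij}\). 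The statement of that theorem literally writes \(\widetilde H_{ij}=c^{(0)}_{ij}\); the factor \(\widetilde Y_{ij}\) is omitted there, but its proof shows it is present. Using the symmetry of \(\widetilde H_Y\), this yields
\[
\ipg{Y}{Z}{P}=4\sum_{i,j=1}^n \bigl(c^{(0)}_{ij}\bigr)^2\,\widetilde Y_{ij}\widetilde Z_{ij}.
\]

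Next, split the sum into its diagonal and off-diagonal parts. For \(i=j\), we have \(4\,(1/(2\lambda_i))^2=\lambda_i^{-2}=w^{(0)}_{ii}\). For \(i\neq j\), the symmetric pair \((i,j)\) and \((j,i)\) together contributes
\[
8\bigl(c^{(0)}_{ij}\bigr)^2=8\cdot\frac{(\log\lambda_i-\log\lambda_j)^2}{4(\lambda_i-\lambda_j)^2}=w^{(0)}_{ij}.
\]
This uses \(c^{(0)}_{ij}=c^{(0)}_{ji}\) together with \(\widetilde Y_{ij}=\widetilde Y_{ji}\) and \(\widetilde Z_{ij}=\widetilde Z_{ji}\). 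Summing these contributions gives \eqref{eq:metric_weights_tilde_0_opt}--\eqref{eq:wii_wij_0_opt}.

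For the eigenbasis claim, expand \(Y\) and \(Z\) in the Frobenius-orthonormal rotated basis \(\{\widehat{\mathbf E}^{(ii)},\widehat{\mathbf E}^{(ij)}\}\). For \(Y=\widehat{\mathbf E}^{(ij)}\) with \(i<j\), the rotated matrix \(\widetilde Y\) has entries \(1/\sqrt2\) at positions \((i,j)\) and \((j,i)\). The quadratic form therefore evaluates to \(\tfrac12 w^{(0)}_{ij}\), and distinct basis elements are orthogonal under \(\ipg{\cdot}{\cdot}{P}\). The bilinear form is thus diagonal in this orthonormal basis. Comparing with \eqref{eq:Malpha_def_opt} identifies the eigenvalues of \(M_0(P)\) as \(w^{(0)}_{ii}\) and \(\tfrac12 w^{(0)}_{ij}\).

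The limit follows by squaring \(\lim_{\lambda_j\to\lambda_i}(\log\lambda_i-\log\lambda_j)/(\lambda_i-\lambda_j)=1/\lambda_i\), which gives \(2\lambda_i^{-2}\). This is also consistent with Theorem~\ref{thm:L0_entrywise}. The only delicate point is the bookkeeping of the factor \(2\) from pairing off-diagonal entries against the \(1/\sqrt2\) normalization of the basis. That is where the \(\tfrac12\) in the eigenvalues arises, and I would check it carefully.
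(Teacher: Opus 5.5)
Your proposal is correct and follows the paper's proof essentially step for step. Like the paper, you use trace invariance with $P^0=I$, take the entrywise coefficients $c^{(0)}_{ij}$ from Theorem~\ref{thm:L0_entrywise}, and pair the off-diagonal terms to get $8\bigl(c^{(0)}_{ij}\bigr)^2=w^{(0)}_{ij}$. You then read off the eigenvalues $w^{(0)}_{ii}$ and $\tfrac12 w^{(0)}_{ij}$ in the orthonormal basis and square the divided-difference limit. You are also right that the displayed formula in Theorem~\ref{thm:L0_entrywise} omits the factor $\widetilde Y_{ij}$, which its proof supplies.
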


\begin{proof}
We start with the eigendecomposition $P$ and define
\[
\widetilde H_Y:=Q^\top \mathcal L_{P,0}(Y)Q,
\qquad
\widetilde H_Z:=Q^\top \mathcal L_{P,0}(Z)Q.
\]
Since $P^0=I$ and the trace is invariant under orthogonal similarity
transformations, we obtain
\[
\ipg{Y}{Z}{P}
=
4\,\tr(\widetilde H_Y\,\widetilde H_Z).
\]

By Theorem~\ref{thm:L0_entrywise}, the entries of $\widetilde H_Y$ and
$\widetilde H_Z$ satisfy
\[
(\widetilde H_Y)_{ij}=c_{ij}^{(0)}\widetilde Y_{ij},
\qquad
(\widetilde H_Z)_{ij}=c_{ij}^{(0)}\widetilde Z_{ij},
\]
where
\[
c_{ii}^{(0)}=\frac{1}{2\lambda_i},
\qquad
c_{ij}^{(0)}
=
\frac{\log\lambda_i-\log\lambda_j}{2(\lambda_i-\lambda_j)}
\quad(i\neq j).
\]
Substituting these expressions yields
\[
\tr(\widetilde H_Y\,\widetilde H_Z)
=
\sum_{i,j=1}^n
\bigl(c_{ij}^{(0)}\bigr)^2\,
\widetilde Y_{ij}\widetilde Z_{ij}.
\]

Consider the summand above.
For a diagonal entry $i=j$, using $c_{ii}^{(0)}=1/(2\lambda_i)$ gives
\[
4\bigl(c_{ii}^{(0)}\bigr)^2
=
\lambda_i^{-2}
=
w_{ii}^{(0)}.
\]
For an off-diagonal entry $i\neq j$, the pair $(i,j)$ and $(j,i)$ together contributes
\[
8\bigl(c_{ij}^{(0)}\bigr)^2
=
2\left(
\frac{\log\lambda_i-\log\lambda_j}{\lambda_i-\lambda_j}
\right)^2
=
\,w_{ij}^{(0)}.
\]
This establishes \eqref{eq:metric_weights_tilde_0_opt}--\eqref{eq:wii_wij_0_opt}.

Since $\{\widehat{\mathbf E}^{(ii)},\widehat{\mathbf E}^{(ij)}\}$
is a Frobenius-orthonormal basis of $\Sym(n)$, comparing
Definition~\ref{def:Malpha} with \eqref{eq:metric_weights_tilde_0_opt}
yields
\[
M_0(P)\widehat{\mathbf E}^{(ii)}
=
w_{ii}^{(0)}\,\widehat{\mathbf E}^{(ii)},
\qquad
M_0(P)\widehat{\mathbf E}^{(ij)}
=
\frac{1}{2}w_{ij}^{(0)}\,\widehat{\mathbf E}^{(ij)}.
\]
Finally, we get
\[
\lim_{\lambda_j\to\lambda_i}
\left(
\frac{\log\lambda_i-\log\lambda_j}{\lambda_i-\lambda_j}
\right)^2
=
\left(
\lim_{\lambda_j\to\lambda_i}
\frac{\log\lambda_i-\log\lambda_j}{\lambda_i-\lambda_j}
\right)^2
=
\left(\frac{1}{\lambda_i}\right)^2
=
\lambda_i^{-2},
\]
which completes the proof.
\end{proof}

We now derive the coordinate representation of the metric operator \(M_\alpha(P)\), defined in Definition~\ref{def:Malpha}, with respect to the Frobenius-orthonormal basis \(\{\widehat{\mathbf E}^{(ii)}, \widehat{\mathbf E}^{(ij)}\}\).
Let $P\in\SPD(n)$ and
\begin{equation}
\label{eq:basis_BP}
\mathcal B_P
=
\{\widehat{\mathbf E}^{(ii)}\}_{i=1}^n
\cup
\{\widehat{\mathbf E}^{(ij)}\}_{1\le i<j\le n}
\end{equation}
be the Frobenius-orthonormal basis of $\Sym(n)$ introduced above, satisfying
\[
\ipF{\widehat{\mathbf E}^{(ab)}}{\widehat{\mathbf E}^{(cd)}}=\delta_{(ab),(cd)}.
\]
For any $Y\in\Sym(n)$, its coordinate vector with respect to the basis
$\mathcal B_P$ is defined by
\[
[y]_{\mathcal B_P}
=
\left(
\ipF{Y}{\widehat{\mathbf E}^{(1)}},
\ipF{Y}{\widehat{\mathbf E}^{(2)}},
\dots,
\ipF{Y}{\widehat{\mathbf E}^{(d)}}
\right)^\top
\in\mathbb{R}^{d}.
\]
where $d=\dim\Sym(n)=n(n+1)/2$. Equivalently, we have
\[
Y=\sum_{k=1}^{d} y_k\,\widehat{\mathbf E}^{(k)},
\qquad
y_k=\ipF{Y}{\widehat{\mathbf E}^{(k)}}.
\]
Here we use the single-index notation
\[
\mathcal B_P=\{\widehat{\mathbf E}^{(k)}\}_{k=1}^{d},
\]
obtained by enumerating the elements
$\{\widehat{\mathbf E}^{(ab)}\}_{1\le a\le b\le n}$ in a fixed order.

The matrix representation of the metric operator $M_\alpha(P)$
with respect to the basis $\mathcal B_P$ is defined by
\[ \big([M_\alpha(P)]_{\mathcal B_P}\big)_{k\ell} := g_P^{(\alpha)}\!\left(\widehat{\mathbf E}^{(k)},\,\widehat{\mathbf E}^{(\ell)}\right). \]
By Theorem~\ref{thm:metric_weights_Malpha} and ~\ref{thm:metric_weights_M0}, this matrix is diagonal in the basis $\mathcal B_P$ and takes the form
\begin{equation}
\label{eq:metric_operator_matrix}
[M_{\alpha}(P)]_{\mathcal{B}_{P}}
=
\operatorname{diag}\!\left(
m_{11}^{(\alpha)},\ldots,m_{nn}^{(\alpha)},
m_{12}^{(\alpha)},\ldots,m_{n-1,n}^{(\alpha)}
\right)
\in \mathbb{R}^{d\times d}.
\end{equation}
Here
\[
    m_{ii}^{(\alpha)} := w_{ii}^{(\alpha)},
    \qquad
    m_{ij}^{(\alpha)} := \frac{1}{2}w_{ij}^{(\alpha)},
    \qquad 1\le i<j\le n.
\]
Consequently, for any $Y,Z\in\Sym(n)$,
\begin{equation}
\label{eq:metric_coord_matrix_rewritten}
g^{(\alpha)}_P(Y,Z)
=
[y]_{\mathcal B_P}^{\top}
[M_\alpha(P)]_{\mathcal B_P}
[z]_{\mathcal B_P}.
\end{equation}
Thus, in local coordinates $\mathcal B_P$, the Riemannian metric becomes a weighted
Euclidean inner product on $\mathbb{R}^{d}$, and its matrix representation satisfies
\[
[M_\alpha(P)]_{\mathcal B_P}
\in\mathbb{R}^{d\times d},
\qquad
d=\frac{n(n+1)}2.
\]

\section{Condition number estimates for the Riemannian Hessian at an optimal point}\label{sec4}
In this section, we derive estimates for the condition number of the Riemannian Hessian at an optimal point and use them to compare the robustness of different Riemannian metrics for optimization over ill-conditioned SPD matrices. 

Let \(f : \SPD(n) \to \mathbb{R}\) be twice continuously differentiable, and let \(P^\star \in \SPD(n)\) be a local minimizer of \(f\).
The Euclidean gradient of \(f\) at \(P\) is defined as the unique matrix \(\nabla f(P)\in\Sym(n)\) satisfying
\[
\langle \nabla f(P), Y\rangle_F = (Df)(P)[Y],
\qquad Y\in\Sym(n).
\]
Since \(P^\star\) is a local minimizer, it satisfies
$
\nabla f(P^\star)=0.
$
The Euclidean Hessian of \(f\) at \(P\) is the linear operator
\[
\nabla^2 f(P):\Sym(n)\to\Sym(n),
\qquad
\nabla^2 f(P)[Y]=(D(\nabla f))(P)[Y],
\qquad Y\in\Sym(n).
\]
We assume throughout the paper that \(P^\star\) is nondegenerate, namely,
\[
\langle Y,\nabla^2 f(P^\star)[Y]\rangle_F>0
\qquad
\text{for all } Y\in\Sym(n)\setminus\{0\}.
\]

Next, we derive the Riemannian gradient and Hessian associated with the AP metric defined in \eqref{eq:riemannian_metric}.
The Riemannian gradient of \(f\) at \(P\) with respect to the metric \(g^{(\alpha)}\) is the unique tangent vector \(\grad^{(\alpha)} f(P)\in T_P\SPD(n)\cong\Sym(n)\) satisfying
\[
g^{(\alpha)}_P\!\bigl(\grad^{(\alpha)} f(P),Y\bigr)
=
(Df)(P)[Y],
\qquad
Y\in\Sym(n).
\]
Expressed in the Frobenius-orthonormal basis \(\mathcal B_P\), defined in \eqref{eq:basis_BP}, this relation becomes
\begin{equation}
\label{eq:rgrad_coord}
[\grad^{(\alpha)} f(P)]_{\mathcal B_P}
=
[M_\alpha(P)]_{\mathcal B_P}^{-1}
[\nabla f(P)]_{\mathcal B_P}.
\end{equation}
Now let \(P=Q\Lambda Q^\top\in\SPD(n)\), where
\(\Lambda=\diag(\lambda_1,\dots,\lambda_n)\), and define
\[
\widetilde G_E:=Q^\top \nabla f(P)\, Q .
\]
In the Frobenius-orthonormal basis \(\mathcal B_P\), defined in \eqref{eq:basis_BP}, the coordinates of the Euclidean gradient are given by
\[
([\nabla f(P)]_{\mathcal B_P})_{(ii)}
=
(\widetilde G_E)_{ii},
\qquad
([\nabla f(P)]_{\mathcal B_P})_{(ij)}
=
\sqrt{2}\,(\widetilde G_E)_{ij}.
\]
Since the metric matrix \([M_\alpha(P)]_{\mathcal B_P}\) is diagonal with diagonal entries
\(w_{ii}^{(\alpha)}\) and \((1/2)w_{ij}^{(\alpha)}\), substituting these expressions into \eqref{eq:rgrad_coord} yields
\[
([\grad^{(\alpha)} f(P)]_{\mathcal B_P})_{(ii)}
=
\frac{(\widetilde G_E)_{ii}}{w_{ii}^{(\alpha)}},
\qquad i=1,\dots,n,
\]
and
\[
([\grad^{(\alpha)} f(P)]_{\mathcal B_P})_{(ij)}
=
\frac{2\sqrt{2}\,(\widetilde G_E)_{ij}}{w_{ij}^{(\alpha)}},
\qquad 1\le i<j\le n.
\]
Note that the Riemannian gradient can be derived directly from the horizontal lift induced by the Riemannian submersion structure underlying the AP geometry; please refer to Appendix~\ref{subsec:riemannian_gradient_horizontal_lift}.

Let \(\nabla^{(\alpha)}\) denote the Levi--Civita connection associated with the metric \(g^{(\alpha)}\). The Riemannian Hessian of \(f\) at \(P\) is the linear operator
\[
\Hess^{(\alpha)} f(P):\Sym(n)\to\Sym(n),
\]
defined by
\[
\Hess^{(\alpha)} f(P)[Y]
=
\nabla^{(\alpha)}_Y \grad^{(\alpha)} f(P),
\qquad
Y\in\Sym(n).
\]
In addition, its relation to horizontal lifts under the Riemannian submersion structure of the AP geometry is derived in Appendix~\ref{hession_submersion}.
In particular, at the minimizer \(P^\star\), the identity
\[
\grad^{(\alpha)} f(P^\star)=0
\]
implies that the affine connection term vanishes. Therefore, for any \(Y\in\Sym(n)\),
\[
\Hess^{(\alpha)} f(P^\star)[Y]
=
\bigl(D(\grad^{(\alpha)} f)\bigr)(P^\star)[Y].
\]

Moreover, with respect to the Frobenius-orthonormal basis \(\mathcal B_{P}\), defined in \eqref{eq:basis_BP}, the Euclidean Hessian is represented by the matrix
\begin{equation}
\label{eq:eucl_hessian_matrix}
\bigl([\nabla^{2} f(P)]_{\mathcal B_{P}}\bigr)_{k\ell}
=
\ipF{\widehat{\mathbf E}^{(k)}}{\nabla^{2} f(P)\bigl[\widehat{\mathbf E}^{(\ell)}\bigr]},
\end{equation}
which belongs to \(\mathbb{R}^{d\times d}\), where
$d=\dim\Sym(n)=\frac{n(n+1)}{2}.$
Consequently, with respect to the Frobenius-orthonormal basis
\(\mathcal B_{P^\star}\), defined in \eqref{eq:basis_BP}, the Riemannian
Hessian satisfies
\begin{equation}
\label{eq:riem_hessian_matrix}
[\Hess^{(\alpha)} f(P^\star)]_{\mathcal B_{P^\star}}
=
[M_\alpha(P^\star)]_{\mathcal B_{P^\star}}^{-1}
[\nabla^{2} f(P^\star)]_{\mathcal B_{P^\star}},
\end{equation}
where \([M_\alpha(P^\star)]_{\mathcal B_{P^\star}}\) is defined in
\eqref{eq:metric_operator_matrix}. This identity shows that the metric
operator \(M_\alpha(P^\star)\) acts as a preconditioner for the Euclidean
Hessian.

In many optimization problems, the local convergence behavior of iterative
methods is governed by the condition number of the
Riemannian Hessian at the minimizer
\cite{absil2008optimization, boumal2023introduction}. Motivated by this
observation, we study how the condition number of the Riemannian Hessian depend on
the metric operator. To separate the condition number coming from the objective
function from the condition number induced by the geometry, we introduce the
notation
\[
    H_E(P^\star)
    :=
    [\nabla^{2} f(P^\star)]_{\mathcal B_{P^\star}},
    \qquad
    H_\alpha(P^\star)
    :=
    [\Hess^{(\alpha)} f(P^\star)]_{\mathcal B_{P^\star}}.
\]

Letting \(P\in\SPD\) have the eigendecomposition
\[
    P=Q\Lambda Q^\top,
    \qquad
    \Lambda=\operatorname{diag}(\lambda_1,\ldots,\lambda_n),
    \qquad
    0<\lambda_{\min}(P)\le \lambda_{\max}(P),
\]
we denote the condition number of \(P\) by
\[
    \kappa(P)
    :=
    \frac{\lambda_{\max}(P)}{\lambda_{\min}(P)}.
\]

Moreover, with respect to the eigenbasis \(\mathcal B_P\), 
Theorems~\ref{thm:metric_weights_Malpha} and~\ref{thm:metric_weights_M0} give the diagonal elements of
\([M_\alpha(P)]_{\mathcal B_P}\) in equation \eqref{eq:metric_operator_matrix}. More precisely, for diagonal directions, we have
\[
    m_{ii}^{(\alpha)}=\lambda_i^{2\alpha-2},
    \qquad i=1,\ldots,n,
\]
whereas for off-diagonal directions, we have
\[
    m_{ij}^{(\alpha)}
    =
    \begin{cases}
    \displaystyle
    \frac{1}{2}
    \frac{(\lambda_i^{2\alpha}-\lambda_j^{2\alpha})^2}
    {\alpha^2(\lambda_i-\lambda_j)^2
    (\lambda_i^{2\alpha}+\lambda_j^{2\alpha})},
    & \alpha\neq 0,\\[12pt]
    \displaystyle
    \left(
        \frac{\log\lambda_i-\log\lambda_j}{\lambda_i-\lambda_j}
    \right)^2,
    & \alpha=0,
    \end{cases}
    \qquad 1\le i<j\le n.
\]

The explicit form of the metric operator shows that, when \(\alpha=1\), the
Riemannian Hessian condition number at a minimizer is bounded by a constant
multiple of the Euclidean Hessian condition number, independently of the
condition number of the underlying SPD matrix. This observation is summarized
in the following theorem.

\begin{theorem}[Metric-independent conditioning of the \(\AP_1\) Hessian]
\label{thm:riem_hessian_condition_alpha1}
Let \(f:\SPD\to\mathbb R\) be twice continuously differentiable, and let
\(P^\star\in\SPD\) be a nondegenerate local minimizer of \(f\). Under the
\(\AP_1\) metric, the Riemannian Hessian at \(P^\star\) satisfies
\[
    \lambda_{\min}\!\bigl(H_1(P^\star)\bigr)
    \ge
    \lambda_{\min}\!\bigl(H_E(P^\star)\bigr),
    \qquad
    \lambda_{\max}\!\bigl(H_1(P^\star)\bigr)
    \le
    2\lambda_{\max}\!\bigl(H_E(P^\star)\bigr).
\]
Moreover,
\[
    \lambda_{\max}\!\bigl(H_1(P^\star)\bigr)
    \ge
    \lambda_{\max}\!\bigl(H_E(P^\star)\bigr),
    \qquad
    \lambda_{\min}\!\bigl(H_1(P^\star)\bigr)
    \le
    2\lambda_{\min}\!\bigl(H_E(P^\star)\bigr).
\]
Consequently, we have
\[
    \frac{1}{2}\,\kappa\!\bigl(H_E(P^\star)\bigr)
    \le
    \kappa\!\bigl(H_1(P^\star)\bigr)
    \le
    2\,\kappa\!\bigl(H_E(P^\star)\bigr).
\]
In particular, the conditioning factor induced by the \(\AP_1\) metric is
uniformly bounded by an absolute constant, independently of \(\kappa(P^\star)\).
\end{theorem}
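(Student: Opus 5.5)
The proof reduces to two observations. First, at \(\alpha=1\) the diagonal metric matrix \([M_1(P^\star)]_{\mathcal B_{P^\star}}\) from \eqref{eq:metric_operator_matrix} has all its entries in \([1/2,1]\). Second, the Riemannian Hessian \(H_1(P^\star)=[M_1(P^\star)]^{-1}_{\mathcal B_{P^\star}}H_E(P^\star)\) from \eqref{eq:riem_hessian_matrix} is similar to a congruence of \(H_E(P^\star)\) by a uniformly bounded diagonal matrix. Ostrowski's theorem (equivalently, Courant--Fischer) then controls every eigenvalue.

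\emph{Step 1: entries of the metric matrix.} Set \(\alpha=1\) in the formulas of Theorem~\ref{thm:metric_weights_Malpha}. The diagonal entries are \(m_{ii}^{(1)}=\lambda_i^{0}=1\). For the off-diagonal entries, the factorization \(\lambda_i^2-\lambda_j^2=(\lambda_i-\lambda_j)(\lambda_i+\lambda_j)\) gives
\[
m_{ij}^{(1)}=\frac{1}{2}\,\frac{(\lambda_i+\lambda_j)^2}{\lambda_i^2+\lambda_j^2}.
\]
For \(\lambda_i=\lambda_j\) this value is the continuous limit, namely \(1\). Since \(\lambda_i,\lambda_j>0\), we have \(\lambda_i^2+\lambda_j^2\le(\lambda_i+\lambda_j)^2\le 2(\lambda_i^2+\lambda_j^2)\), so \(m_{ij}^{(1)}\in[1/2,1]\). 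Writing \(D:=[M_1(P^\star)]_{\mathcal B_{P^\star}}\), we conclude that \(D\) is diagonal with \(\tfrac12 I\preceq D\preceq I\), regardless of the spectrum of \(P^\star\).

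\emph{Step 2: reduction to a symmetric matrix.} The matrix \(H_E:=H_E(P^\star)\) is symmetric because it represents a self-adjoint operator in a Frobenius-orthonormal basis. By the nondegeneracy assumption it is positive definite. The product \(H_1=D^{-1}H_E\) is generally not symmetric, but
\[
D^{1/2}H_1D^{-1/2}=D^{-1/2}H_ED^{-1/2}=:S.
\]
So \(H_1\) has the same (real, positive) eigenvalues as the symmetric positive definite matrix \(S\), which is a congruence of \(H_E\).

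\emph{Step 3: eigenvalue bounds via Ostrowski.} Ostrowski's theorem, applied to the congruence \(S=D^{-1/2}H_ED^{-1/2}\), gives for each \(k\)
\[
\lambda_k(S)=\theta_k\,\lambda_k(H_E),\qquad \theta_k\in\bigl[\lambda_{\min}(D^{-1}),\lambda_{\max}(D^{-1})\bigr]\subseteq[1,2].
\]
Alternatively, take the Rayleigh quotient \(y^\top S y/y^\top y\) with \(x=D^{-1/2}y\), which turns it into \(x^\top H_E x/x^\top D x\). Then use \(x^\top x\le 2\,x^\top Dx\) and \(x^\top Dx\le x^\top x\) in the min–max characterization. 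Taking \(k\) to be the smallest and the largest index yields all four inequalities of the theorem:
\[
\lambda_{\min}(H_E)\le\lambda_{\min}(H_1)\le2\lambda_{\min}(H_E),\qquad
\lambda_{\max}(H_E)\le\lambda_{\max}(H_1)\le2\lambda_{\max}(H_E).
\]
Dividing the upper bound on \(\lambda_{\max}(H_1)\) by the lower bound on \(\lambda_{\min}(H_1)\) gives \(\kappa(H_1)\le2\kappa(H_E)\). Dividing the lower bound on \(\lambda_{\max}(H_1)\) by the upper bound on \(\lambda_{\min}(H_1)\) gives \(\kappa(H_1)\ge\tfrac12\kappa(H_E)\).

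\emph{Main obstacle.} The only delicate point is Step 2. One must not take eigenvalues of the nonsymmetric product \(D^{-1}H_E\) at face value; the similarity to \(S\) is what legitimizes the Courant--Fischer argument. The remaining work is the elementary inequality of Step 1. It is worth noting explicitly that this inequality involves no dependence on \(\kappa(P^\star)\), which is what gives the final claim that the conditioning factor is bounded by an absolute constant.
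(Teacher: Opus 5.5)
Your proof is correct and follows the paper's argument essentially step for step. It uses the same bound $\tfrac12\le m_{ij}^{(1)}\le 1$ (with $m_{ii}^{(1)}=1$) on the diagonal metric matrix, the same similarity of $[M_1]^{-1}H_E$ to the symmetric matrix $[M_1]^{-1/2}H_E[M_1]^{-1/2}$, and the same eigenvalue bracketing; you justify that bracketing explicitly via Ostrowski's theorem and Courant--Fischer, where the paper simply asserts it.
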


\begin{proof}
We specialize the metric weights to \(\alpha=1\). For diagonal directions,
\[
    m_{ii}^{(1)}=1,
    \qquad i=1,\ldots,n.
\]
For off-diagonal directions, let \(r:=\lambda_j/\lambda_i>0\). Then
\[
\begin{aligned}
    m_{ij}^{(1)}
    &=
    \frac{1}{2}
    \frac{(\lambda_i^2-\lambda_j^2)^2}
    {(\lambda_i-\lambda_j)^2(\lambda_i^2+\lambda_j^2)}        \\
    &=
    \frac{1}{2}
    \frac{(\lambda_i+\lambda_j)^2}{\lambda_i^2+\lambda_j^2}
    =
    \frac{1}{2}
    \frac{(1+r)^2}{1+r^2},
    \qquad 1\le i<j\le n.
\end{aligned}
\]
Since
\[
    1
    \le
    \frac{(1+r)^2}{1+r^2}
    \le
    2,
    \qquad r>0,
\]
we obtain
\[
    \frac{1}{2}
    \le
    m_{ij}^{(1)}
    \le
    1,
    \qquad 1\le i<j\le n.
\]
Together with \(m_{ii}^{(1)}=1\), this implies
\[
    \lambda_{\min}\!\bigl([M_1(P)]_{\mathcal B_P}\bigr)
    \ge
    \frac{1}{2},
    \qquad
    \lambda_{\max}\!\bigl([M_1(P)]_{\mathcal B_P}\bigr)
    =
    1.
\]
Hence
\[
    1
    \le
    \lambda_{\min}\!\bigl([M_1(P)]_{\mathcal B_P}^{-1}\bigr)
    \le
    \lambda_{\max}\!\bigl([M_1(P)]_{\mathcal B_P}^{-1}\bigr)
    \le
    2.
\]

Applying these estimates at \(P=P^\star\), set
\[
    A:=[M_1(P^\star)]_{\mathcal B_{P^\star}}^{-1},
    \qquad
    B:=H_E(P^\star).
\]
Then
\[
    H_1(P^\star)=AB.
\]
Since \(A\) and \(B\) are positive definite, \(AB\) is similar to the
SPD matrix \(A^{1/2}BA^{1/2}\). Hence they have the
same eigenvalues. Therefore,
\[
    \lambda_{\min}(A)\lambda_{\min}(B)
    \le
    \lambda_{\min}\!\bigl(H_1(P^\star)\bigr)
    \le
    \lambda_{\max}(A)\lambda_{\min}(B),
\]
and
\[
    \lambda_{\min}(A)\lambda_{\max}(B)
    \le
    \lambda_{\max}\!\bigl(H_1(P^\star)\bigr)
    \le
    \lambda_{\max}(A)\lambda_{\max}(B).
\]
Using
\[
    1
    \le
    \lambda_{\min}(A)
    \le
    \lambda_{\max}(A)
    \le
    2,
\]
we obtain
\[
    \frac{1}{2}\,\kappa\!\bigl(H_E(P^\star)\bigr)
    \le
    \kappa\!\bigl(H_1(P^\star)\bigr)
    \le
    2\,\kappa\!\bigl(H_E(P^\star)\bigr).
\]
This completes the proof.
\end{proof}

\subsection{Comparison with the AI metric and other \(\AP_\alpha\) metrics}
\label{sec:comparison_ai_ap}
Theorem~\ref{thm:riem_hessian_condition_alpha1} shows that the \(\AP_1\)
metric satisfies the metric-independent bound
\[
    \frac{1}{2}\,\kappa\!\bigl(H_E(P^\star)\bigr)
    \le
    \kappa\!\bigl(H_1(P^\star)\bigr)
    \le
    2\,\kappa\!\bigl(H_E(P^\star)\bigr).
\]
Thus, the local Hessian condition number under the \(\AP_1\) metric depends
only on the Euclidean Hessian condition number
\(\kappa\!\bigl(H_E(P^\star)\bigr)\), and is independent of the condition
number \(\kappa(P^\star)\) of the minimizer.

For comparison, Lemma~1 of \cite{han2021riemannian} gives condition number
bounds for the AI and BW metrics. Specifically, these bounds can be written as
\[
    \frac{\kappa(P^\star)^2}
    {\kappa\!\bigl(H_E(P^\star)\bigr)}
    \le
    \kappa\!\bigl(H_{\mathrm{AI}}(P^\star)\bigr)
    \le
    \kappa(P^\star)^2
    \kappa\!\bigl(H_E(P^\star)\bigr),
\]
and
\[
    \frac{\kappa(P^\star)}
    {\kappa\!\bigl(H_E(P^\star)\bigr)}
    \le
    \kappa\!\bigl(H_{1/2}(P^\star)\bigr)
    \le
    \kappa(P^\star)
    \kappa\!\bigl(H_E(P^\star)\bigr),
\]
where \(H_{1/2}(P^\star)\) corresponds to the \(\AP_{1/2}\) metric, equivalently
\(\mathrm{BW}\) metric. Therefore, the BW metric has a linear dependence on
\(\kappa(P^\star)\), while the AI metric has a quadratic dependence on \(\kappa(P^\star)\). In contrast, Theorem~\ref{thm:riem_hessian_condition_alpha1} shows that the bound for the \(\AP_1\) Hessian condition number is independent of
\(\kappa(P^\star)\).
Compared with the AI metric, the \(\AP_1\) metric is guaranteed to yield a
smaller local Hessian condition number, namely
$
    \kappa\!\bigl(H_1(P^\star)\bigr)
    <
    \kappa\!\bigl(H_{\mathrm{AI}}(P^\star)\bigr),
$
whenever the upper bound for the \(\AP_1\) Hessian condition number is smaller
than the lower bound for the AI Hessian condition number:
\[
    2\,\kappa\!\bigl(H_E(P^\star)\bigr)
    <
    \frac{
        \kappa(P^\star)^2
    }{
        \kappa\!\bigl(H_E(P^\star)\bigr)
    }.
\]
Equivalently, this condition can be written as
\[
    \kappa(P^\star)
    >
    \sqrt{2}\,\kappa\!\bigl(H_E(P^\star)\bigr).
\]

More generally, for any fixed \(\alpha\neq 1\), the same 
argument as in Theorem~\ref{thm:riem_hessian_condition_alpha1} implies that
the lower bound for the \(\AP_\alpha\) Hessian condition number grows at least as
\[
    \kappa\!\bigl(H_\alpha(P^\star)\bigr)
    \ge
    \frac{
        \kappa(P^\star)^{2|\alpha-1|}
    }{
        \kappa\!\bigl(H_E(P^\star)\bigr)
    },
    \qquad \alpha\neq 1.
\]
Therefore, for any fixed \(\alpha\neq 1\), the \(\AP_1\) metric is guaranteed
to yield a smaller local Hessian condition number than the \(\AP_\alpha\)
metric whenever
\[
    \kappa(P^\star)
    >
    \left(
        \sqrt{2}\,\kappa\!\bigl(H_E(P^\star)\bigr)
    \right)^{1/|\alpha-1|}.
\]
Under this condition, we have
\[
    \kappa\!\bigl(H_1(P^\star)\bigr)
    <
    \kappa\!\bigl(H_\alpha(P^\star)\bigr).
\]
Consequently, as the minimizer \(P^\star\) becomes increasingly ill-conditioned,
the \(\AP_1\) geometry provides a more robust local Hessian conditioning bound
than the AI metric and any fixed \(\AP_\alpha\) geometry with
\(\alpha\neq 1\).

\subsection{Example: trace regression objective}
We illustrate the above comparison using a trace regression problem on
\(\SPD\). Let \(A_1,\ldots,A_m\in\Sym(n)\) be fixed sensing matrices and consider
\[
    \min_{P\in\SPD}
    f(P)
    =
    \frac{1}{2m}
    \sum_{i=1}^m
    \left(
        \tr(A_iP)-y_i
    \right)^2,
\]
where \(y=(y_1,\ldots,y_m)\in\mathbb R^m\) is the observed data. In the
rank-one sensing case used in the numerical experiments in Section~\ref{sec7}, we take
\(A_i=a_i a_i^\top\) with \(a_i\in\mathbb R^n\), so that
\[
    \tr(A_iP)=a_i^\top P a_i.
\]
The Euclidean gradient and Hessian are
\[
    \nabla f(P)
    =
    \frac{1}{m}
    \sum_{i=1}^m
    \left(
        \tr(A_iP)-y_i
    \right)A_i,
\]
and
\[
    \nabla^2 f(P)[U]
    =
    \frac{1}{m}
    \sum_{i=1}^m
    \langle A_i,U\rangle_F A_i,
    \qquad U\in\Sym(n).
\]
In particular, if the sensing matrices are chosen so that this Hessian is
positive definite on \(\Sym(n)\), then
\(\kappa\!\bigl(H_E(P^\star)\bigr)\)
is determined only by \(A_1,\ldots,A_m\), and is independent of the
condition number of \(P^\star\).

Now choose the noiseless observations from a prescribed matrix
\(P^\star\in\SPD\), namely
\[
    y_i=\tr(A_iP^\star),
    \qquad i=1,\ldots,m.
\]
Then
\[
    \nabla f(P^\star)=0,
\]
so \(P^\star\) is a minimizer of the trace regression objective. By changing
the observations \(y\), we can therefore change the minimizer \(P^\star\)
without changing the Euclidean Hessian. For example, we may take
\[
    P^\star_\varepsilon
    =
    Q\operatorname{diag}(\varepsilon,1,\ldots,1)Q^\top,
    \qquad \varepsilon>0,
\]
where \(Q\) is orthogonal. Then
\[
    \kappa(P^\star_\varepsilon)
    =
    \frac{1}{\varepsilon}
    \to\infty
    \qquad
    \text{as }\varepsilon\to 0,
\]
while \(\kappa\!\bigl(H_E(P^\star_\varepsilon)\bigr)\) remains fixed.

For the comparison between the \(\AP_1\) and AI metrics, the condition
ensuring that \(\AP_1\) has a smaller Riemannian Hessian condition number is
\[
    \kappa(P^\star)
    >
    \sqrt{2}\,\kappa\!\bigl(H_E(P^\star)\bigr).
\]
Since \(\kappa\!\bigl(H_E(P^\star)\bigr)\) is fixed in this example, this
condition is eventually satisfied as \(\kappa(P^\star)\to\infty\). Therefore,
for sufficiently ill-conditioned minimizers, the \(\AP_1\) metric is
guaranteed to yield a smaller Riemannian Hessian condition number than the AI
metric.

Similarly, for any fixed \(\alpha\neq 1\), the condition ensuring that
\(\AP_1\) has a smaller Riemannian Hessian condition number than
\(\AP_\alpha\) is
\[
    \kappa(P^\star)
    >
    \left(
        \sqrt{2}\,\kappa\!\bigl(H_E(P^\star)\bigr)
    \right)^{1/|\alpha-1|}.
\]
Again, because \(\kappa\!\bigl(H_E(P^\star)\bigr)\) is fixed, this condition
also holds once \(\kappa(P^\star)\) is sufficiently large. Hence, for every
fixed \(\alpha\neq 1\), the estimates imply
\[
    \kappa\!\bigl(H_1(P^\star)\bigr)
    <
    \kappa\!\bigl(H_\alpha(P^\star)\bigr),
    \qquad \alpha\neq 1,
\]
in sufficiently ill-conditioned regimes.

\section{Implications for Local Convergence of Riemannian Optimization Algorithms}\label{sec5}
In this section, we explain how the condition number of the Riemannian Hessian at the minimizer influences the local convergence behavior. As illustrative examples, we consider the Riemannian steepest descent (RSD) method and the Riemannian trust-region (RTR) method. This analysis can also be extended to other Riemannian optimization algorithms.

We first introduce the exponential map formula for \(\alpha\neq 0\). For \(P\in\SPD(n)\) and \(X\in T_P\SPD(n)\simeq\Sym(n)\), the exponential map \cite{lee2018introduction} is defined by
\[
\Exp_P^{(\alpha)}(tX):=\gamma_{P,X}^{(\alpha)}(t),
\]
where \(\gamma_{P,X}^{(\alpha)}\) denotes the geodesic satisfying
\[
\gamma_{P,X}^{(\alpha)}(0)=P,
\qquad
\dot{\gamma}_{P,X}^{(\alpha)}(0)=X,\quad
\text{and}\quad
\nabla^{(\alpha)}_{\dot{\gamma}_{P,X}^{(\alpha)}(t)}
\dot{\gamma}_{P,X}^{(\alpha)}(t)=0.
\]
Moreover, for each \(X\in T_P\SPD(n)\simeq\Sym(n)\), there exists a unique matrix \(Y\in\Sym(n)\) such that
\[
Y=\mathcal L_{P,\alpha}(2\alpha X),\quad
\text{and}
\quad
\Exp_P^{(\alpha)}(X)
=
\Bigl((I+Y)\,P^{2\alpha}\,(I+Y)\Bigr)^{\frac{1}{2\alpha}}.
\]
In the limiting case \(\alpha=0\), corresponding to the Log-Euclidean metric, the associated exponential map is given in \cite{arsigny2007geometric}.
\[
\Exp_P^{(0)}(X)
=
\exp\bigl(\log(P)+(D\log)(P)[X]\bigr).
\]
A proof of the case \(\alpha\neq 0\) is deferred to Theorem~\ref{thm:Exp_alpha_general} in the appendix.

After introducing the exponential map, we now recall the Riemannian distance induced by the \(\AP\) metric.
Let \(d^{(\alpha)}:\SPD(n)\times\SPD(n)\to\mathbb R_{\ge 0}\) denote the
corresponding Riemannian distance, defined by
\[
d^{(\alpha)}(P,Q)
:=
\inf_{\gamma}
\int_0^1
\sqrt{
g^{(\alpha)}_{\gamma(t)}
\big(\dot\gamma(t),\dot\gamma(t)\big)
}\,dt,
\]
where the infimum is taken over all piecewise smooth curves
\(\gamma:[0,1]\to\SPD(n)\) such that \(\gamma(0)=P\) and \(\gamma(1)=Q\).
Here \(\dot\gamma(t)\in T_{\gamma(t)}\SPD(n)\) denotes the tangent vector
field along \(\gamma\).
For \(\alpha\neq 0\), this distance admits the closed-form expression \cite{minh2022alpha}
\begin{equation}
\label{eq:distance_alpha_nonzero}
d^{(\alpha)}(P,Q)
=
\frac{1}{|\alpha|}
\Bigl(
\tr\!\bigl(
P^{2\alpha}+Q^{2\alpha}
-
2\,(P^\alpha Q^{2\alpha}P^\alpha)^{1/2}
\bigr)
\Bigr)^{1/2}.
\end{equation}
In the limiting case \(\alpha=0\), one has
\begin{equation}
\label{eq:distance_alpha_zero}
d^{(0)}(P,Q)
=
\lim_{\alpha\to 0} d^{(\alpha)}(P,Q)
=
\|\log P-\log Q\|_F.
\end{equation}

After introducing the induced distance, we next recall the definition of
sectional curvature, which will be used to characterize the curvature
structure of the \(\AP\) geometry.

\begin{definition}[Sectional curvature {\cite[Chapter~3]{do1992riemannian}}]
Let \((\mathcal M,g)\) be a Riemannian manifold. For a point
\(p\in\mathcal M\) and a two-dimensional tangent plane
\(\sigma\subset T_p\mathcal M\), choose two linearly independent vectors
\(u,v\in T_p\mathcal M\) such that \(\sigma=\operatorname{span}\{u,v\}\).
The sectional curvature of \(\sigma\) at \(p\) is defined by
\[
    K_g(p;\sigma)
    :=
    \frac{
        g_p\bigl(R_p(u,v)v,u\bigr)
    }{
        g_p(u,u)g_p(v,v)-g_p(u,v)^2
    },
\]
where \(R\) is the Riemannian curvature tensor associated with \(g\).
This quantity is independent of the choice of basis \(u,v\) for the plane
\(\sigma\).
\end{definition}

We now apply this notion to the AP geometry and show that its sectional curvature is always nonnegative.

\begin{proposition}[Nonnegative sectional curvature of \(\AP\)]
\label{prop:ap_nonnegative_curvature}
For every \(\alpha\neq 0\), the \(\AP\) geometry on \(\SPD\) has
nonnegative sectional curvature.
\end{proposition}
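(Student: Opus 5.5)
The proof will use the Riemannian submersion structure of the \(\AP\) geometry (Appendix~\ref{sec: details of alpha procrustes geometry}) together with O'Neill's formula. The plan is to realize \(\AP_\alpha\) as an isometric image of the Bures--Wasserstein geometry. The distance formula \eqref{eq:distance_alpha_nonzero} shows that
\[
d^{(\alpha)}(P,Q)=\tfrac{1}{|\alpha|}\,d_{\mathrm{BW}}\bigl(P^{2\alpha},Q^{2\alpha}\bigr),
\qquad
d_{\mathrm{BW}}(A,B)^2=\tr\bigl(A+B-2(A^{1/2}BA^{1/2})^{1/2}\bigr).
\]
Hence the power map \(\phi_\alpha:P\mapsto P^{2\alpha}\), which is a diffeomorphism of \(\SPD(n)\) for \(\alpha\neq 0\), is an isometry from \((\SPD(n),g^{(\alpha)})\) onto \((\SPD(n),\alpha^{-2}g_{\mathrm{BW}})\), where \(g_{\mathrm{BW}}\) denotes the metric inducing \(d_{\mathrm{BW}}\).

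To make this rigorous at the level of metric tensors rather than distances, I will check it directly from \eqref{eq:riemannian_metric}. Set \(A=P^{2\alpha}\) and \(X'=(D\phi_\alpha)(P)[X]\). Writing \(P^{2\alpha}=\exp(2\alpha\log P)\), the chain rule gives
\[
(D\exp)(\log P)\circ(D\log)(P^{2\alpha})[X']=2\alpha X.
\]
Substituting into \eqref{eq:LPalphadef} shows that \(\mathcal L_{P,\alpha}(X)=\frac{1}{2\alpha}\mathcal L_A(X')\), where \(\mathcal L_A(X')\) is the solution of the Lyapunov equation \(HA+AH=X'\). Then \(g^{(\alpha)}_P(X,X)=\frac{4}{4\alpha^2}\tr\bigl(\mathcal L_A(X')A\,\mathcal L_A(X')\bigr)\), which is \(\alpha^{-2}\) times the standard BW metric at \(A\) in the normalization \(\tr(\mathcal L_A(X')A\mathcal L_A(X'))\), consistent with the \(\alpha=1/2\) case. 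As a cross-check, this agrees with the weights in Theorem~\ref{thm:metric_weights_Malpha} combined with the Daleckiĭ--Kreĭn divided differences of \(t\mapsto t^{2\alpha}\) from Lemma~\ref{lem:DaleckiiKrein}. Since sectional curvature is preserved by isometries and is multiplied by \(\alpha^{2}>0\) under the rescaling \(g\mapsto\alpha^{-2}g\), its sign is unchanged. The claim therefore reduces to nonnegativity of the BW sectional curvature.

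For the BW case I will use the submersion \(\pi:\mathrm{GL}(n)\to\SPD(n)\), \(\pi(M)=MM^\top\), where \(\mathrm{GL}(n)\) carries the flat Frobenius metric. This is a Riemannian submersion onto \((\SPD(n),g_{\mathrm{BW}})\) up to a constant factor, with horizontal space \(\{HM: H\in\Sym(n)\}\) at \(M\); this is the structure recorded in the appendix. O'Neill's formula gives, for orthonormal horizontal lifts \(\bar u,\bar v\),
\[
K_{\SPD}(u,v)=K_{\mathrm{GL}}(\bar u,\bar v)+\tfrac34\bigl\|[\bar U,\bar V]^{\mathcal V}\bigr\|^2=\tfrac34\bigl\|[\bar U,\bar V]^{\mathcal V}\bigr\|^2\ge 0,
\]
since \(\mathrm{GL}(n)\) is an open subset of a Euclidean space and so \(K_{\mathrm{GL}}=0\). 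This yields the result for \(\alpha=1/2\) and, by the isometry above, for every \(\alpha\neq 0\). One could alternatively apply O'Neill directly to the \(\AP_\alpha\) submersion in the appendix, but that requires identifying its total space as flat, and it is cleaner to route through BW.

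I expect the main obstacle to be the bookkeeping in the isometry step: checking that the composite \((D\exp)(\log P)\circ(D\log)(P^{2\alpha})\) is exactly \(2\alpha\,(D\phi_\alpha)(P)^{-1}\), and tracking the normalization constants so that the pullback is precisely a positive multiple of \(g_{\mathrm{BW}}\). I would verify this entrywise in the eigenbasis of \(P\) using Theorem~\ref{thm:Lalpha_entrywise}. A secondary point is that O'Neill's formula applies only because \(\pi\) is a genuine Riemannian submersion, meaning \(d\pi\) restricted to the horizontal space is an isometry; I would take this from the appendix or from \cite{bhatia2019bures,Takatsu2011}.
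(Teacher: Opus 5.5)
Your proof is correct, and its core is the same as the paper's: O'Neill's formula for a Riemannian submersion out of the flat space \((\mathrm{GL}(n),\langle\cdot,\cdot\rangle_F)\), where the vertical-bracket term is nonnegative. Where you differ is in how you get there.

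The paper applies O'Neill directly to \(\pi_\alpha\), taking from \cite{minh2022alpha} that \(\pi_\alpha\) is a Riemannian submersion onto \((\SPD,g^{(\alpha)})\). You instead first verify, from \eqref{eq:riemannian_metric}, that \(\phi_\alpha:P\mapsto P^{2\alpha}\) pulls back \(\alpha^{-2}g_{\mathrm{BW}}\) to \(g^{(\alpha)}\). Your identity \((D\exp)(\log P)\circ(D\log)(P^{2\alpha})=2\alpha\,(D\phi_\alpha)(P)^{-1}\) and its consequence \(\mathcal L_{P,\alpha}(X)=\frac{1}{2\alpha}\mathcal L_A(X')\) are right, and the constants match the weights in Theorem~\ref{thm:metric_weights_Malpha}. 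You then use O'Neill only for the classical quotient \(M\mapsto MM^\top\).

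Since \(\pi_\alpha(M)=\phi_\alpha^{-1}\bigl(\alpha^2MM^\top\bigr)\), both arguments use the same submersion, just factored differently. Your isometry step is the infinitesimal form of Lemma~\ref{lem:power-geodesics}.

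The paper's version is shorter. Yours relies only on the standard BW submersion plus a computation you carry out yourself, and it yields more than the sign. With \(g_{\mathrm{BW}}\) in your normalization,
\(K^{(\alpha)}_P(\sigma)=\alpha^2K^{\mathrm{BW}}_{P^{2\alpha}}\bigl((D\phi_\alpha)(P)\sigma\bigr)\),
so each \(\AP_\alpha\) is a rescaled isometric copy of BW.

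One correction to your closing remark: the direct route needs nothing extra. The total space of the appendix submersion \(\pi_\alpha\) is the same flat \((\mathrm{GL}(n),\langle\cdot,\cdot\rangle_F)\) as in the BW case, so there is no separate flatness to establish. Your detour is a matter of taste, not necessity.
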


The proof is provided in Appendix~\ref{sec:proof_ap_nonnegative_curvature}.

The nonnegative curvature property will be used to simplify the curvature
factor in the local convergence rate for Riemannian steepest descent.
We first recall the corresponding local convergence result from
\cite[Theorem~1]{han2021riemannian}.

\begin{theorem}[Local convergence of Riemannian steepest descent {\cite[Theorem~1]{han2021riemannian}}]
\label{thm:rsd_local_convergence}
Let \(P^\star\in\SPD\) be a nondegenerate local minimizer of twice continuously differentiable \(f\) under the
Riemannian metric \(g\), that is,
\[
    \grad^g f(P^\star)=0,
    \qquad
    H_g(P^\star)
    :=
    [\Hess^g f(P^\star)]_{\mathcal B_{P^\star}}
    \succ 0.
\]
Define
\[
    \kappa_g^\star
    :=
    \kappa\!\bigl(H_g(P^\star)\bigr),
    \qquad
    \lambda_{\min,g}^\star
    :=
    \lambda_{\min}\!\bigl(H_g(P^\star)\bigr),
    \qquad
    \lambda_{\max,g}^\star
    :=
    \lambda_{\max}\!\bigl(H_g(P^\star)\bigr).
\]
Let \(\Omega\) be a totally normal neighborhood of \(P^\star\) under the
metric \(g\), with diameter bounded by \(D\). By the nondegeneracy of \(P^\star\) and the continuity of the Riemannian Hessian, there exists a constant
\(c_{\mathrm{sd}}\ge 1\) such that, for every \(P\in\Omega\) and every unit
tangent vector \(U\in T_P\SPD\), the second derivative of \(f\) along the
geodesic \(t\mapsto \Exp_P^g(tU)\) is uniformly bounded as
\[
    \frac{\lambda_{\min,g}^\star}{c_{\mathrm{sd}}}
    \le
    \frac{d^2}{dt^2}
    f\bigl(\Exp_P^g(tU)\bigr)
    \le
    c_{\mathrm{sd}}\lambda_{\max,g}^\star,
    \qquad
    \|U\|_g=1.
\]
Let \(K_{\min,g}\) be a lower bound of the sectional curvature on \(\Omega\),
namely
$K_g(P;\sigma)\ge K_{\min,g}$
for every \(P\in\Omega\) and every two-dimensional tangent plane
\(\sigma\subset T_P\SPD\). Define
\[
    \zeta_g
    :=
    \begin{cases}
    \sqrt{-K_{\min,g}}\,D\,
    \coth\!\bigl(\sqrt{-K_{\min,g}}\,D\bigr),
    & K_{\min,g}<0,\\[4pt]
    1,
    & K_{\min,g}\ge 0.
    \end{cases}
\]
Then Riemannian steepest descent initialized at \(P_0\in\Omega\) with fixed
step size \(\eta=1/(c_{\mathrm{sd}}\lambda_{\max,g}^\star)\) satisfies, for
all \(t\ge 2\),
\[
    d_g^2(P_t,P^\star)
    \le
    c_{\mathrm{sd}}^2D^2\kappa_g^\star
    \left(
        1-
        \min\left\{
            \frac{1}{\zeta_g},
            \frac{1}{c_{\mathrm{sd}}^2\kappa_g^\star}
        \right\}
    \right)^{t-2}.
\]
\end{theorem}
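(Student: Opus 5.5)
The statement is quoted from \cite[Theorem~1]{han2021riemannian}, so the plan is to reproduce the standard local-convergence argument for Riemannian gradient descent on a totally normal neighborhood, tracking the constants $c_{\mathrm{sd}}$, $\kappa_g^\star$ and $\zeta_g$. The two ingredients are geodesic strong convexity and smoothness of $f$ on $\Omega$, and a trigonometric comparison inequality for the squared distance that holds under a sectional curvature lower bound.

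\textbf{Step 1: convexity and smoothness.} By the assumed bounds on $\frac{d^2}{dt^2}f(\Exp_P^g(tU))$, $f$ is $\mu$-geodesically strongly convex and $L$-smooth on $\Omega$. Here $\mu=\lambda_{\min,g}^\star/c_{\mathrm{sd}}$ and $L=c_{\mathrm{sd}}\lambda_{\max,g}^\star$, so $L/\mu=c_{\mathrm{sd}}^2\kappa_g^\star$. Integrating twice along the minimizing geodesic from $P$ to $P^\star$ (which exists and stays in $\Omega$ by total normality) gives three inequalities:
\[
f(P^\star)\ge f(P)+g_P(\grad f(P),\Exp_P^{-1}P^\star)+\tfrac{\mu}{2}d^2(P,P^\star),
\]
the descent lemma $f(P_{t+1})\le f(P_t)-\tfrac{\eta}{2}\|\grad f(P_t)\|^2$ for $\eta=1/L$, and the sandwich
\[
\tfrac{\mu}{2}d^2(P,P^\star)\le f(P)-f(P^\star)\le \tfrac{L}{2}d^2(P,P^\star).
\]
The descent lemma shows the iterates decrease $f$. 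Hence they remain in a sublevel set contained in $\Omega$; I take this invariance to be part of the choice of $\Omega$, as in Han et al.

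\textbf{Step 2: comparison inequality.} For the geodesic triangle $(P_t,P_{t+1},P^\star)$ with $P_{t+1}=\Exp_{P_t}(-\eta\grad f(P_t))$, use the Zhang--Sra lemma valid under $K\ge K_{\min,g}$:
\[
d^2(P_{t+1},P^\star)\le d^2(P_t,P^\star)+2\eta\, g(\grad f(P_t),\Exp_{P_t}^{-1}P^\star)+\zeta_g\eta^2\|\grad f(P_t)\|^2.
\]
When $K_{\min,g}\ge0$, Toponogov-type comparison gives $\zeta_g=1$. This case is the one invoked for AP through Proposition~\ref{prop:ap_nonnegative_curvature}.

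\textbf{Step 3: contraction.} Substitute the strong-convexity inequality for the inner product and the descent lemma for $\|\grad f\|^2$. This gives
\[
d^2(P_{t+1},P^\star)\le(1-\eta\mu)d^2(P_t,P^\star)+(\text{terms in } f\text{-gaps weighted by }\zeta_g).
\]
Choosing how to balance these terms (e.g.\ via the function-value form $f(P_{t+1})-f^\star$ combined with the distance) yields a rate factor $1-\min\{1/\zeta_g,\mu/L\}$ per step. Finally, convert back from function gaps to distances using the sandwich from Step 1. This conversion costs the factor $L/\mu=c_{\mathrm{sd}}^2\kappa_g^\star$, and bounding the initial distance by $D$ gives the prefactor $c_{\mathrm{sd}}^2D^2\kappa_g^\star$. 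The exponent $t-2$ comes from spending two initial iterations to enter the regime where the combined potential argument applies.

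\textbf{Main obstacle.} The hard part is Step 3's bookkeeping: producing exactly the $\min\{1/\zeta_g,1/(c_{\mathrm{sd}}^2\kappa_g^\star)\}$ rate and the $t-2$ offset. I would follow Han et al.'s potential-function argument directly rather than re-derive it, and verify invariance of $\Omega$ under the iteration.
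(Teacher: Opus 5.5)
Your plan is correct and is essentially the argument behind the citation. The paper proves nothing here itself, and Han et al.\ obtain the bound by applying Zhang--Sra's rate $f(P_t)-f(P^\star)\le(1-\min\{1/\zeta_g,\mu/L\})^{t-2}LD^2/2$ for geodesically $\mu$-strongly convex, $L$-smooth functions, with $\mu=\lambda_{\min,g}^\star/c_{\mathrm{sd}}$ and $L=c_{\mathrm{sd}}\lambda_{\max,g}^\star$, and then converting via $\tfrac{\mu}{2}d_g^2(P_t,P^\star)\le f(P_t)-f(P^\star)$ — exactly your Steps 1--3.

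The only inaccuracy is your explanation of the offset $t-2$: it is slack in that statement, not a warm-up phase. The potential $\zeta_g(f(P_t)-f(P^\star))+\tfrac{L}{2}d_g^2(P_t,P^\star)$ contracts by $1-\min\{1/\zeta_g,\mu/L\}$ from the very first step. Since it is at least $\tfrac{L+\zeta_g\mu}{2}d_g^2(P_t,P^\star)$ and initially at most $\tfrac{(1+\zeta_g)L}{2}D^2$, it already yields the bound with exponent $t$.
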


\begin{theorem}[Local convergence of Riemannian trust region {\cite[Theorem~2]{han2021riemannian}}]
\label{thm:rtr_local_convergence}
Under the same setting as in Theorem~\ref{thm:rsd_local_convergence}, let
$   \mathcal H_{P_t}:T_{P_t}\SPD\to T_{P_t}\SPD$
be the symmetric linear operator used in the trust-region model to approximate
the Riemannian Hessian \(\Hess^g f(P_t)\). Assume further on
\(\Omega\),
\[
    \|\mathcal H_{P_t}-\Hess^g f(P_t)\|
    \le
    \ell\|\grad^g f(P_t)\|,
\]
and
\[
    \bigl\|
        \nabla^2(f\circ \Exp^g_{P_t})(U)
        -
        \nabla^2(f\circ \Exp^g_{P_t})(0)
    \bigr\|
    \le
    \rho\|U\|
\]
for some constants \(\ell,\rho>0\). Then running Riemannian trust region from
\(P_0\in\Omega\) yields
\[
    d_g(P_t,P^\star)
    \le
    (2\sqrt{\rho}+\ell)
    \bigl(\kappa_g^\star\bigr)^2
    d_g^2(P_{t-1},P^\star).
\]
\end{theorem}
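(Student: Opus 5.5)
This is a local quadratic convergence statement for Riemannian trust region, quoted from \cite[Theorem~2]{han2021riemannian}. The plan is not to re-derive trust-region theory from scratch. Instead, we check that the setting of that theorem is met under a general metric \(g\) on \(\SPD\), and then track where the Hessian condition number \(\kappa_g^\star\) enters the classical superlinear analysis of Absil--Baker--Gallivan. We work in a totally normal neighborhood \(\Omega\) of \(P^\star\) and use the exponential map \(\Exp^g\) as retraction. The pullbacks \(\hat f_{P}:=f\circ\Exp^g_P\) are then smooth functions on the Euclidean space \(T_P\SPD\) with \(\nabla \hat f_P(0)=\grad^g f(P)\) and \(\nabla^2\hat f_P(0)=\Hess^g f(P)\). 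The Lipschitz assumption with constant \(\rho\) is exactly a uniform Lipschitz bound on \(\nabla^2\hat f_{P_t}\).

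The first step is to show that, near \(P^\star\), the trust-region radius eventually becomes inactive and the step \(U_t\) is the exact Newton-type step \(U_t=-\mathcal H_{P_t}^{-1}\grad^g f(P_t)\). By the continuity bounds stated in Theorem~\ref{thm:rsd_local_convergence}, the Hessians on \(\Omega\) have spectrum in \([\lambda^\star_{\min,g}/c,\,c\lambda^\star_{\max,g}\,]\). Since \(\grad^g f(P_t)\to0\), the operators \(\mathcal H_{P_t}\) inherit these bounds, so \(\|U_t\|\lesssim \|\grad^g f(P_t)\|/\lambda^\star_{\min,g}\) becomes smaller than the radius. The ratio test \(\rho_t\to1\) follows from a Taylor expansion of \(\hat f_{P_t}\) with the \(\rho\)-Lipschitz Hessian. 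Every step is therefore accepted.

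The second step is the one-step error estimate. By Taylor's theorem with integral remainder,
\[
\nabla\hat f_{P_t}(U_t)=\grad^g f(P_t)+\Hess^g f(P_t)U_t+O(\rho\|U_t\|^2).
\]
Substituting \(\mathcal H_{P_t}U_t=-\grad^g f(P_t)\) and using the gradient-accuracy assumption gives
\[
\|\nabla\hat f_{P_t}(U_t)\|\le(\ell\|\grad^g f(P_t)\|+\tfrac{\rho}{2}\|U_t\|)\|U_t\|.
\]
We then convert gradients into distances through the Hessian bounds on \(\Omega\):
\[
\lambda^\star_{\min,g}\,d_g(P,P^\star)\lesssim\|\grad^g f(P)\|\lesssim\lambda^\star_{\max,g}\,d_g(P,P^\star).
\]
This is obtained by integrating the Hessian along the minimizing geodesic to \(P^\star\) and comparing parallel transport with the differential of \(\Exp^g\), which is close to the identity on a small normal neighborhood. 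Combining these with \(\|U_t\|\lesssim \kappa_g^\star d_g(P_t,P^\star)\) bounds \(d_g(P_{t+1},P^\star)\) by a constant times \((\ell+\sqrt\rho)\) times a product of at most two factors of \(\kappa_g^\star\), times \(d_g^2(P_t,P^\star)\). After absorbing constants, this matches the stated form \((2\sqrt\rho+\ell)(\kappa_g^\star)^2 d_g^2(P_{t-1},P^\star)\). The \(\sqrt\rho\) rather than \(\rho\) reflects a normalization step: choose \(\Omega\) small enough that \(\rho\, d_g\le\sqrt\rho\) on it, or balance the two \(\kappa\) factors accordingly.

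The main obstacle is the gradient--distance comparison on a curved manifold. One has to control how \(D\Exp^g_P\) and parallel transport distort the relation between \(\nabla\hat f_P(U)\) and \(\grad^g f(\Exp^g_P U)\), and this is where the curvature of \(g\) could enter. I would first restrict \(\Omega\) to a ball on which \(\|D\Exp^g_P(U)-\mathrm{id}\|\le 1/2\), using smoothness of \(\Exp^g\). That makes all distortion factors absolute constants and leaves only \(\kappa_g^\star\) explicit. The exact constants are then taken verbatim from \cite[Theorem~2]{han2021riemannian}, whose argument is metric-agnostic, so it applies in particular to the \(\AP_\alpha\) metrics with \(\kappa_g^\star\) controlled by Theorem~\ref{thm:riem_hessian_condition_alpha1}.
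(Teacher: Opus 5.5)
The paper gives no proof of its own here. It imports the theorem verbatim from \cite[Theorem~2]{han2021riemannian}, so your closing move of taking the constants from that reference is the paper's entire route. The self-contained sketch you add, however, does not reach the stated inequality, and the step meant to bridge the difference fails. Carried through, your estimates give
\[
d_g(P_{t+1},P^\star)\le C\Bigl(\ell+\frac{\rho}{\lambda^\star_{\min,g}}\Bigr)(\kappa^\star_g)^2\,d_g^2(P_t,P^\star).
\]
Here the $\ell$-term comes from $\ell\,\|\grad^g f(P_t)\|\,\|U_t\|/\lambda^\star_{\min,g}$, the $\rho$-term comes from $\tfrac{\rho}{2}\|U_t\|^2/\lambda^\star_{\min,g}$, and $C$ collects powers of $c_{\mathrm{sd}}$ and the $D\Exp^g$/parallel-transport distortion factors. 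The target has explicit constants, so there is nothing to absorb $C$ into. More seriously, the proposed normalization ``choose $\Omega$ with $\rho\,d_g\le\sqrt\rho$'' does not change the coefficient of $d_g^2$. It bounds $\rho\,d_g^2$ by $\sqrt\rho\,d_g$, a term linear in $d_g(P_t,P^\star)$, which destroys the quadratic rate rather than producing $2\sqrt\rho$.

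This gap cannot be closed, because the inequality as quoted is not scale-consistent. Replacing $(f,\mathcal H_{P_t})$ by $(cf,c\,\mathcal H_{P_t})$ with $c>0$ leaves the RTR iterates (with exact model minimization), $\Omega$, $c_{\mathrm{sd}}$, $\kappa_g^\star$ and every admissible $\ell$ unchanged, but sends $\rho$ to $c\rho$.

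Concretely, for $n=1$ the $\AP_1$ metric on $\SPD(1)=(0,\infty)$ is Euclidean: $w^{(1)}_{11}=1$ and $\Exp_p(x)=p+x$. Take $f(p)=c\bigl(\tfrac12(p-1)^2+\tfrac16(p-1)^3\bigr)$ with $\mathcal H_{P_t}=\Hess f(P_t)$, so any $\ell>0$ is admissible, $\rho=c$ and $\kappa_g^\star=1$. From $p=1+e$, the Newton step (which is what tCG returns in one dimension) leaves error $e^2/(2(1+e))$, and a truncated or rejected step leaves a larger one. The claim allows at most $(2\sqrt c+\ell)e^2$, so it fails already for $e=0.1$, $c=\ell=10^{-2}$. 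The factor $1/\lambda^\star_{\min,g}$ in your derivation is therefore forced.

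What your argument genuinely yields is the scale-consistent bound displayed above. It holds only eventually, once the radius is inactive and steps are accepted, assuming the iterates stay in $\Omega$ and the subproblem is solved exactly or by tCG with a superlinear stopping rule. It does not hold for every $t$ from an arbitrary $P_0\in\Omega$. It still exhibits the $(\kappa^\star_g)^2$ dependence that Section~\ref{sec5} uses, and that is the version you should state.
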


We now specialize the convergence rate in
Theorem~\ref{thm:rsd_local_convergence} to the \(\AP\) geometries. By
Proposition~\ref{prop:ap_nonnegative_curvature}, every \(\AP\)
geometry has nonnegative sectional curvature. Hence, for the \(\AP\)
metric,
$
    K_{\min,\alpha}\ge 0,
    \quad
    \zeta_g=1.
$
Therefore, unlike the AI geometry, whose sectional curvature can be negative,
the \(\AP\) geometries do not suffer from the curvature penalty
\(\zeta_g>1\) appearing in the RSD convergence factor in
Theorem~\ref{thm:rsd_local_convergence}.

According to the condition-number comparison in Section~\ref{sec:comparison_ai_ap}, if
$
    \kappa(P^\star)
    >
    \sqrt{2}\,\kappa\!\bigl(H_E(P^\star)\bigr),
$
then
$
    \kappa_1^\star
    <
    \kappa_{\mathrm{AI}}^\star.
$
Moreover, for any fixed \(\alpha\neq 1\), if
$
    \kappa(P^\star)
    >
    \left(
        \sqrt{2}\,\kappa\!\bigl(H_E(P^\star)\bigr)
    \right)^{1/|\alpha-1|},
$
then
$
    \kappa_1^\star
    <
    \kappa_\alpha^\star
    \quad
    \text{for any fixed } \alpha\neq 1.
$
Therefore, in sufficiently ill-conditioned regimes, the \(\AP_1\) metric yields a smaller local Hessian condition number than both the AI metric and any fixed \(\AP\) metric with \(\alpha\neq 1\). Together with the nonnegative sectional curvature of the \(\AP\) family, this shows that the \(\AP_1\) geometry combines the absence of the RSD curvature penalty with a Riemannian metric-independent Riemannian Hessian condition number bound. Consequently, it provides sharper local convergence guarantees for RSD and a smaller condition number-dependent local convergence rate for RTR than the AI geometry and the other fixed \(\AP\) geometries with \(\alpha\neq 1\).

\section{Geodesic convexity under AP metric}
\label{sec6}
In this section, we show that geodesic convexity under the \(\mathrm{AP}_{1/2}\) \((\mathrm{BW})\) geometry can be transferred to any \(\mathrm{AP}\) geometry with \(\alpha\neq 0\): in detail, if \(h\) is geodesically convex under \(\mathrm{AP}_{1/2}\), then the function \(F_{\alpha}(P)=h(P^{2\alpha})\) is geodesically convex under \(\mathrm{AP}_{\alpha}\). Firstly, we introduce the definitions of geodesically convex sets and geodesic convexity \cite{sra2015conic}.

\begin{definition}[Geodesic convex set {\cite{sra2015conic}}]
A set \(\mathcal{X} \subseteq \mathcal{M}\) is geodesic convex if for any
\(x,y \in \mathcal{X}\), the distance-minimizing geodesic \(\gamma\) joining
the two points lies entirely in \(\mathcal{X}\).
\end{definition}

\begin{definition}[Geodesic convexity {\cite{sra2015conic}}]
Consider a geodesic convex set \(\mathcal{X} \subseteq \mathcal{M}\). A function
\(f:\mathcal{X}\to\mathbb{R}\) is called geodesic convex if for any
\(x,y\in\mathcal{X}\), the distance-minimizing geodesic \(\gamma\) joining
\(x\) and \(y\) satisfies
\[
    f(\gamma(t)) \leq (1-t)f(x) + t f(y),
    \qquad \forall t\in[0,1].
\]
Function \(f\) is strictly geodesic convex if the equality holds only when
\(t=0,1\).
\end{definition}

We now specialize the above notions to the AP geometry on
\(\SPD\). For each \(\alpha\neq 0\), we denote by
\(\gamma^{\AP_\alpha}_{P,Q}\) the  \(\AP_\alpha\)-geodesic
joining two points \(P,Q\in\SPD\). The following lemma describes how these
geodesics are related across different values of \(\alpha\), and it will be
the key ingredient for transferring geodesic convexity from
\(\AP_{1/2}\) to \(\AP_\alpha\).

\begin{lemma}[Power transformation of \texorpdfstring{\(\AP\)}{AP} geodesics]
\label{lem:power-geodesics}
Let \(\alpha\neq 0\) and let \(P,Q\in\SPD\). Define
\[
    X:=\Phi_\alpha(P)=P^{2\alpha},
    \qquad
    Y:=\Phi_\alpha(Q)=Q^{2\alpha}.
\]
Then \(X,Y\in\SPD\). Moreover, if
\(\gamma^{\AP_\alpha}_{P,Q}:[0,1]\to\SPD\) denotes the
\(\AP_\alpha\)-geodesic joining \(P\) and \(Q\), then
\[
    \Phi_\alpha\!\left(\gamma^{\AP_\alpha}_{P,Q}(t)\right)
    =
    \gamma^{\AP_{1/2}}_{X,Y}(t),
    \qquad t\in[0,1].
\]
Equivalently,
\[
    \left(\gamma^{\AP_\alpha}_{P,Q}(t)\right)^{2\alpha}
    =
    \gamma^{\AP_{1/2}}_{P^{2\alpha},\,Q^{2\alpha}}(t),
    \qquad t\in[0,1].
\]
Hence the map \(\Phi_\alpha:P\mapsto P^{2\alpha}\) maps
\(\AP_\alpha\)-geodesics to \(\AP_{1/2}\)-geodesics.
\end{lemma}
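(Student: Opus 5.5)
The plan is to show that \(\Phi_\alpha\) is, up to the constant factor \(1/(4\alpha^2)\), a Riemannian isometry from \((\SPD,g^{(\alpha)})\) onto \((\SPD,g^{(1/2)})\). Since a Riemannian isometry maps geodesics to geodesics and preserves distance, and constant rescaling of a metric changes neither geodesics nor their parametrization on \([0,1]\), the claim follows.

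First, \(X,Y\in\SPD\) is immediate, because \(\Phi_\alpha\) acts on the eigenvalues by \(\lambda\mapsto\lambda^{2\alpha}>0\). Moreover \(\Phi_\alpha\) is a diffeomorphism of \(\SPD\) with inverse \(\Phi_{1/(4\alpha)}\), i.e. \(S\mapsto S^{1/(2\alpha)}\).

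Next, I compute the pullback metric using the spectral representation of Theorem~\ref{thm:metric_weights_Malpha}. Let \(P=Q\Lambda Q^\top\) and \(Z\in\Sym(n)\). By Lemma~\ref{lem:DaleckiiKrein} together with orthogonal equivariance,
\[
\widetilde{(D\Phi_\alpha(P)[Z])}_{ij}=\phi_{ij}\widetilde Z_{ij},
\qquad
\phi_{ij}=\frac{\lambda_i^{2\alpha}-\lambda_j^{2\alpha}}{\lambda_i-\lambda_j}\ (i\neq j),
\qquad
\phi_{ii}=2\alpha\lambda_i^{2\alpha-1},
\]
and \(\Phi_\alpha(P)=Q\Lambda^{2\alpha}Q^\top\) has eigenvalues \(\mu_i=\lambda_i^{2\alpha}\) with the same eigenvectors \(Q\).

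Specializing Theorem~\ref{thm:metric_weights_Malpha} to \(\alpha=1/2\) gives the weights \(w^{(1/2)}_{ii}(\mu)=\mu_i^{-1}\) and \(w^{(1/2)}_{ij}(\mu)=4/(\mu_i+\mu_j)\). It then remains to check two identities:
\[
\phi_{ii}^2\,\mu_i^{-1}=4\alpha^2\lambda_i^{2\alpha-2}=4\alpha^2 w_{ii}^{(\alpha)},
\qquad
\phi_{ij}^2\,\frac{4}{\lambda_i^{2\alpha}+\lambda_j^{2\alpha}}=4\alpha^2 w_{ij}^{(\alpha)}.
\]
Both follow directly from \eqref{eq:wii_wij_opt}. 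Hence \(\Phi_\alpha^*g^{(1/2)}=4\alpha^2 g^{(\alpha)}\). As a cross-check, the distance formula \eqref{eq:distance_alpha_nonzero} satisfies \(d^{(1/2)}(P^{2\alpha},Q^{2\alpha})=2|\alpha|\,d^{(\alpha)}(P,Q)\), which is consistent with this scaling.

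Finally, a homothety \(\Phi_\alpha\) with constant factor \(4\alpha^2\) preserves the Levi-Civita connection, so it maps \(g^{(\alpha)}\)-geodesics to \(g^{(1/2)}\)-geodesics with the same affine parameter. It also scales lengths by \(2|\alpha|\), so distance-minimizing curves joining \(P,Q\) correspond exactly to minimizing curves joining \(X,Y\). Therefore \(\Phi_\alpha\circ\gamma^{\AP_\alpha}_{P,Q}\) is the \(\AP_{1/2}\)-geodesic from \(X\) to \(Y\) on \([0,1]\).

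The main obstacle is uniqueness of the minimizing geodesic, which is needed for the notation \(\gamma_{P,Q}\) to be well defined. The BW geometry is not globally uniquely geodesic in every setting, but the homothety argument gives a bijection between minimizing geodesics whatever the choice. So I will state the result for the geodesic selected on either side, e.g. the explicit BW geodesic \(((1-t)X^{1/2}+tX^{-1/2}(X^{1/2}YX^{1/2})^{1/2})^2\), and transport it back via \(\Phi_\alpha^{-1}\).
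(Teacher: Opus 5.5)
Your argument is correct, but it takes a genuinely different route from the paper. The paper never touches the metric. It quotes the closed-form \(\AP_\alpha\)-geodesic from Theorem~8 of \cite{minh2022alpha}, \(\gamma^{\AP_\alpha}_{P,Q}(t)=G(t)^{1/(2\alpha)}\) with \(G(t)=(1-t)^2P^{2\alpha}+t^2Q^{2\alpha}+t(1-t)\{(P^{2\alpha}Q^{2\alpha})^{1/2}+(Q^{2\alpha}P^{2\alpha})^{1/2}\}\). It then notes that after substituting \(X=P^{2\alpha}\), \(Y=Q^{2\alpha}\), this \(G\) is verbatim the BW geodesic from \(X\) to \(Y\).

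You instead prove the structural identity \(\Phi_\alpha^{*}g^{(1/2)}=4\alpha^2g^{(\alpha)}\) from Theorem~\ref{thm:metric_weights_Malpha} and Lemma~\ref{lem:DaleckiiKrein}. Both of your weight identities check out. For \(\lambda_i=\lambda_j\) with \(i\neq j\), read \(\phi_{ij}=2\alpha\lambda_i^{2\alpha-1}\) and take \(w^{(\alpha)}_{ij}\) as its limit \(2\lambda_i^{2\alpha-2}\); both sides then equal \(8\alpha^2\lambda_i^{2\alpha-2}\).

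The same identity also follows in one line from the submersion picture:
\(\Phi_\alpha\circ\pi_\alpha=4\alpha^2\pi_{1/2}\), all \(\pi_\alpha\) share the horizontal space \(\Sym(n)A\), and \(S\mapsto cS\) pulls \(g^{(1/2)}\) back to \(c\,g^{(1/2)}\).

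The two routes trade off as follows. The paper's proof is two lines, but it rests entirely on the external formula and only concerns the specific curve that formula produces. Yours is self-contained within the paper and applies to every geodesic, not just the minimizing one. It also yields more:
the distance relation \(d^{(1/2)}(P^{2\alpha},Q^{2\alpha})=2|\alpha|\,d^{(\alpha)}(P,Q)\);
Minh's geodesic formula as a corollary of the BW one;
Proposition~\ref{prop:transfer-geodesic-convexity} immediately;
and even Proposition~\ref{prop:ap_nonnegative_curvature}, since the homothety gives \(K^{(\alpha)}(P;\sigma)=4\alpha^2K^{(1/2)}\bigl(P^{2\alpha};D\Phi_\alpha(P)\sigma\bigr)\ge 0\).

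There is one slip in your last paragraph. The displayed BW geodesic \(\bigl((1-t)X^{1/2}+tX^{-1/2}(X^{1/2}YX^{1/2})^{1/2}\bigr)^2\) is wrong. Set \(R:=(X^{1/2}YX^{1/2})^{1/2}\) and \(M_t:=(1-t)X^{1/2}+tX^{-1/2}R\). Then \(M_t\) is not symmetric in general, so \(M_t^2\) need not even be symmetric, and \(M_1^2=X^{-1/2}RX^{-1/2}R\neq Y\). The correct curve is \(M_tM_t^\top=((1-t)I+tT)X((1-t)I+tT)\) with \(T=X^{-1/2}RX^{-1/2}\). This expands to the paper's \(G(t)\) because \(XT=(XY)^{1/2}\) and \(TX=(YX)^{1/2}\).

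Your uniqueness hedge is also unnecessary, though harmless given your bijection remark. Minimizing BW geodesics between SPD matrices are unique, because they come from the unique Procrustes alignment of invertible matrices. Your homothety then transports this uniqueness to every \(\AP_\alpha\).
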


\begin{proof}
By the explicit \(\AP_\alpha\)-geodesic formula by Theorem~8 of \cite{minh2022alpha}, we have
\[
    \gamma^{\AP_\alpha}_{P,Q}(t)
    =
    G(t)^{1/(2\alpha)},
\]
where
\[
\begin{aligned}
    G(t)
    :=
        &(1-t)^2P^{2\alpha}
        +t^2Q^{2\alpha}                                      \\
        &\quad
        +t(1-t)
        \left\{
            (P^{2\alpha}Q^{2\alpha})^{1/2}
            +
            (Q^{2\alpha}P^{2\alpha})^{1/2}
        \right\}.
\end{aligned}
\]
Using \(X=P^{2\alpha}\) and \(Y=Q^{2\alpha}\), this can be written as
\[
\begin{aligned}
    G(t)
    =
        &(1-t)^2X+t^2Y                                      \\
        &\quad
        +t(1-t)
        \left\{
            (XY)^{1/2}
            +
            (YX)^{1/2}
        \right\}.
\end{aligned}
\]
For \(\alpha=1/2\), the Alpha-Procrustes geodesic from \(X\) to \(Y\) is
precisely
\[
    \gamma^{\AP_{1/2}}_{X,Y}(t)=G(t).
\]
Therefore,
\[
    \left(\gamma^{\AP_\alpha}_{P,Q}(t)\right)^{2\alpha}
    =
    G(t)
    =
    \gamma^{\AP_{1/2}}_{X,Y}(t).
\]
Since \(\Phi_\alpha(P)=P^{2\alpha}\), this is equivalent to
\[
    \Phi_\alpha\bigl(\gamma^{\AP_\alpha}_{P,Q}(t)\bigr)
    =
    \gamma^{\AP_{1/2}}_{X,Y}(t).
\]
Substituting back \(X=P^{2\alpha}\) and \(Y=Q^{2\alpha}\) gives
\[
    \left(\gamma^{\AP_\alpha}_{P,Q}(t)\right)^{2\alpha}
    =
    \gamma^{\AP_{1/2}}_{P^{2\alpha},Q^{2\alpha}}(t).
\]
This proves the lemma.
\end{proof}

The geodesic transformation in Lemma~\ref{lem:power-geodesics} allows us to
compare geodesic convexity under different Alpha-Procrustes geometries. This
leads to the following proposition.

\begin{proposition}[Geodesic convexity transfer across Alpha-Procrustes geometries]
\label{prop:transfer-geodesic-convexity}
Let \(\alpha\neq 0\), and let \(h:\SPD\to\mathbb{R}\) be a function. Define
\[
    F_\alpha:\SPD\to\mathbb{R},
    \qquad
    F_\alpha(P):=h(P^{2\alpha}).
\]
Then \(h\) is geodesically convex with respect to the
\(\AP_{1/2}\)-geometry if and only if \(F_\alpha\) is geodesically convex
with respect to the \(\AP_\alpha\)-geometry.
\end{proposition}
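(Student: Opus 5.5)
The plan is to reduce everything to Lemma~\ref{lem:power-geodesics} together with the fact that \(\Phi_\alpha:P\mapsto P^{2\alpha}\) is a bijection of \(\SPD\) onto itself, with inverse \(X\mapsto X^{1/(2\alpha)}\); this holds because \(\alpha\neq 0\) and both maps are classical matrix functions of SPD matrices. The definition of geodesic convexity requires the domain to be geodesically convex, so the first step is to note that \(\SPD\) is geodesically convex under both geometries. By the explicit formula quoted in the proof of Lemma~\ref{lem:power-geodesics}, the \(\AP_\alpha\)-geodesic \(\gamma^{\AP_\alpha}_{P,Q}\) stays in \(\SPD\). I will take this curve as the distance-minimizing geodesic referred to in the definition, exactly as the lemma does.

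\emph{Forward direction.} Assume \(h\) is geodesically convex for \(\AP_{1/2}\). Fix \(P,Q\in\SPD\) and \(t\in[0,1]\), and set \(X=P^{2\alpha}\), \(Y=Q^{2\alpha}\). Lemma~\ref{lem:power-geodesics} gives
\[
F_\alpha\bigl(\gamma^{\AP_\alpha}_{P,Q}(t)\bigr)=h\bigl((\gamma^{\AP_\alpha}_{P,Q}(t))^{2\alpha}\bigr)=h\bigl(\gamma^{\AP_{1/2}}_{X,Y}(t)\bigr).
\]
By convexity of \(h\), the right-hand side is at most \((1-t)h(X)+t\,h(Y)=(1-t)F_\alpha(P)+tF_\alpha(Q)\).

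\emph{Converse direction.} Assume \(F_\alpha\) is \(\AP_\alpha\)-geodesically convex. Take arbitrary \(X,Y\in\SPD\) and set \(P=X^{1/(2\alpha)}\), \(Q=Y^{1/(2\alpha)}\), so that \(P^{2\alpha}=X\) and \(Q^{2\alpha}=Y\). This uses surjectivity of \(\Phi_\alpha\), and it is the only place where \(\alpha\neq0\) matters beyond the lemma. Reading the same chain of identities backwards gives
\[
h\bigl(\gamma^{\AP_{1/2}}_{X,Y}(t)\bigr)=F_\alpha\bigl(\gamma^{\AP_\alpha}_{P,Q}(t)\bigr)\le (1-t)F_\alpha(P)+tF_\alpha(Q)=(1-t)h(X)+t\,h(Y).
\]

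\emph{Main obstacle.} The only delicate point is to make sure the geodesic in the convexity definition is the same curve that Lemma~\ref{lem:power-geodesics} describes. That is, the curve \(G(t)^{1/(2\alpha)}\) from Theorem~8 of \cite{minh2022alpha} must be the (unique) distance-minimizing \(\AP_\alpha\)-geodesic between \(P\) and \(Q\). I would cite \cite{minh2022alpha} for this minimizing property, in the same way the lemma does. A cross-check is that \(\Phi_\alpha\) scales distances: comparing \eqref{eq:distance_alpha_nonzero} at \(\alpha\) and at \(1/2\) gives \(d^{(\alpha)}(P,Q)=\tfrac{1}{2|\alpha|}\,d^{(1/2)}(P^{2\alpha},Q^{2\alpha})\), because \((P^{\alpha}Q^{2\alpha}P^{\alpha})^{1/2}=(X^{1/2}YX^{1/2})^{1/2}\). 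Hence \(\Phi_\alpha\) is a homothety between the two distance structures. A homothety maps minimizing geodesics bijectively to minimizing geodesics with the same affine parametrization, so the correspondence holds regardless of which minimizer is chosen, and the argument goes through.
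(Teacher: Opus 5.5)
Your proposal is correct and matches the paper's proof: both directions reduce to Lemma~\ref{lem:power-geodesics} plus the convexity inequality, and the converse uses surjectivity of \(P\mapsto P^{2\alpha}\) via \(P=X^{1/(2\alpha)}\), \(Q=Y^{1/(2\alpha)}\). Your extra observation that \(d^{(\alpha)}(P,Q)=\tfrac{1}{2|\alpha|}\,d^{(1/2)}(P^{2\alpha},Q^{2\alpha})\) is a useful supplement: it justifies identifying the lemma's curve with the distance-minimizing geodesic in the definition, which the paper leaves implicit.
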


\begin{proof}
We first prove the forward implication. Suppose that \(h\) is geodesically
convex with respect to the \(\AP_{1/2}\)-geometry.

Let \(P,Q\in\SPD\), and let
\[
    \gamma^{\AP_\alpha}_{P,Q}(t),
    \qquad t\in[0,1],
\]
be the \(\AP_\alpha\)-geodesic joining \(P\) and \(Q\). Define
\[
    X=P^{2\alpha},
    \qquad
    Y=Q^{2\alpha}.
\]
Since \(P,Q\in\SPD\), we have
\[
    X,Y\in\SPD.
\]
By Lemma~\ref{lem:power-geodesics}, we have
\[
    \left(\gamma^{\AP_\alpha}_{P,Q}(t)\right)^{2\alpha}
    =
    \gamma^{\AP_{1/2}}_{X,Y}(t)
    =
    \gamma^{\AP_{1/2}}_{P^{2\alpha},Q^{2\alpha}}(t).
\]
Therefore,
\[
\begin{aligned}
    F_\alpha\left(\gamma^{\AP_\alpha}_{P,Q}(t)\right)
    &=
    h\left(
        \left(\gamma^{\AP_\alpha}_{P,Q}(t)\right)^{2\alpha}
    \right)                                                   \\
    &=
    h\left(
        \gamma^{\AP_{1/2}}_{X,Y}(t)
    \right).
\end{aligned}
\]
Since \(h\) is geodesically convex under the \(\AP_{1/2}\)-geometry,
\[
    h\left(
        \gamma^{\AP_{1/2}}_{X,Y}(t)
    \right)
    \le
    (1-t)h(X)+t h(Y).
\]
Using \(X=P^{2\alpha}\), \(Y=Q^{2\alpha}\), and
\(F_\alpha(P)=h(P^{2\alpha})\), we obtain
\[
\begin{aligned}
    F_\alpha\left(\gamma^{\AP_\alpha}_{P,Q}(t)\right)
    &\le
    (1-t)h(P^{2\alpha})+t h(Q^{2\alpha})                     \\
    &=
    (1-t)F_\alpha(P)+tF_\alpha(Q).
\end{aligned}
\]
Hence \(F_\alpha\) is geodesically convex with respect to the
\(\AP_\alpha\)-geometry.

Conversely, suppose that \(F_\alpha\) is geodesically convex with respect to
the \(\AP_\alpha\)-geometry. We prove that \(h\) is geodesically convex with
respect to the \(\AP_{1/2}\)-geometry.

Let
\[
    X,Y\in\SPD,
\]
and let
\[
    \gamma^{\AP_{1/2}}_{X,Y}(t),
    \qquad t\in[0,1],
\]
be the \(\AP_{1/2}\)-geodesic joining \(X\) and \(Y\). Define
\[
    P=X^{1/(2\alpha)},
    \qquad
    Q=Y^{1/(2\alpha)}.
\]
Since \(X,Y\in\SPD\), we have
\[
    P,Q\in\SPD.
\]
Moreover,
\[
    X=P^{2\alpha},
    \qquad
    Y=Q^{2\alpha}.
\]
By Lemma~\ref{lem:power-geodesics}, we have,
\[
    \gamma^{\AP_\alpha}_{P,Q}(t)
    =
    \left(
        \gamma^{\AP_{1/2}}_{X,Y}(t)
    \right)^{1/(2\alpha)}.
\]
Therefore,
\[
\begin{aligned}
    F_\alpha\left(
        \gamma^{\AP_\alpha}_{P,Q}(t)
    \right)
    &=
    F_\alpha\left(
        \left(
            \gamma^{\AP_{1/2}}_{X,Y}(t)
        \right)^{1/(2\alpha)}
    \right)                                                   \\
    &=
    h\left(
        \gamma^{\AP_{1/2}}_{X,Y}(t)
    \right).
\end{aligned}
\]
Since \(F_\alpha\) is geodesically convex under the \(\AP_\alpha\)-geometry,
\[
    F_\alpha\left(
        \gamma^{\AP_\alpha}_{P,Q}(t)
    \right)
    \le
    (1-t)F_\alpha(P)+tF_\alpha(Q).
\]
Using \(P=X^{1/(2\alpha)}\), \(Q=Y^{1/(2\alpha)}\), and
\(F_\alpha(P)=h(P^{2\alpha})\), we obtain
\[
\begin{aligned}
    h\left(\gamma^{\AP_{1/2}}_{X,Y}(t)\right)
    &\le
    (1-t)F_\alpha(P)+tF_\alpha(Q)                             \\
    &=
    (1-t)h(P^{2\alpha})+t h(Q^{2\alpha})                       \\
    &=
    (1-t)h(X)+t h(Y).
\end{aligned}
\]
Therefore \(h\) is geodesically convex with respect to the
\(\AP_{1/2}\)-geometry.
\end{proof}

As a direct consequence of Proposition~\ref{prop:transfer-geodesic-convexity},
known geodesic convexity results under the \(\AP_{1/2}\) \((\mathrm{BW})\)
geometry immediately yields new geodesic convexity results under the
\(\AP_\alpha\)-geometry. We summarize several representative examples below.

\begin{corollary}[Examples of transferred geodesic convexity]
\label{cor:examples-transferred-convexity}
Let \(A\succeq 0\) and \(\alpha\neq 0\). Suppose that the functions
\(h_i:\SPD\to\mathbb{R}\), \(i=1,2,3\), defined by
\[
    h_1(P):=\tr(AP),
    \qquad
    h_2(P):=\tr(PAP),
    \qquad
    h_3(P):=-\log\det P,
    \qquad P\in\SPD,
\]
are geodesically convex with respect to the \(\AP_{1/2}\) \((\mathrm{BW})\)
geometry \cite{han2021riemannian}. Then the corresponding transformed functions
\(F_{i,\alpha}:\SPD\to\mathbb{R}\), \(i=1,2,3\), given by
\[
    F_{1,\alpha}(P):=\tr(AP^{2\alpha}),
    \qquad
    F_{2,\alpha}(P):=\tr(P^{2\alpha}AP^{2\alpha}),
    \qquad
    F_{3,\alpha}(P):=-\log\det(P^{2\alpha}),
\]
are geodesically convex with respect to the \(\AP_\alpha\)-geometry.
\end{corollary}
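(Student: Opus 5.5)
The corollary follows directly from Proposition~\ref{prop:transfer-geodesic-convexity}, applied once to each $h_i$. The hypothesis already grants that $h_1,h_2,h_3$ are geodesically convex under $\AP_{1/2}$, as established in \cite{han2021riemannian}. Each $h_i$ is a real-valued function on $\SPD$, so it fits the setting of the proposition. Fix $\alpha\neq 0$. The forward implication of Proposition~\ref{prop:transfer-geodesic-convexity} then states that $P\mapsto h_i(P^{2\alpha})$ is geodesically convex under $\AP_\alpha$.

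What remains is to check that $h_i(P^{2\alpha})$ agrees with the stated $F_{i,\alpha}$. This is substitution:
\[
h_1(P^{2\alpha})=\tr(AP^{2\alpha}),\qquad
h_2(P^{2\alpha})=\tr(P^{2\alpha}AP^{2\alpha}),\qquad
h_3(P^{2\alpha})=-\log\det(P^{2\alpha}).
\]
For the third function I will also note the simplification $-\log\det(P^{2\alpha})=-2\alpha\log\det P$. It is not needed for the argument.

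The only point needing care is whether the proposition's hypotheses actually hold. Two things must be confirmed. First, $P^{2\alpha}\in\SPD$ for every real $\alpha$, which holds by the spectral definition of matrix powers, so $F_{i,\alpha}$ is well defined on all of $\SPD$. Second, the convexity notion in \cite{han2021riemannian} must match the one used here: geodesic convexity along the minimizing $\AP_{1/2}$ geodesic, with the geodesic formula of Lemma~\ref{lem:power-geodesics}. Once these agree, the proof is immediate, with no real obstacle beyond quoting the proposition correctly.
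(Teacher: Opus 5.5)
Your proposal is correct and matches the paper, which presents the corollary as a direct consequence of Proposition~\ref{prop:transfer-geodesic-convexity}: apply its forward implication to each $h_i$ and substitute $P^{2\alpha}$, exactly as you do. Your second caveat is unnecessary, since the corollary assumes geodesic convexity of the $h_i$ under $\AP_{1/2}$ in the paper's own sense, so no matching with \cite{han2021riemannian} is needed.
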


\section{Experiments} \label{sec7}
In this section, we empirically evaluate the performance of optimization algorithms by comparing them under different Riemannian geometries across several problems. In addition to the AP geometry, we also include the affine-invariant (AI) geometry in our experiments. 
The AI geometry is one of the most widely studied Riemannian structures on $\SPD(n)$. It is defined by the Riemannian metric
\[
g^{\mathrm{AI}}_P(X,Y)=\mathrm{tr}\!\left(P^{-1} X P^{-1} Y\right),
\qquad
P\in\SPD(n),\; X,Y\in T_P\SPD(n).
\]
It has been observed that the BW metric is often better suited for optimizing ill-conditioned SPD matrices than the AI metric \cite{han2021riemannian}.

We present convergence mainly in terms of the distance to the solution \(P^\star\) whenever applicable.
The distance is measured by the Frobenius norm, i.e., \(\|P_t - P^\star\|_F\).
We initialize the algorithms with the identity matrix for all the metrics.
Moreover, we report experimental results for both the RSD method and the RTR method.
For the RTR method, the trust-region subproblem is approximately solved by the truncated conjugate gradient (tCG) method. For all methods, we use the stopping criterion
$\|\nabla f(P_k)\|_F < 10^{-6}.$ All experiments were conducted on an Apple M2 Max CPU.

\subsection{Weighted Least Squares}
We consider the weighted least-squares problem on the SPD manifold
\begin{equation}
\label{eq:wls_spd}
\min_{P \in \SPD(n)} 
f(P)
=
\frac12 \| A \odot P - B \|_F^2,
\end{equation}
where $\odot$ denotes the Hadamard (elementwise) product and 
$A,B\in\Sym(n)$ are given weight and target matrices, respectively.
Since $f$ is quadratic in $P$, its Euclidean gradient and Hessian are
\begin{equation}
\nabla f(P)=(A \odot P - B)\odot A,
\qquad
\nabla^2 f(P)[U]=A \odot U \odot A,
\quad U\in\Sym(n).
\end{equation}
We consider the weighted least-squares problem with
\[
A=\mathbf{1}_n\mathbf{1}_n^\top,
\qquad
B=A\odot P^\star
\]
and study two spectral regimes: a low-condition-number case with $\kappa(P^\star)=10$ and a high-condition-number case with $\kappa(P^\star)=10^4$.

For both RSD and RTR, the target matrix $P^\star\in\SPD(50)$ is generated as
\[
P^\star = Q\,\mathrm{diag}(\lambda_1,\ldots,\lambda_{50})\,Q^\top,
\]
where $Q$ is a random orthogonal matrix.
For the RSD experiments, the eigenvalues are chosen as
\[
\lambda_i
=
\exp\!\left(
-\frac{(i-1)\log \kappa(P^\star)}{49}
\right),
\qquad i=1,\ldots,50,
\]
which yields an exponentially decaying spectrum from $1$ to $\kappa(P^\star)^{-1}$.
For the RTR experiments, the eigenvalues of $P^\star$ are chosen by geometric interpolation between $\kappa(P^\star)^{1/2}$ and $\kappa(P^\star)^{-1/2}$, where each $\lambda_i$ is given as
\[
\lambda_i
=
\kappa(P^\star)^{1/2}
\left(\kappa(P^\star)^{-1}\right)^{\frac{i-1}{49}},
\qquad i=1,\ldots,50.
\]
The maximum number of iterations is set to $k_{\max}=200$ for both RSD and RTR.

Figure~\ref{fig:wls_rsd_rtr_all} compares the performance of RSD and RTR under different Riemannian metrics. For RSD, the metric with \(\alpha=1\) yields the fastest convergence in both the low- and high-condition-number settings, and its advantage becomes more pronounced when the condition number of \(P^\ast\) is large. For RTR, the same trend is observed: \(\alpha=1\) converges in the smallest number of iterations and remains the most robust choice as the condition number increases. Table~\ref{tab:wls_rsd_rtr_summary} further shows that, as the condition number increases, the metric with \(\alpha=1\) is also superior in terms of both iteration count and runtime.

\begin{figure*}[t]
    \centering
    \captionsetup{font=small}

    \begin{subfigure}[t]{0.48\textwidth}
        \centering
        \includegraphics[width=\textwidth]{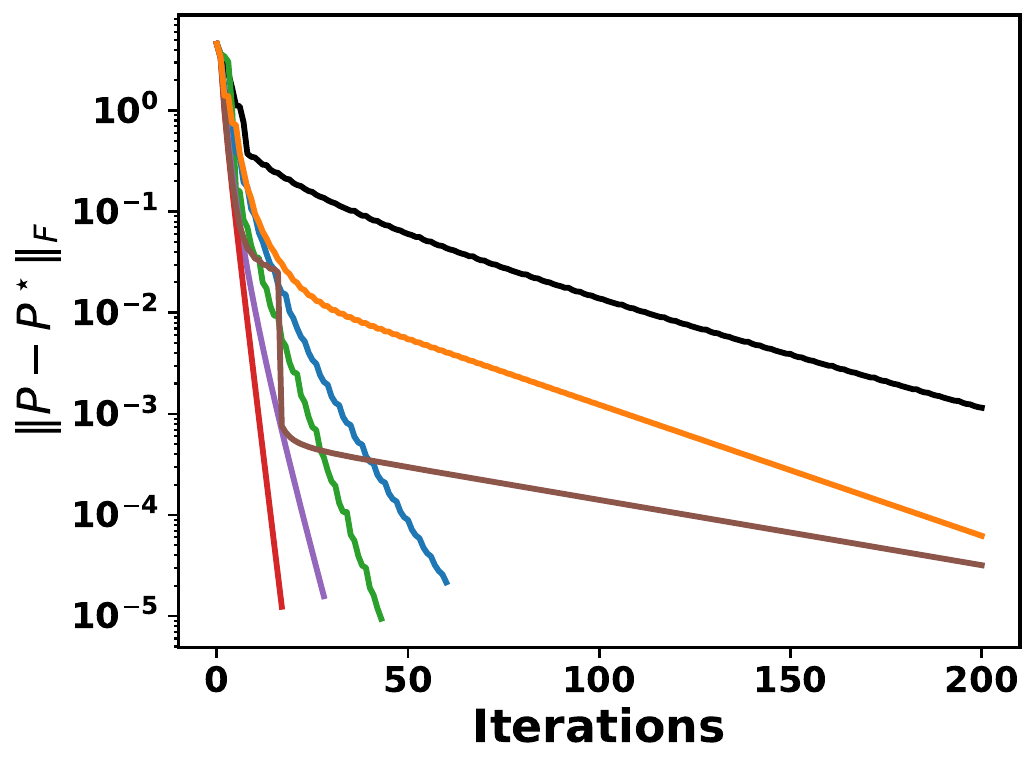}
        \caption{RSD, $\kappa(P^\star)=10^1$.}
    \end{subfigure}
    \hfill
    \begin{subfigure}[t]{0.48\textwidth}
        \centering
        \includegraphics[width=\textwidth]{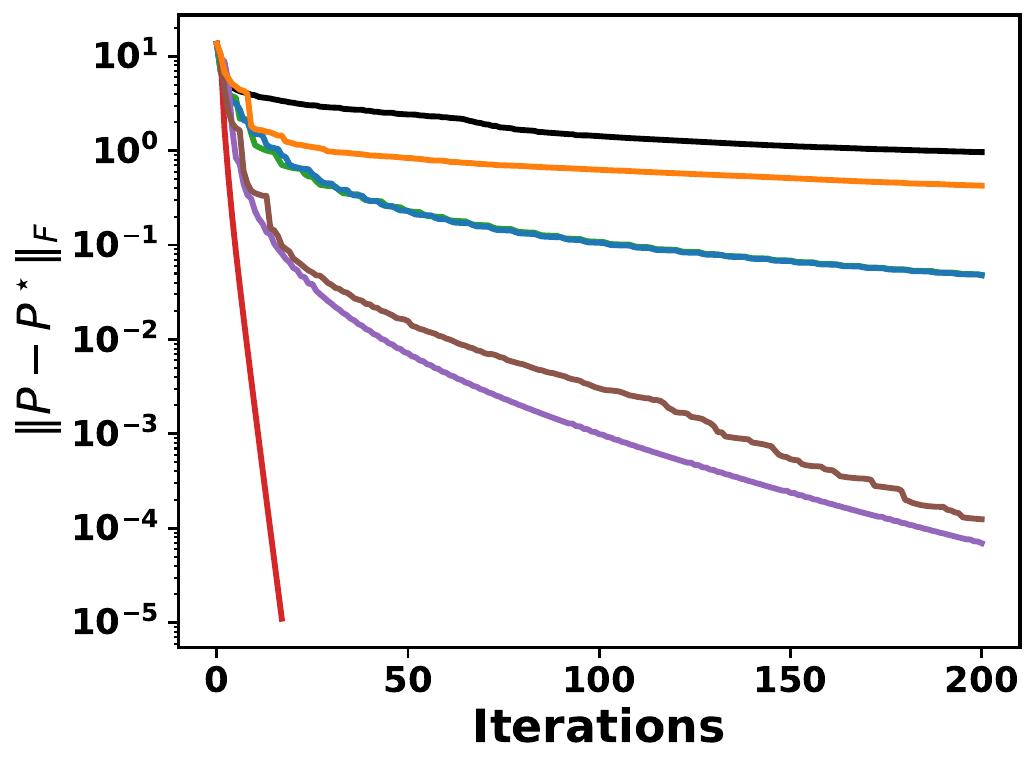}
        \caption{RSD, $\kappa(P^\star)=10^4$.}
    \end{subfigure}

    \vspace{0.6em}

    \begin{subfigure}[t]{0.48\textwidth}
        \centering
        \includegraphics[width=\textwidth]{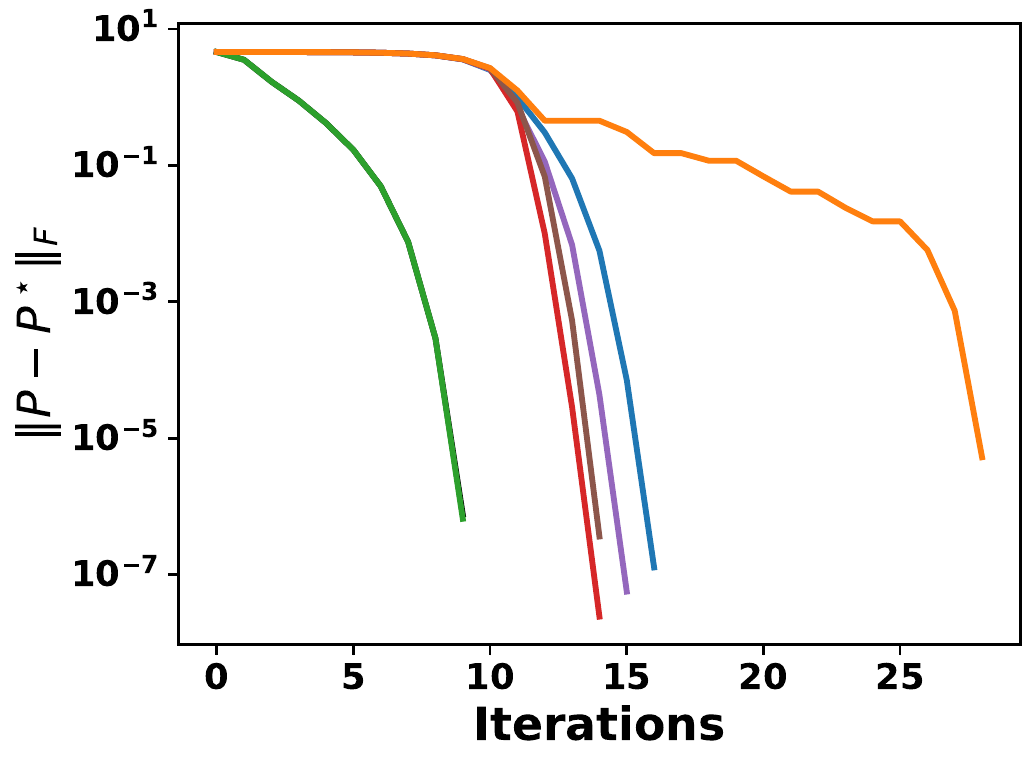}
        \caption{RTR, $\kappa(P^\star)=10^1$.}
    \end{subfigure}
    \hfill
    \begin{subfigure}[t]{0.48\textwidth}
        \centering
        \includegraphics[width=\textwidth]{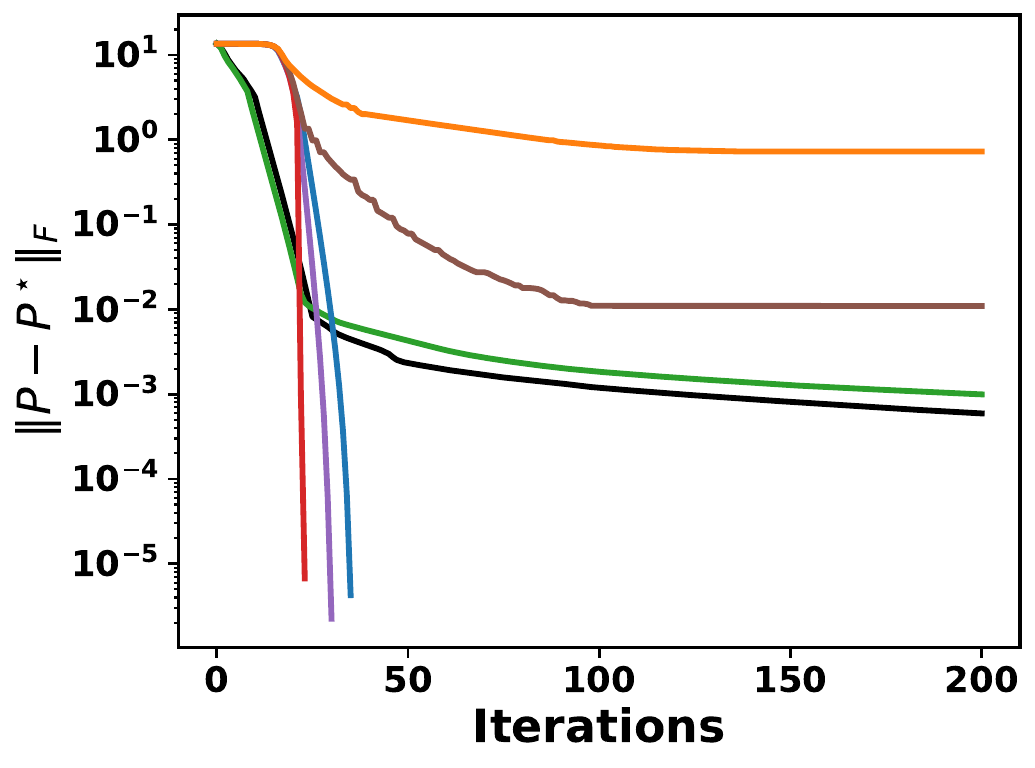}
        \caption{RTR, $\kappa(P^\star)=10^4$.}
    \end{subfigure}


        \includegraphics[width=0.95\textwidth]{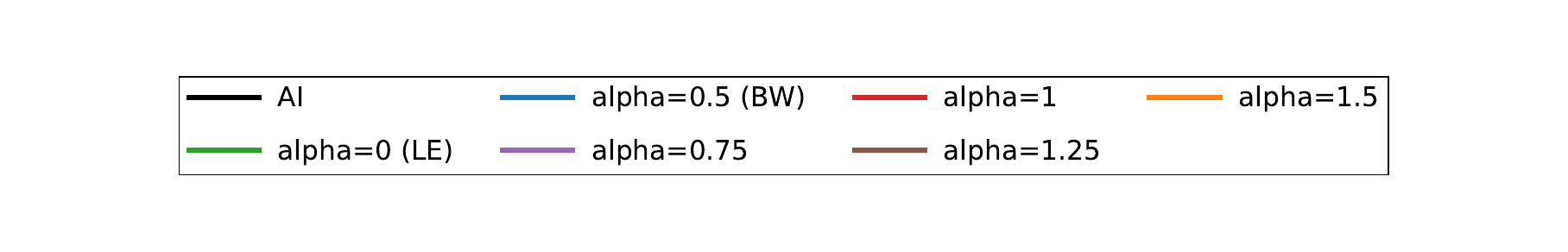}


    \caption{Convergence curves for the weighted least-squares problem on $\SPD(n)$. The first row shows the RSD results for the dense case under low and high condition numbers, while the second row shows the corresponding RTR results.}
    \label{fig:wls_rsd_rtr_all}
\end{figure*}

\begin{table*}[t]
\centering
\caption{Summary of RSD and RTR results for the weighted least-squares problem on \(\SPD(n)\).
For each setting, we report the iteration count and runtime.
The best result in each block is underlined.}
\label{tab:wls_rsd_rtr_summary}
\resizebox{\textwidth}{!}{
\begin{tabular}{l rr rr rr rr}
\toprule
\multirow{2}{*}{metric}
& \multicolumn{2}{c}{\(\mathrm{RTR},\ \kappa(P^\star)=10\)}
& \multicolumn{2}{c}{\(\mathrm{RTR},\ \kappa(P^\star)=10^4\)}
& \multicolumn{2}{c}{\(\mathrm{RSD},\ \kappa(P^\star)=10\)}
& \multicolumn{2}{c}{\(\mathrm{RSD},\ \kappa(P^\star)=10^4\)} \\
\cmidrule(lr){2-3}\cmidrule(lr){4-5}\cmidrule(lr){6-7}\cmidrule(lr){8-9}
& \#iter & time (s)
& \#iter & time (s)
& \#iter & time (s)
& \#iter & time (s) \\
\midrule
AI
& \underline{9} & \underline{0.193}
& 200 & 46.484
& 200 & 0.300
& 200 & 0.618 \\

LE (\(\alpha=0\))
& \underline{9} & 0.402
& 200 & 84.357
& 43 & 0.030
& 200 & 0.263 \\

BW (\(\alpha=0.5\))
& 16 & 0.265
& 35 & 2.494
& 60 & 0.182
& 200 & 0.885 \\

\(\alpha=0.75\)
& 15 & 0.287
& 30 & 1.136
& 28 & 0.069
& 200 & 0.599 \\

\(\alpha=1\)
& 14 & 0.235
& \underline{23} & \underline{0.341}
& \underline{17} & \underline{0.042}
& \underline{17} & \underline{0.042} \\

\(\alpha=1.25\)
& 14 & 0.264
& 200 & 2.314
& 200 & 0.505
& 200 & 1.162 \\

\(\alpha=1.5\)
& 28 & 0.759
& 200 & 2.073
& 200 & 0.708
& 200 & 1.434 \\
\bottomrule
\end{tabular}}
\end{table*}

\subsection{Trace Regression}
Next, we consider trace regression on the SPD manifold with rank-one sensing matrices \(A_i = a_i a_i^\top\), where \(a_i \in \mathbb{R}^n\). The problem can be written as
\begin{equation}
\label{eq:tr_spd}
\min_{P\in\SPD(n)}
f(P)
\;=\;
\frac{1}{2m}\sum_{i=1}^m
\bigl(\tr(A_i P)-y_i\bigr)^2
\;=\;
\frac{1}{2m}\sum_{i=1}^m
\bigl(a_i^\top P a_i-y_i\bigr)^2.
\end{equation}
Letting the residual
$r_i(P):=\tr(A_iP)-y_i$ $i=1,\dots,m$,
the Euclidean gradient of \(f\) is given by
\begin{equation}
\label{eq:tr_egrad}
\nabla f(P)
=
\frac{1}{m}\sum_{i=1}^m r_i(P)\,A_i
=
\frac{1}{m}\sum_{i=1}^m r_i(P)\,a_i a_i^\top .
\end{equation}
The Euclidean Hessian is
defined for \(U\in\Sym(n)\) by
\begin{equation}
\label{eq:tr_ehess}
\nabla^2 f(P)[U]
=
\frac{1}{m}\sum_{i=1}^m \langle A_i,U\rangle\,A_i
=
\frac{1}{m}\sum_{i=1}^m \bigl(a_i^\top U a_i\bigr)\,a_i a_i^\top.
\end{equation}
In the experiments, the sensing matrices are generated as independent rank-one Wishart matrices:
\begin{equation}
\label{eq:tr_Ai}
A_i = a_i a_i^\top,
\qquad
a_i \overset{\mathrm{i.i.d.}}{\sim} \mathcal N(0,I_n),
\qquad i=1,\dots,m,
\end{equation}
and the responses are generated according to the noisy observation model
\begin{equation}
\label{eq:tr_y}
y_i
=
a_i^\top P^\star a_i
+
\sigma \varepsilon_i,
\qquad
\varepsilon_i \overset{\mathrm{i.i.d.}}{\sim} \mathcal N(0,0.01),
\qquad i=1,\dots,m.
\end{equation}
The matrix \(P^\star \in \SPD(n)\) is generated as
\begin{equation}
\label{eq:tr_pstar}
P^\star
=
Q \,\diag(\lambda_1,\dots,\lambda_n)\, Q^\top,
\end{equation}
where \(Q\) is a random orthogonal matrix. In the RSD method, the eigenvalues are chosen by geometric interpolation from \(1\) to \(\kappa(P^\star)^{-1}\), namely,
\begin{equation}
\label{eq:tr_pstar_eigs}
\lambda_i
=
\left(\kappa(P^\star)^{-1}\right)^{\frac{i-1}{n-1}},
\qquad i=1,\dots,n.
\end{equation}
In the RTR method, by contrast, the eigenvalues are chosen by geometric interpolation between \(\kappa(P^\star)^{1/2}\) and \(\kappa(P^\star)^{-1/2}\), namely,
\begin{equation}
\label{eq:tr_pstar_eigs_rtr}
\lambda_i
=
\kappa(P^\star)^{\frac{1}{2}-\frac{i-1}{n-1}},
\qquad i=1,\dots,n.
\end{equation}
The maximum number of iterations is set to $k_{\max}=800$ for RSD and $k_{\max}=400$ for RTR, respectively. The maximum number of iterations is set to \(k_{\max}=800\) for RSD and \(k_{\max}=400\) for RTR.

Results are shown in Figure~\ref{fig:trace_rsd_rtr_all}, which compares the performance of RSD and RTR under different Riemannian metrics. For RSD, the metric with \(\alpha=1\) yields the fastest convergence in both the low- and high-condition-number settings, and its advantage becomes more pronounced when the condition number of \(P^\ast\) is large. For RTR, a similar trend is observed in the high-condition-number regime. When \(\kappa(P^\ast)=10\), several metrics perform competitively, and \(\alpha=1\) is not the fastest choice. However, when the condition number increases to \(\kappa(P^\ast)=10^3\), \(\alpha=1\) clearly becomes the most robust and best-performing choice, requiring the fewest iterations to converge, whereas some other metrics slow down significantly or fail to make sufficient progress within the iteration budget. Table~\ref{tab:trace_rsd_rtr_summary} further confirms that, as the condition number increases, the metric with \(\alpha=1\) is also superior in terms of both iteration count and runtime.

\begin{figure*}[t]
    \centering
    \captionsetup{font=small}

    \begin{subfigure}[t]{0.48\textwidth}
        \centering
        \includegraphics[width=\textwidth]{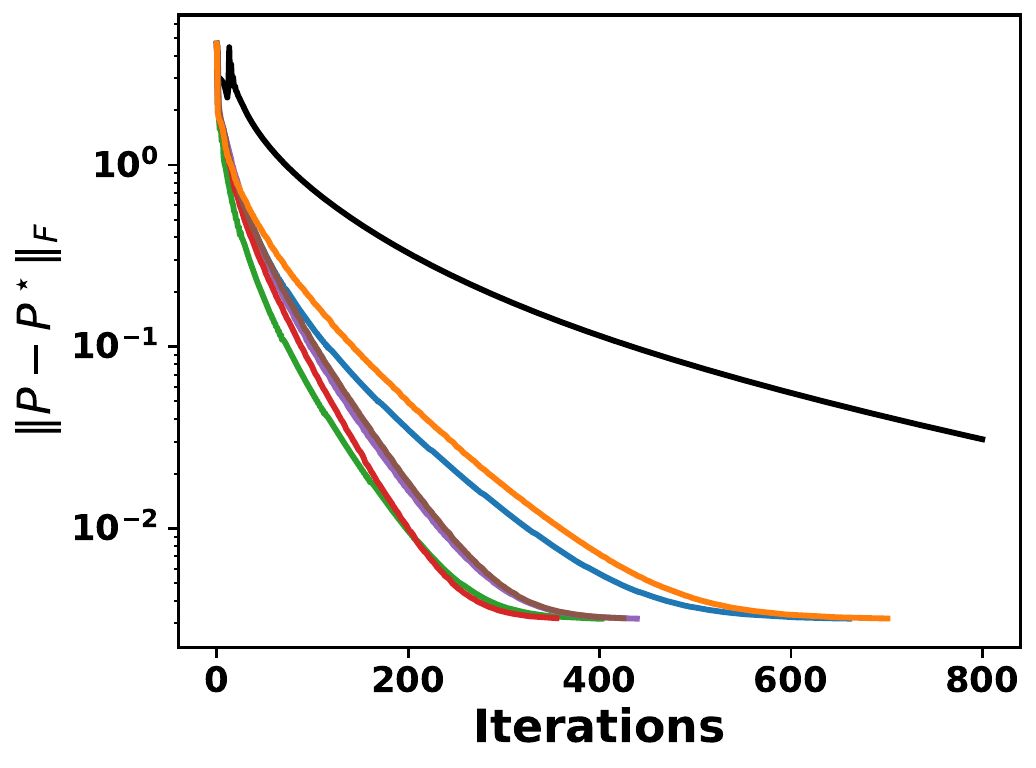}
        \caption{RSD, $\kappa(P^\star)=10^1$}
    \end{subfigure}
    \hfill
    \begin{subfigure}[t]{0.48\textwidth}
        \centering
        \includegraphics[width=\textwidth]{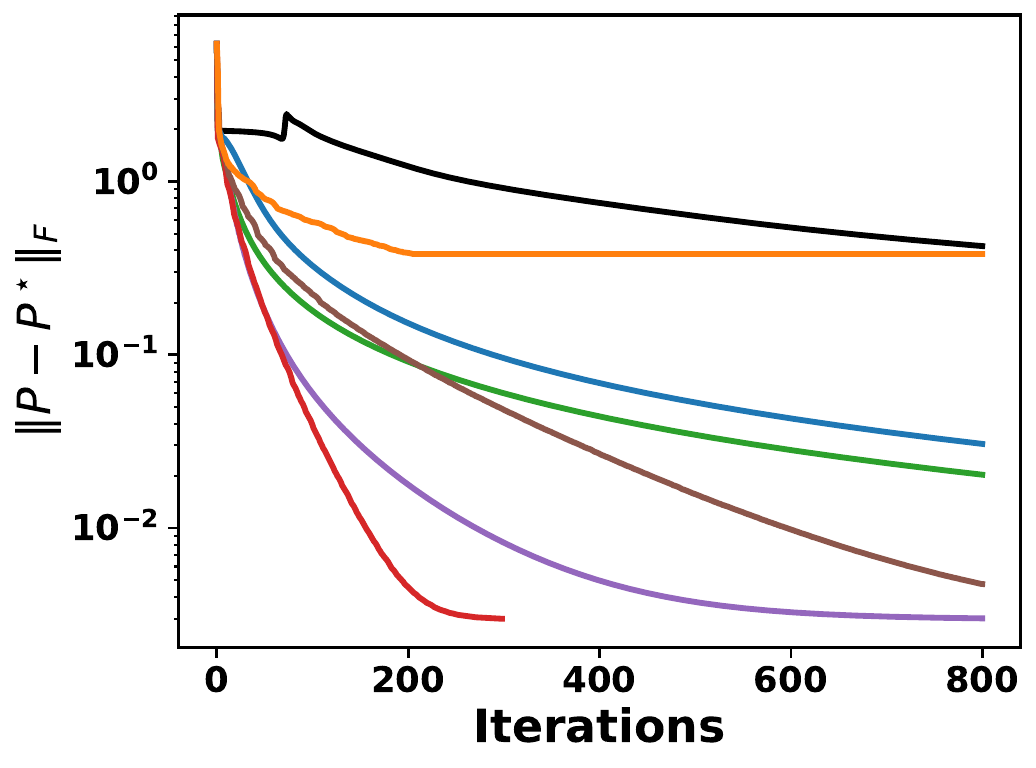}
        \caption{RSD, $\kappa(P^\star)=10^3$}
    \end{subfigure}

    \vspace{0.6em}

    \begin{subfigure}[t]{0.48\textwidth}
        \centering
        \includegraphics[width=\textwidth]{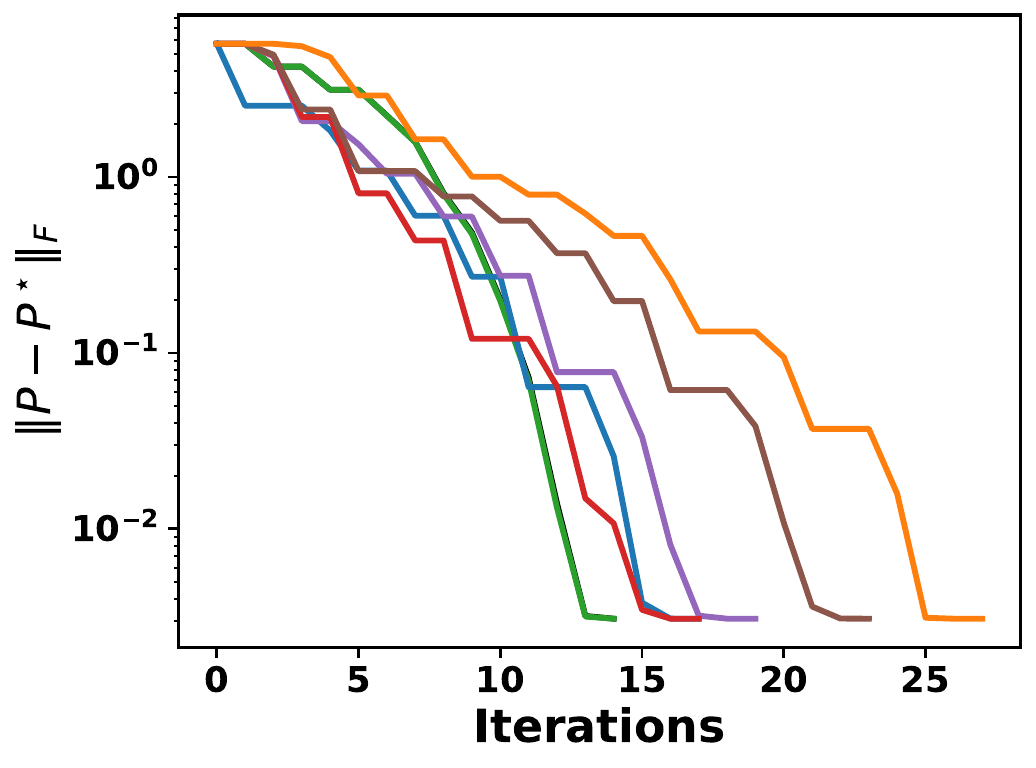}
        \caption{RTR, $\kappa(P^\star)=10^1$}
    \end{subfigure}
    \hfill
    \begin{subfigure}[t]{0.48\textwidth}
        \centering
        \includegraphics[width=\textwidth]{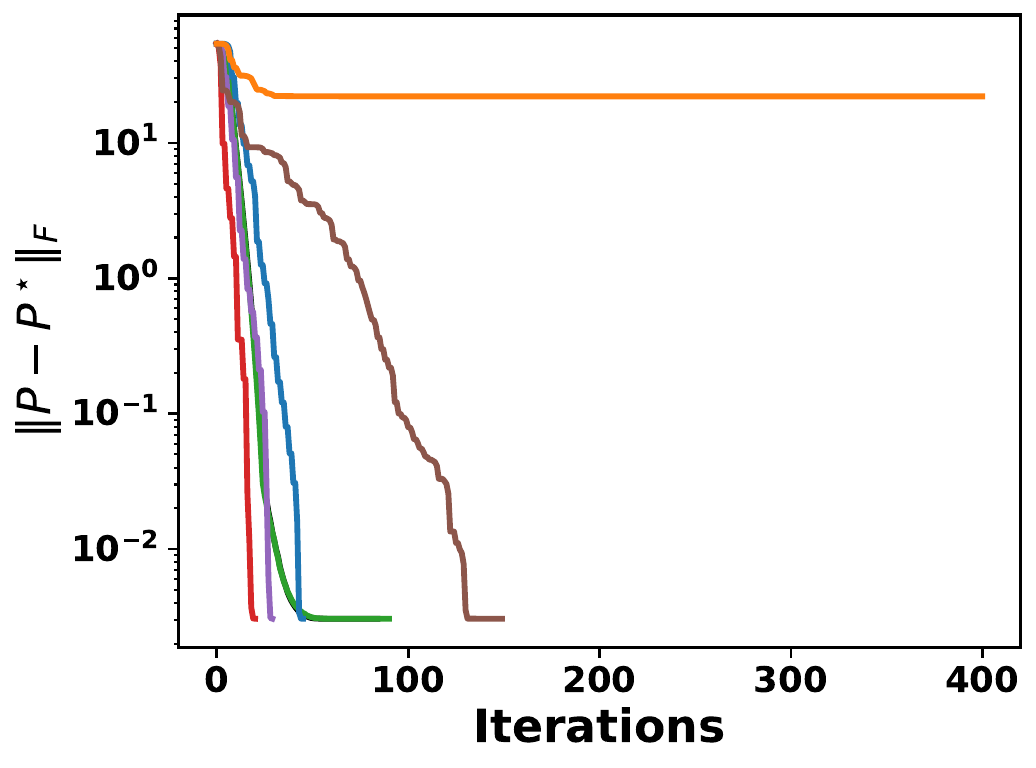}
        \caption{RTR, $\kappa(P^\star)=10^3$}
    \end{subfigure}


        \includegraphics[width=0.95\textwidth]{sylvester_custom_legend_2rows.pdf}


    \caption{Convergence curves for the trace regression problem on $\SPD(n)$ under different condition numbers of the target solution $P^\star$. The first row shows the RSD results, while the second row shows the corresponding RTR results.}
    \label{fig:trace_rsd_rtr_all}
\end{figure*}

\begin{table*}[t]
\centering
\caption{Summary of RSD and RTR results for the trace regression problem on \(\SPD(n)\).
For each setting, we report the iteration count and runtime.
The best result in each block is underlined.}
\label{tab:trace_rsd_rtr_summary}
\resizebox{\textwidth}{!}{
\begin{tabular}{l rr rr rr rr}
\toprule
\multirow{2}{*}{metric}
& \multicolumn{2}{c}{\(\mathrm{RTR},\ \kappa(P^\star)=10\)}
& \multicolumn{2}{c}{\(\mathrm{RTR},\ \kappa(P^\star)=10^3\)}
& \multicolumn{2}{c}{\(\mathrm{RSD},\ \kappa(P^\star)=10\)}
& \multicolumn{2}{c}{\(\mathrm{RSD},\ \kappa(P^\star)=10^3\)} \\
\cmidrule(lr){2-3}\cmidrule(lr){4-5}\cmidrule(lr){6-7}\cmidrule(lr){8-9}
& \#iter & time (s)
& \#iter & time (s)
& \#iter & time (s)
& \#iter & time (s) \\
\midrule
AI
& \underline{14} & \underline{0.560}
& 84 & 27.086
& 800 & 2.006
& 800 & 1.997 \\

LE (\(\alpha=0\))
& \underline{14} & 1.215
& 90 & 53.021
& 396 & 1.824
& 800 & 1.660 \\

BW (\(\alpha=0.5\))
& 17 & 1.010
& 45 & 4.552
& 667 & 2.773
& 800 & 3.307 \\

\(\alpha=0.75\)
& 19 & 1.180
& 29 & 1.364
& 408 & 1.754
& 800 & 3.311 \\

\(\alpha=1\)
& 17 & 0.934
& \underline{20} & \underline{0.689}
& \underline{339} & \underline{1.467}
& \underline{283} & \underline{1.232} \\

\(\alpha=1.25\)
& 19 & 1.056
& 82 & 6.034
& 409 & 1.788
& 800 & 3.453 \\

\(\alpha=1.5\)
& 27 & 1.311
& 400 & 5.541
& 679 & 2.956
& 800 & 3.481 \\
\bottomrule
\end{tabular}}
\end{table*}

\subsection{Sylvester Equation}
Finally, we consider the following convex quadratic optimization problem over the SPD manifold:
\begin{equation}
\label{eq:sylvester_spd}
\min_{P\in\SPD(n)}
f(P)
=
\frac12 \,\langle P, A P + P B\rangle_F
-
\langle C, P\rangle_F,
\end{equation}
where \(A,B\in\SPD(n)\), and \(C\in\Sym(n)\).
Moreover, the Euclidean gradient and Hessian of \(f\) are given by
\begin{align}
\nabla f(P)
&=
A P + P B - C,
\\
\nabla^2 f(P)[U]
&=
A U + U B,
\qquad U\in\Sym(n).
\end{align}
Given \(A\) and \(B\), we choose
\begin{equation}
C = A P^\star + P^\star B,
\end{equation}
so that \(P^\star\) is the unique minimizer of \(f\).

In our experiments, we compare RSD and RTR methods under several metrics on \(\SPD(n)\). For both algorithms, the matrices \(A\) and \(B\) are generated as SPD matrices with prescribed condition numbers. Specifically, they are constructed in the form
\[
A = Q_A \diag(\mu_1,\dots,\mu_n) Q_A^\top,
\qquad
B = Q_B \diag(\nu_1,\dots,\nu_n) Q_B^\top,
\]
where \(\{\mu_i\}\) and \(\{\nu_i\}\) are geometrically distributed eigenvalues. In the RSD experiments, we set \(n=50\) and
$\kappa(A)=\kappa(B)=10^4.$
In the RTR experiments, we instead set \(n=60\) and $\kappa(A)=30, \kappa(B)=20.$

The target optimizer \(P^\star\in\SPD(n)\) is also generated spectrally. In the RSD experiments, it is constructed as
\begin{equation}
\label{eq:sylvester_pstar_rsd}
P^\star
=
Q \diag(\lambda_1,\ldots,\lambda_n) Q^\top,
\qquad
\lambda_i
=
\exp\!\Bigl(
-(i-1)\frac{\log \kappa(P^\star)}{n-1}
\Bigr),
\quad i=1,\dots,n.
\end{equation} 
In the RTR experiments, \(P^\star\) is constructed in a slightly more general form:
\begin{equation}
\label{eq:sylvester_pstar_rtr}
P^\star
=
Q \diag(\lambda_1,\ldots,\lambda_n) Q^\top,
\qquad
\lambda_i
=
c\,\tau^{\,\frac{s}{2}-\,s\frac{i-1}{n-1}},
\quad i=1,\ldots,n.
\end{equation}
where \(\tau>0\) is a prescribed spectral-scaling parameter, \(c>0\) is a center parameter, and \(s>0\) is a stretch parameter. Consequently, $\kappa(P^\star)=\tau^s.$
Unless otherwise specified, we use \(c=1\) and \(s=1.5\) in the RTR experiments. In both cases, once \(P^\star\) is fixed, we define $C = A P^\star + P^\star B,$
which ensures that \(P^\star\) is the unique minimizer of \eqref{eq:sylvester_spd}. The maximum number of iterations is set to \(20000\) for the RSD method and to \(100\) for the RTR method, respectively.

Figure~\ref{fig:sylvester_rsd_rtr_all} compares the performance of RSD and RTR under different Riemannian metrics for the Sylvester equation. When \(\kappa(P^\star)=10\), the metric with \(\alpha=1.5\) converges fastest, while \(\alpha=1\) is also clearly competitive. However, as the condition number increases, the metric with \(\alpha=1\) becomes the best-performing choice.
For RTR, the picture is slightly different in the low-condition-number case. When \(\kappa(P^\star)=10\) and \(10^2\), the metric with \(\alpha=0.75\) attains the smallest iteration count. As the condition number increases further, the advantage shifts toward \(\alpha=1\). 

Tables~\ref{tab:ly_rsd_summary} and~\ref{tab:ly_rtr_summary} quantitatively confirm these observations. In particular, it shows that, although \(\alpha=1\) is not always the best choice in the easiest settings, it becomes the most robust and efficient metric as the condition number increases, especially in terms of both iteration count and runtime in the practically important ill-conditioned regime.

\begin{figure*}[t]
    \centering
    \captionsetup{font=small}

    \begin{subfigure}[t]{0.235\textwidth}
        \centering
        \includegraphics[width=\textwidth]{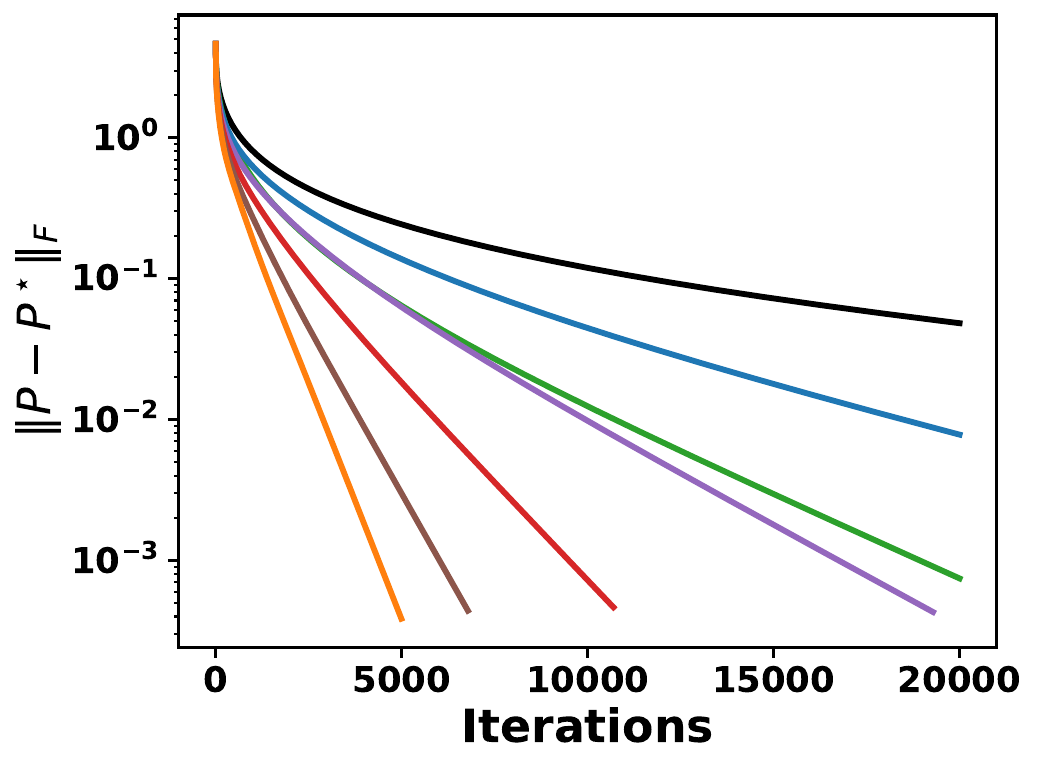}
        \caption{RSD, $\kappa(P^\star)=10^1$}
    \end{subfigure}
    \hfill
    \begin{subfigure}[t]{0.235\textwidth}
        \centering
        \includegraphics[width=\textwidth]{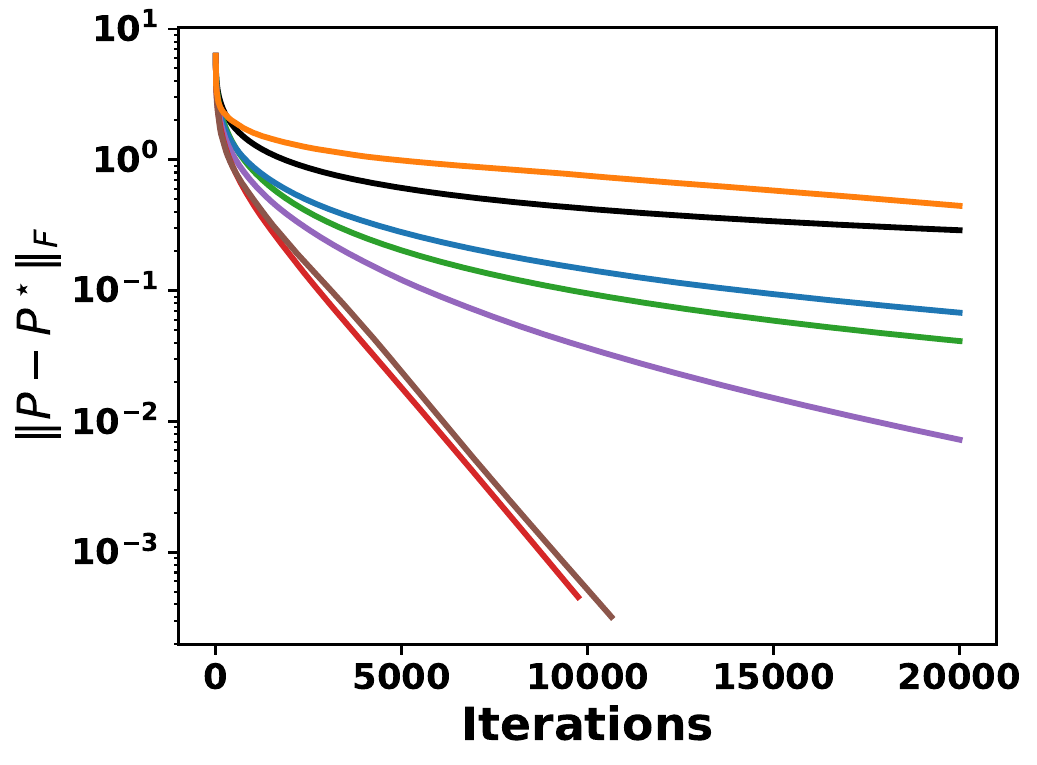}
        \caption{RSD, $\kappa(P^\star)=10^3$}
    \end{subfigure}
    \hfill
    \begin{subfigure}[t]{0.235\textwidth}
        \centering
        \includegraphics[width=\textwidth]{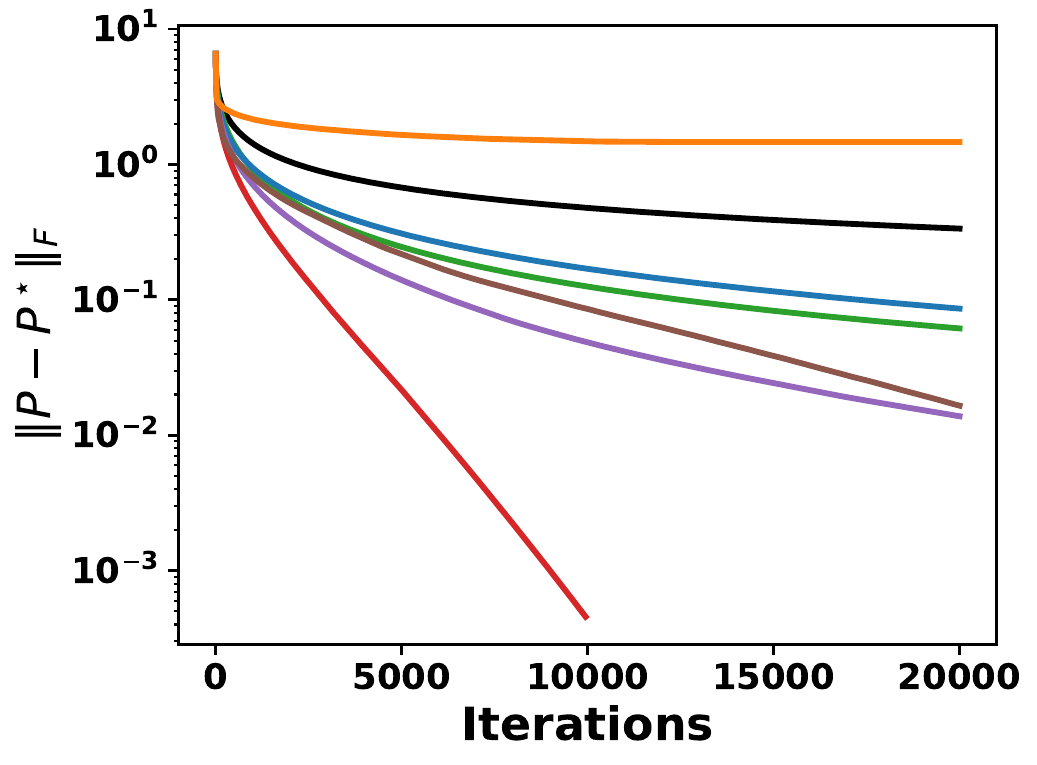}
        \caption{RSD, $\kappa(P^\star)=10^5$}
    \end{subfigure}
    \hfill
    \begin{subfigure}[t]{0.235\textwidth}
        \centering
        \includegraphics[width=\textwidth]{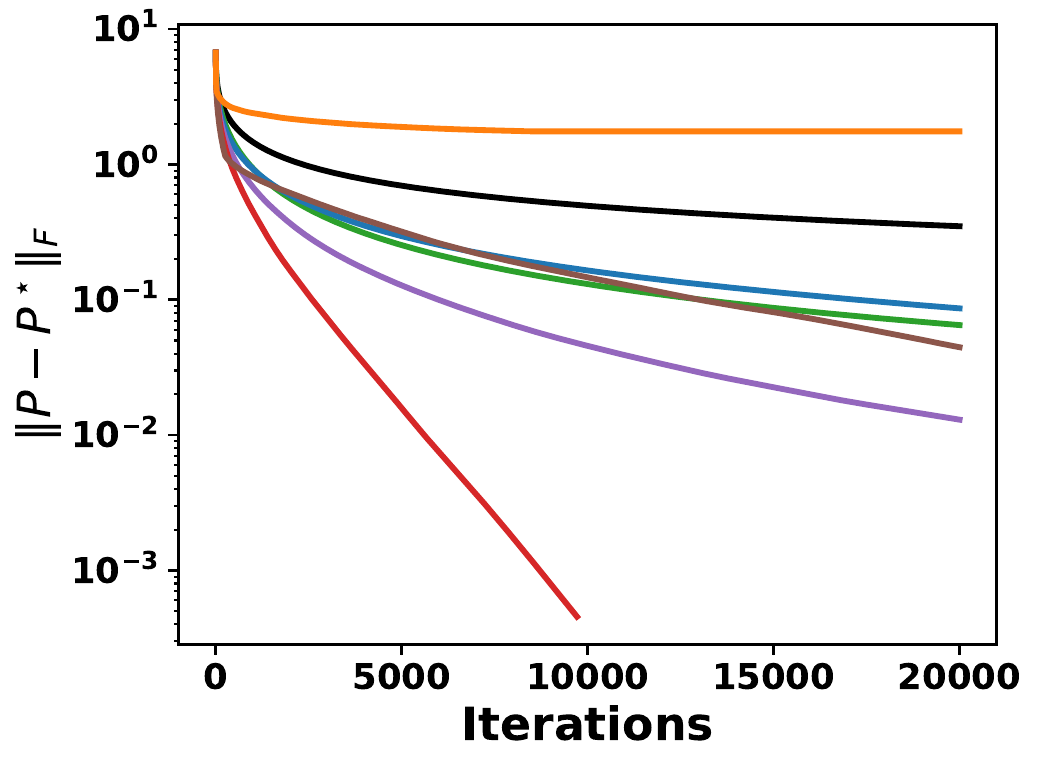}
        \caption{RSD, $\kappa(P^\star)=10^7$}
    \end{subfigure}

    \vspace{0.5em}

    \begin{subfigure}[t]{0.235\textwidth}
        \centering
        \includegraphics[width=\textwidth]{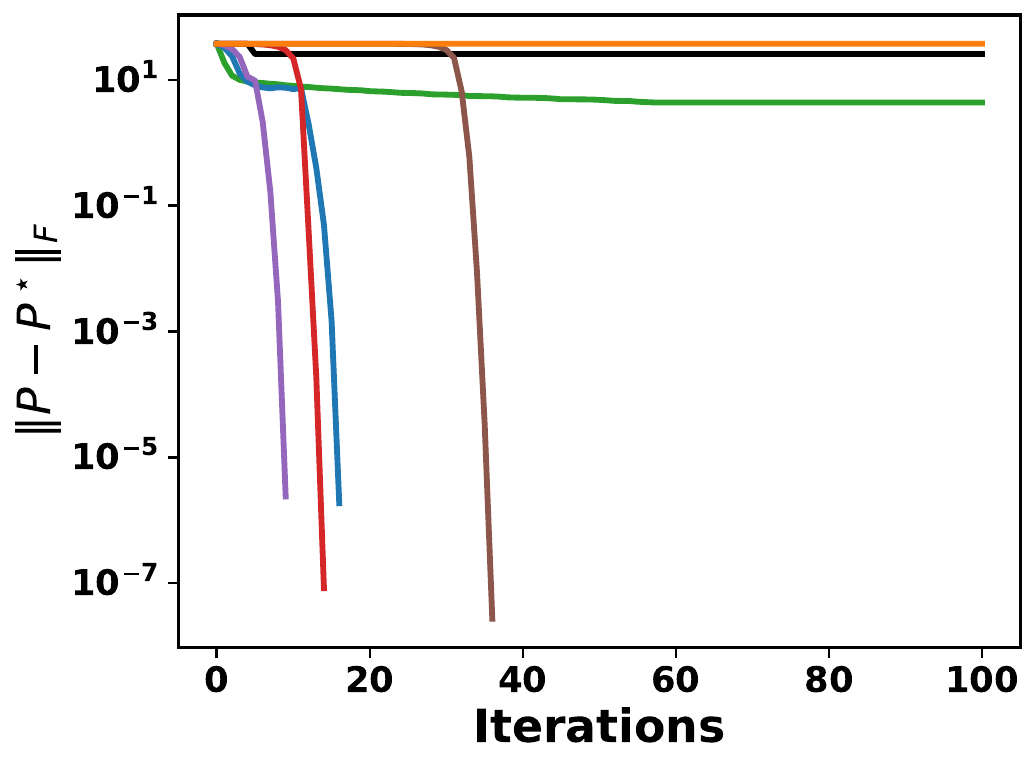}
        \caption{RTR, $\kappa(P^\star)=10^1$}
    \end{subfigure}
    \hfill
    \begin{subfigure}[t]{0.235\textwidth}
        \centering
        \includegraphics[width=\textwidth]{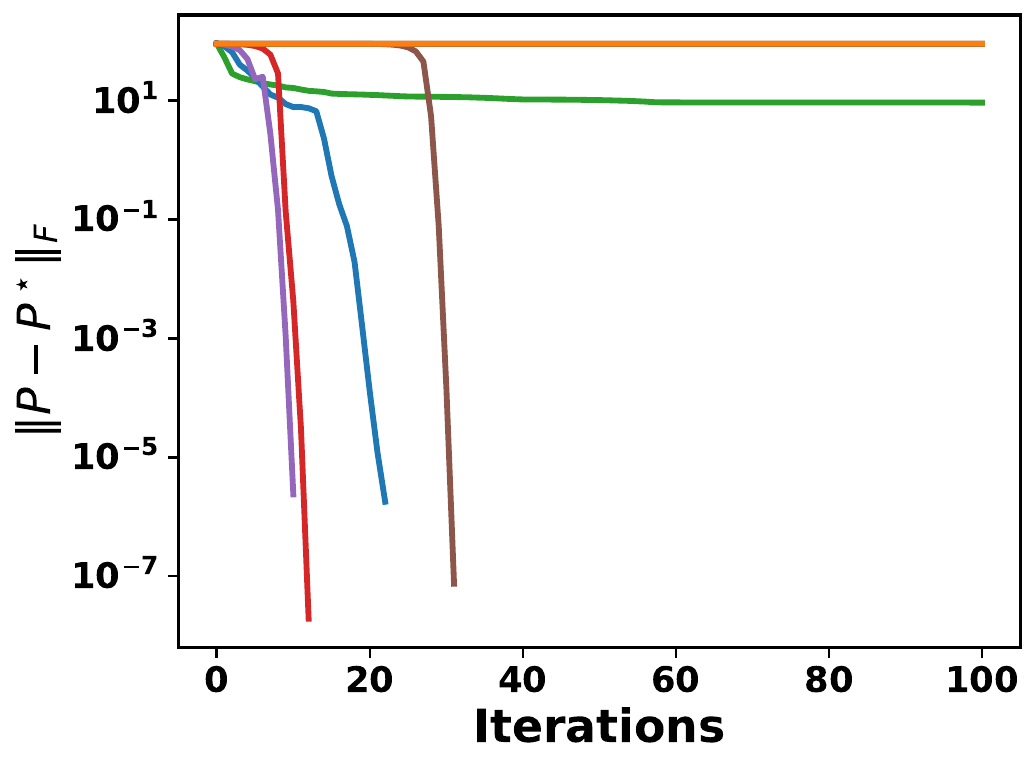}
        \caption{RTR, $\kappa(P^\star)=10^2$}
    \end{subfigure}
    \hfill
    \begin{subfigure}[t]{0.235\textwidth}
        \centering
        \includegraphics[width=\textwidth]{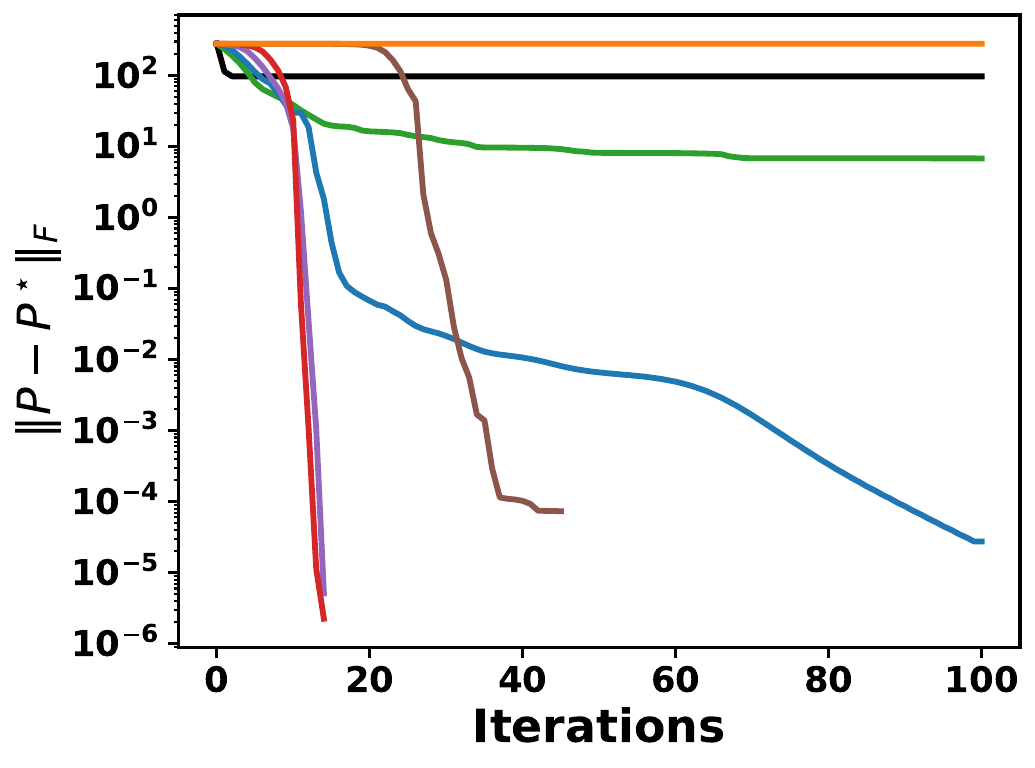}
        \caption{RTR, $\kappa(P^\star)=10^3$}
    \end{subfigure}
    \hfill
    \begin{subfigure}[t]{0.235\textwidth}
        \centering
        \includegraphics[width=\textwidth]{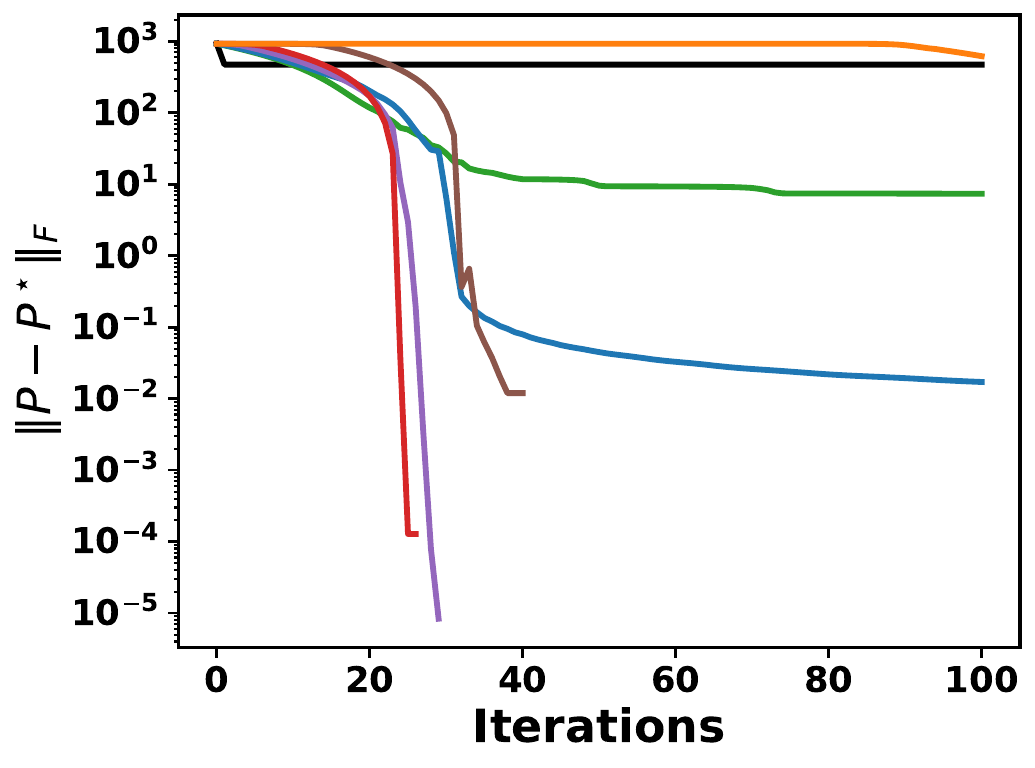}
        \caption{RTR, $\kappa(P^\star)=10^4$}
    \end{subfigure}


        \includegraphics[width=1.0\textwidth]{sylvester_custom_legend_2rows.pdf}


    \caption{Convergence curves for the Sylvester equation on $\SPD(n)$ under different condition numbers of the target solution $P^\star$. The first row shows the RSD results, while the second row shows the corresponding RTR results.}
    \label{fig:sylvester_rsd_rtr_all}
\end{figure*}

\begin{table*}[t]
\centering
\caption{Summary of RSD results for the Sylvester equation.
For each setting, we report the iteration count and runtime.
The best result in each block is underlined.}
\label{tab:ly_rsd_summary}
\resizebox{\textwidth}{!}{
\begin{tabular}{l rr rr rr rr}
\toprule
\multirow{2}{*}{metric}
& \multicolumn{2}{c}{$\mathrm{RSD},\ \kappa(P^\star)=10^{1}$}
& \multicolumn{2}{c}{$\mathrm{RSD},\ \kappa(P^\star)=10^{3}$}
& \multicolumn{2}{c}{$\mathrm{RSD},\ \kappa(P^\star)=10^{5}$}
& \multicolumn{2}{c}{$\mathrm{RSD},\ \kappa(P^\star)=10^{7}$} \\
\cmidrule(lr){2-3}\cmidrule(lr){4-5}\cmidrule(lr){6-7}\cmidrule(lr){8-9}
& \#iter & time (s)
& \#iter & time (s)
& \#iter & time (s)
& \#iter & time (s) \\
\midrule
AI
& 20000 & 26.737
& 20000 & 26.676
& 20000 & 27.654
& 20000 & 27.227 \\

LE ($\alpha=0$)
& 20000 & 28.555
& 20000 & 28.434
& 20000 & 28.398
& 20000 & 28.945 \\

BW ($\alpha=0.5$)
& 20000 & 45.980
& 20000 & 45.470
& 20000 & 45.401
& 20000 & 45.904 \\

$\alpha=0.75$
& 19286 & 44.082
& 20000 & 45.371
& 20000 & 45.068
& 20000 & 45.390 \\

$\alpha=1$
& 10693 & 24.331
& \underline{9741} & \underline{21.979}
& \underline{9945} & \underline{22.238}
& \underline{9713} & \underline{21.841} \\

$\alpha=1.25$
& 6780 & 15.473
& 10636 & 24.594
& 20000 & 45.929
& 20000 & 46.338 \\

$\alpha=1.5$
& \underline{4991} & \underline{11.454}
& 20000 & 47.917
& 20000 & 46.099
& 20000 & 46.454 \\
\bottomrule
\end{tabular}}
\end{table*}

\begin{table*}[t]
\centering
\caption{Summary of RTR results for the Sylvester equation.
For each setting, we report the iteration count and runtime.
The best result in each block is underlined.}
\label{tab:ly_rtr_summary}
\resizebox{\textwidth}{!}{
\begin{tabular}{l rr rr rr rr}
\toprule
\multirow{2}{*}{metric}
& \multicolumn{2}{c}{$\mathrm{RTR},\ \kappa(P^\star)=10^{1}$}
& \multicolumn{2}{c}{$\mathrm{RTR},\ \kappa(P^\star)=10^{2}$}
& \multicolumn{2}{c}{$\mathrm{RTR},\ \kappa(P^\star)=10^{3}$}
& \multicolumn{2}{c}{$\mathrm{RTR},\ \kappa(P^\star)=10^{4}$} \\
\cmidrule(lr){2-3}\cmidrule(lr){4-5}\cmidrule(lr){6-7}\cmidrule(lr){8-9}
& \#iter & time (s)
& \#iter & time (s)
& \#iter & time (s)
& \#iter & time (s) \\
\midrule
AI
& 100 & 40.567
& 100 & 39.913
& 100 & 40.395
& 100 & 40.562 \\

LE ($\alpha=0$)
& 100 & 2.349
& 100 & 2.131
& 100 & 15.151
& 100 & 33.256 \\

BW ($\alpha=0.5$)
& 16 & 1.884
& 22 & 5.134
& 100 & 44.835
& 100 & 34.050 \\

$\alpha=0.75$
& \underline{9} & 0.421
& \underline{10} & 0.680
& \underline{14} & 1.259
& 29 & 7.532 \\

$\alpha=1$
& 14 & \underline{0.266}
& 12 & \underline{0.268}
& \underline{14} & \underline{0.665}
& \underline{26} & \underline{3.188} \\

$\alpha=1.25$
& 36 & 0.444
& 31 & 0.351
& 45 & 3.243
& 40 & 4.427 \\

$\alpha=1.5$
& 100 & 0.935
& 100 & 0.940
& 100 & 1.950
& 100 & 3.911 \\
\bottomrule
\end{tabular}}
\end{table*}

\section{Conclusion}
In this paper, we analyzed the Alpha-Procrustes \((\mathrm{AP})\) geometry for
Riemannian optimization on the \(\mathrm{SPD}\) matrix manifold. Our results
show that, within the AP family, the metric with \(\alpha=1\) is particularly
robust for ill-conditioned optimization problems: its Riemannian Hessian condition number is bounded
independently of the condition number of the underlying SPD matrix, while the
broader \(\mathrm{AP}\) family retains nonnegative sectional curvature. These theoretical findings lead to
improved local convergence guarantees for Riemannian steepest descent and
better conditioning-dependent constants for Riemannian trust-region methods.  We
further established a geodesic convexity transfer principle from the
\(\mathrm{AP}_{1/2}\) \((\mathrm{BW})\) geometry to general
\(\mathrm{AP}_\alpha\) geometries.
Numerical experiments on weighted least squares, trace regression, and the
Sylvester equation confirm that, among the \(\alpha\)-family and the AI
metric, the Riemannian metric with \(\alpha=1\) provides stable and effective performance
for optimization problems involving ill-conditioned SPD matrices.
\section*{Acknowledgment}
This work was supported by JSPS, KAKENHI Grant Number JP25H01112, JP25H01124, 
JP24K15120, JP24H00247, JP26K02871,
Japan and JST, CREST Grant Number JPMJCR22D3, Japan.

\section*{Declarations}

\textbf{Conflict of interest.}
The authors declare that they have no conflict of interest.

\begin{appendices}






\section{Alpha-Procrustes geometry of SPD matrices}
\label{sec: details of alpha procrustes geometry}
Here, we include a complete summary of the Alpha-Procrustes geometry. We refer the reader to \cite{minh2022alpha} for a more detailed discussion.

Fix two Riemannian manifolds $(\mathcal{M},g)$ and $(\mathcal{N},h)$.
Recall that a smooth map
$
\pi:(\mathcal M,g)\to(\mathcal N,h)
$
is called a \emph{smooth submersion} if its differential
\[
D\pi(A):T_A\mathcal M\to T_{\pi(A)}\mathcal N
\]
is surjective for every \(A\in\mathcal M\). Since \(T_A\mathcal M\) is an inner-product space, it admits the orthogonal decomposition
\[
T_A\mathcal M
=
\mathcal V_A\oplus \mathcal H_A
\quad\text{with}\quad
\mathcal{V}_{A}:=\ker(D\pi(A))
\quad\text{and}\quad
\mathcal{H}_{A}:=\bigl(\ker(D\pi(A))\bigr)^\perp,
\]
where \(\mathcal V_A\) and \(\mathcal H_A\) are called the vertical and horizontal subspaces at \(A\), respectively. Because \(D\pi(A)\) is surjective, the restricted map
\[
D\pi(A):\mathcal H_A\to T_{\pi(A)}\mathcal N
\]
is a linear isomorphism. The map \(\pi\) is called a \emph{Riemannian submersion} if, for every \(A\in\mathcal M\), this restricted differential is an isometry, that is,
\[
h_{\pi(A)}\bigl(D\pi(A)[\xi],D\pi(A)[\eta]\bigr)
=
g_A(\xi,\eta)
\qquad
\text{for all }\xi,\eta\in\mathcal H_A.
\]

Fix \(\alpha\in\mathbb R\setminus\{0\}\) and define
\[
\pi_\alpha(A):=(\alpha^2 AA^\top)^{\frac1{2\alpha}}
=\exp\!\left(\frac1{2\alpha}\log(\alpha^2 AA^\top)\right).
\]
Then \(\pi_\alpha\) is a Riemannian submersion from
\(\bigl(\mathrm{GL}(n),\langle\cdot,\cdot\rangle_F\bigr)\) onto
\(\bigl(\SPD(n),g^{(\alpha)}\bigr)\).
For the submersion \(\pi_\alpha\), the ambient manifold \(\mathrm{GL}(n)\) is endowed
with the Frobenius metric. For each \(A_0\in \mathrm{GL}(n)\), the vertical space is
defined by
\begin{equation}
\label{eq:vertical_space_def}
\mathcal V_{A_0}
:=
\ker\!\big(D\pi_\alpha(A_0)\big).
\end{equation}
A direct computation (Proposition 1 of \cite{minh2022alpha}) shows that
\[
\mathcal V_{A_0}
=
\{X\in \mathrm{M}(n):XA_0^\top+A_0X^\top=0\}
=
\Skew(n)\,(A_0^\top)^{-1},
\]
where
$
\Skew(n):=\{S\in\mathbb R^{n\times n}:S^\top=-S\}.
$
Its Frobenius-orthogonal complement is therefore given by
\begin{equation}
\label{eq:horizontal_space_def}
\mathcal H_{A_0}
:=
\mathcal V_{A_0}^\perp
=
\Sym(n)\,A_0.
\end{equation}
Hence one obtains the orthogonal decomposition
\begin{equation}
\label{eq:tangent_decomposition_GL}
T_{A_0}\mathrm{GL}(n)
=
\mathcal V_{A_0}\oplus\mathcal H_{A_0}
=
\Skew(n)\,(A_0^\top)^{-1}\oplus \Sym(n)\,A_0.
\end{equation}

Now fix \(A_0\in \mathrm{GL}(n)\) and a direction \(X\in \mathrm{M}(n)\), and set
\[
P:=\pi_\alpha(A_0)\in\SPD(n),
\;\text{equivalently},\;P^{2\alpha}=\alpha^2A_0A_0^\top.
\]
Then, the differential of \(\pi_\alpha\) at \(A_0\) in the direction \(X\) is given by
\begin{equation}
\label{eq:Dpi_alpha_general}
D\pi_\alpha(A_0)[X]
=
\frac{\alpha}{2}\,
D\exp(\log P)\circ D\log(P^{2\alpha})
\big[XA_0^\top+A_0X^\top\big].
\end{equation}

We now introduce the notion of the horizontal lift.
Let $X \in T_{\pi_\alpha(A_0)}\SPD(n)$ be a tangent vector at
$P=\pi_\alpha(A_0)$. Its \emph{horizontal lift} at $A_0$ is defined as the unique vector
$\widetilde{X}\in \mathcal H_{A_0}$ satisfying
\[
D\pi_\alpha(A_0)[\widetilde{X}] = X.
\]
The notion of horizontal lift plays a fundamental role in the corresponding
Alpha-Procrustes geometry. In particular, it allows one to derive the explicit
expression of the exponential map, as well as the formulas for computing the
associated Riemannian gradient and Riemannian Hessian.

\subsection{Riemannian gradient via horizontal lift}
\label{subsec:riemannian_gradient_horizontal_lift}

The Riemannian gradient is written by the horizontal lift.
For any twice continuously differentiable function
$
f:\SPD(n)\to\mathbb{R}
$
and its lifted function
$
\widetilde f:=f\circ\pi_\alpha:\mathrm{GL}(n)\to\mathbb{R}
$, 
the Riemannian gradient of \(f\) on \(\SPD(n)\) satisfies the identity
\[
\grad^{(\alpha)} f(P)
=
D\pi_\alpha(A_0)\big[\nabla \widetilde f(A_0)\big],
\qquad
P=\pi_\alpha(A_0),
\]
for any \(A_0\in \mathrm{GL}(n)\); see \cite[Proposition~9.39]{boumal2023introduction}. Here \(\nabla \widetilde f(A_0)\) denotes the Euclidean gradient of \(\widetilde f\) on \(\mathrm{GL}(n)\) with respect to the Frobenius inner product. Moreover, \(\nabla \widetilde f(A_0)\in\mathcal H_{A_0}\) is precisely the horizontal lift of \(\grad^{(\alpha)} f(P)\).

\subsection{Exponential map in the Alpha-Procrustes geometry}
\label{subsec:expmap_alpha_procrustes}

The exponential map is written by the O'Neill geodesic projection principle.
\begin{lemma}[O'Neill's geodesic projection principle {\normalfont\cite[Proposition~2.109]{gallot1990riemannian}}]
\label{lem:oneill_geodesic_projection}
Let
$\pi:(\mathcal M,g)\to(\mathcal N,h)$
be a Riemannian submersion, and let $\mathcal H_A\subset T_A\mathcal M$ denote the corresponding horizontal subspace at $A\in\mathcal M$.
If $A:I\to\mathcal M$ is a horizontal geodesic, that is,
\[
\nabla^{\mathcal M}_{\dot A(t)}\dot A(t)=0,
\qquad
\dot A(t)\in \mathcal H_{A(t)},
\qquad t\in I,
\]
then the projected curve
$\gamma(t):=\pi(A(t))$
is a geodesic in $(\mathcal N,h)$.
\end{lemma}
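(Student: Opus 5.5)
This is O'Neill's lemma, a standard result, and I would prove it by showing that the projected curve is locally length-minimizing.

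\textbf{Step 1: horizontality is preserved.} Fix $t_0\in I$. The geodesic $A$ has horizontal initial velocity $\dot A(t_0)$. I would first use the known fact that a geodesic of $\mathcal M$ which is horizontal at one instant stays horizontal. This is consistent with the hypothesis, which already grants $\dot A(t)\in\mathcal H_{A(t)}$ for all $t$, so only $\nabla^{\mathcal M}_{\dot A}\dot A=0$ remains to be used.

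\textbf{Step 2: lengths and speeds.} Since $D\pi(A(t))$ restricted to $\mathcal H_{A(t)}$ is an isometry, $\gamma=\pi\circ A$ satisfies $\|\dot\gamma(t)\|_h=\|\dot A(t)\|_g$. Hence $\gamma$ has constant speed, and $L(\gamma|_{[a,b]})=L(A|_{[a,b]})$ on every subinterval.

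\textbf{Step 3: comparison curves (the main obstacle).} For $a<b$ close to $t_0$, let $\sigma$ be any piecewise smooth curve in $\mathcal N$ joining $\gamma(a)$ to $\gamma(b)$. Take its horizontal lift $\tilde\sigma$ starting at $A(a)$, obtained by solving the ODE $\dot{\tilde\sigma}\in\mathcal H$ with $D\pi[\dot{\tilde\sigma}]=\dot\sigma$. By the isometry property, $L(\tilde\sigma)=L(\sigma)$.

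The endpoint $\tilde\sigma(1)$ lies in the fiber $\pi^{-1}(\gamma(b))$, but it need not equal $A(b)$. So I need the following: a short horizontal geodesic minimizes distance from $A(a)$ to the whole fiber through its endpoint. Indeed, by the Gauss lemma in a normal neighborhood, the curve $A$ meets that fiber orthogonally, since its velocity is horizontal and the fiber's tangent space is vertical. A first-variation argument then shows $A$ is locally the shortest path to the fiber. Granting this, $L(\gamma|_{[a,b]})=L(A|_{[a,b]})\le L(\tilde\sigma)=L(\sigma)$.

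\textbf{Step 4: conclusion.} Step 3 shows that $\gamma$ is locally length-minimizing, and Step 2 shows it has constant speed. A constant-speed, locally minimizing curve is a geodesic, so $\gamma$ is a geodesic of $(\mathcal N,h)$.

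An alternative route, which I would use if Step 3 gets delicate, is purely computational. O'Neill's formula for horizontal lifts $\bar X,\bar Y$ gives
$$\mathcal H\nabla^{\mathcal M}_{\bar X}\bar Y=\overline{\nabla^{\mathcal N}_XY}.$$
If $\dot\gamma$ is extended locally to a vector field, the horizontal part of $\nabla^{\mathcal M}_{\dot A}\dot A=0$ then projects to $\nabla^{\mathcal N}_{\dot\gamma}\dot\gamma=0$ directly. That formula is itself derived from the Koszul formula, using that $\pi$-related fields have $\pi$-related brackets.
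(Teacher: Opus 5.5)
The paper does not prove this lemma; it only cites Gallot--Hulin--Lafontaine. So your argument has to stand on its own, and your main route has a genuine gap in Step~3.

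\textbf{The gap in Step~3.} After lifting $\sigma$, you need $L(A|_{[a,b]})\le L(\tilde\sigma)$ for every horizontal curve $\tilde\sigma$ from $A(a)$ to the fiber $F=\pi^{-1}(\gamma(b))$. That is, you need a short horizontal geodesic to realize the distance from $A(a)$ to $F$. This claim is essentially as strong as the lemma itself: if the lemma holds, then $L(A|_{[a,b]})=L(\gamma|_{[a,b]})=d_{\mathcal N}(\gamma(a),\gamma(b))\le L(\pi\circ\tilde\sigma)=L(\tilde\sigma)$. Your justification does not establish it.
\begin{itemize}
\item Orthogonality of $A$ to $F$ at $A(b)$ is just horizontality. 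The Gauss lemma concerns geodesic spheres about a point. At most it shows that $F$ is tangent at $A(b)$ to the sphere of radius $L(A|_{[a,b]})$ about $A(a)$; it does not show that $F$ stays outside the open ball.
\item The first variation formula only makes $A$ a \emph{critical point} of length among curves from $A(a)$ ending on $F$, not a minimizer. Minimality needs second-order input, for example a tubular-neighbourhood or generalized Gauss lemma for the fiber (absence of focal points). You do not supply this.
\item A smaller issue: when $\mathcal M$ is incomplete, the horizontal lift of an arbitrary competitor need not exist on its whole domain. You would first have to discard competitors longer than $L(\gamma|_{[a,b]})$.
\end{itemize}

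\textbf{Your fallback route is correct.} It is the standard proof. Since $\dot A(t)$ is horizontal with $D\pi(A(t))[\dot A(t)]=\dot\gamma(t)$, you can extend $\dot\gamma$ near $\gamma(t_0)$ to a vector field $X$. This is possible because $\gamma$ has the same constant speed as $A$, which is nonzero unless $A$ is constant. Then $\dot A=\overline X\circ A$, so $0=\nabla^{\mathcal M}_{\dot A}\dot A=(\nabla^{\mathcal M}_{\overline X}\overline X)\circ A$. The horizontal part of this is the lift of $\nabla^{\mathcal N}_{\dot\gamma}\dot\gamma$, which therefore vanishes. This is the same connection-projection formula the paper uses in \eqref{eq:alpha_connection_projection}.

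\textbf{A length argument also works if run in the other direction.} Take the $\mathcal N$-geodesic $\beta$ with $\beta(t_0)=\gamma(t_0)$ and $\dot\beta(t_0)=\dot\gamma(t_0)$, and its horizontal lift $\tilde\beta$ through $A(t_0)$. Because $\|D\pi[v]\|\le\|v\|$, one has $d_{\mathcal N}(\pi(p),\pi(q))\le d_{\mathcal M}(p,q)$. Hence, for $s$ near $t_0$,
\[
L(\tilde\beta|_{[t_0,s]})=L(\beta|_{[t_0,s]})=d_{\mathcal N}(\beta(t_0),\beta(s))\le d_{\mathcal M}(\tilde\beta(t_0),\tilde\beta(s)).
\]
So $\tilde\beta$ is a constant-speed local minimizer, hence an $\mathcal M$-geodesic with initial velocity $\dot A(t_0)$. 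By uniqueness $\tilde\beta=A$ near $t_0$, and therefore $\gamma=\beta$ locally. This uses only the easy inequality $d_{\mathcal N}(\pi(p),\pi(q))\le d_{\mathcal M}(p,q)$, rather than minimality of horizontal geodesics to fibers.
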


We now apply Lemma~\ref{lem:oneill_geodesic_projection} to the Alpha-Procrustes geometry.
\begin{theorem}[Exponential map for \(\alpha\neq 0\)]
\label{thm:Exp_alpha_general}
Let \(\alpha\in\mathbb R\setminus\{0\}\), \(P\in\SPD(n)\), and \(X\in \Sym(n)\).
Define \(Y\in\Sym(n)\) by
\begin{equation}
\label{eq:def_Y_from_X}
(D\exp)(\log P)\circ (D\log)(P^{2\alpha})
\big[YP^{2\alpha}+P^{2\alpha}Y\big]
=
2\alpha\,X.
\end{equation}
Equivalently,
$
Y=\mathcal L_{P,\alpha}(2\alpha\,X).
$
Then the exponential map at \(P\) is given by
\begin{equation}
\label{eq:geodesic_IVP_unified}
\Exp_P^{(\alpha)}(tX)
=
\Big((I+tY)\,P^{2\alpha}\,(I+tY)\Big)^{\frac1{2\alpha}}.
\end{equation}
In particular, we have
\begin{equation}
\label{eq:expmap_general_unified}
\Exp_P^{(\alpha)}(X)
=
\Big((I+Y)\,P^{2\alpha}\,(I+Y)\Big)^{\frac1{2\alpha}},
\qquad
Y=\mathcal L_{P,\alpha}(2\alpha\,X).
\end{equation}
\end{theorem}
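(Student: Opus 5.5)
The plan is to combine O'Neill's geodesic projection principle (Lemma~\ref{lem:oneill_geodesic_projection}) with the submersion $\pi_\alpha:(\mathrm{GL}(n),\langle\cdot,\cdot\rangle_F)\to(\SPD(n),g^{(\alpha)})$. Since $\mathrm{GL}(n)$ is an open subset of the flat space $\mathrm{M}(n)$, its geodesics are straight lines $A(t)=A_0+t\widetilde X$. The work is therefore to find the correct lift and check that such a line stays horizontal.

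\textbf{Step 1 (choice of lift).} Fix $P$ and choose $A_0:=\alpha^{-1}P^{\alpha}$, which is symmetric and invertible, so that
\[
\alpha^2A_0A_0^\top=P^{2\alpha}
\quad\text{and}\quad
\pi_\alpha(A_0)=P .
\]
Look for the horizontal lift of $X$ in the form $\widetilde X=YA_0\in\mathcal H_{A_0}=\Sym(n)A_0$ with $Y\in\Sym(n)$. Using \eqref{eq:Dpi_alpha_general}, we have
\[
\widetilde XA_0^\top+A_0\widetilde X^\top
=YA_0A_0+A_0A_0Y
=\alpha^{-2}\bigl(YP^{2\alpha}+P^{2\alpha}Y\bigr).
\]
Hence the condition $D\pi_\alpha(A_0)[\widetilde X]=X$ becomes
\[
\frac{1}{2\alpha}\,(D\exp)(\log P)\circ(D\log)(P^{2\alpha})\bigl[YP^{2\alpha}+P^{2\alpha}Y\bigr]=X,
\]
which is exactly \eqref{eq:def_Y_from_X}. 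By the defining property of $\mathcal L_{P,\alpha}$ (existence and uniqueness, visible from Theorem~\ref{thm:Lalpha_entrywise}, since the coefficients $c^{(\alpha)}_{ij}$ are finite and nonzero), it follows that $Y=\mathcal L_{P,\alpha}(2\alpha X)$.

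\textbf{Step 2 (the line is a horizontal geodesic).} Set
\[
A(t)=A_0+tYA_0=(I+tY)A_0 .
\]
This curve has zero acceleration in the flat ambient metric. Its velocity is $\dot A(t)=YA_0$, and we can write
\[
YA_0=\bigl[Y(I+tY)^{-1}\bigr]A(t).
\]
The matrix $Y(I+tY)^{-1}$ is symmetric, because $Y$ and $(I+tY)^{-1}$ commute. So $\dot A(t)\in\Sym(n)A(t)=\mathcal H_{A(t)}$ for every $t$ with $I+tY$ invertible.

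\textbf{Step 3 (projection).} Apply Lemma~\ref{lem:oneill_geodesic_projection}. The projected curve
\[
\gamma(t)=\pi_\alpha(A(t))=\bigl(\alpha^2(I+tY)A_0A_0^\top(I+tY)\bigr)^{1/(2\alpha)}=\bigl((I+tY)P^{2\alpha}(I+tY)\bigr)^{1/(2\alpha)}
\]
is a $g^{(\alpha)}$-geodesic. It satisfies $\gamma(0)=P$ and $\dot\gamma(0)=D\pi_\alpha(A_0)[\widetilde X]=X$ by Step~1. Uniqueness of geodesics with given initial data then gives $\Exp^{(\alpha)}_P(tX)=\gamma(t)$, i.e. \eqref{eq:geodesic_IVP_unified}. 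Setting $t=1$ yields \eqref{eq:expmap_general_unified}.

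\textbf{Main obstacle.} The delicate point is the domain. Steps~2 and~3 need $A(t)\in\mathrm{GL}(n)$, i.e.\ $I+tY$ invertible, so that $\gamma(t)$ stays in $\SPD(n)$. This holds on the maximal interval containing $0$ on which $I+tY$ is nonsingular. The formula should therefore be read for $t$ (or $X$) in that range, namely where the exponential map is defined, for instance $\|tY\|_2<1$. I would state this restriction explicitly rather than claim the formula globally.
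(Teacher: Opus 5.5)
Your proposal is correct and follows essentially the same route as the paper: both project the straight line $A(t)=(I+tY)A_0$ via O'Neill's principle, check horizontality through the symmetry of $Y(I+tY)^{-1}$, and match $\dot\gamma(0)=X$. You do the last step with \eqref{eq:Dpi_alpha_general} at the concrete fiber point $A_0=\alpha^{-1}P^{\alpha}$, while the paper uses the chain rule on $S(t)$ at an arbitrary $A_0$; this is the same computation, and your explicit restriction to the interval where $I+tY$ is invertible sharpens the paper's ``sufficiently small interval''.
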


\begin{proof}
Choose $A_0\in \mathrm{GL}(n)$ such that $\pi_\alpha(A_0)=P$. Since
\[
\pi_\alpha:\bigl(\mathrm{GL}(n),\langle\cdot,\cdot\rangle_F\bigr)\to \bigl(\SPD(n),g^{(\alpha)}\bigr)
\]
is a Riemannian submersion with horizontal space
\[
\mathcal H_A=\Sym(n)\,A,
\]
it suffices to construct a horizontal geodesic in $\mathrm{GL}(n)$ projecting to the desired curve.

Let $Y\in\Sym(n)$ and define
\[
A(t):=(I+tY)A_0
\]
for $t$ in a sufficiently small interval around $0$ such that $I+tY$ is invertible. Since $\mathrm{GL}(n)$ is an open subset of $\mathrm{M}(n)$ endowed with the Frobenius metric, $A(t)$ is a geodesic. Moreover,
\[
\dot A(t)=YA_0=Y(I+tY)^{-1}A(t).
\]
Because $Y\in\Sym(n)$ and $Y$ commutes with $(I+tY)^{-1}$, the matrix $Y(I+tY)^{-1}$ is symmetric. Hence
\[
\dot A(t)\in \Sym(n)\,A(t)=\mathcal H_{A(t)},
\]
so $A(t)$ forms a horizontal geodesic. Therefore, by Lemma~\ref{lem:oneill_geodesic_projection}, the projected curve
\[
\gamma(t):=\pi_\alpha(A(t))
\]
is a geodesic in $\SPD(n)$.

We now write $\gamma(t)$ explicitly.
Using $P^{2\alpha}=\alpha^2A_0A_0^\top$, we obtain
\[
\gamma(t)^{2\alpha}
=
\alpha^2A(t)A(t)^\top
=
\alpha^2(I+tY)A_0A_0^\top(I+tY)^\top
=
(I+tY)P^{2\alpha}(I+tY),
\]
and hence we obtain
\[
\gamma(t)=\Big((I+tY)\,P^{2\alpha}\,(I+tY)\Big)^{\frac1{2\alpha}}.
\]

It remains to match the initial tangent vector. Set
\[
S(t):=(I+tY)P^{2\alpha}(I+tY),
\qquad
g(Z):=Z^{1/(2\alpha)}=\exp\!\Big(\frac{1}{2\alpha}\log Z\Big).
\]
Then we have $\gamma(t)=g(S(t))$,
\[
S(0)=P^{2\alpha},
\qquad
\dot S(0)=YP^{2\alpha}+P^{2\alpha}Y,
\]
and therefore, by the chain rule, we get
\[
\dot\gamma(0)
=
Dg(P^{2\alpha})[\dot S(0)]
=
\frac{1}{2\alpha}\,
D\exp(\log P)\circ D\log(P^{2\alpha})
\big[YP^{2\alpha}+P^{2\alpha}Y\big].
\]
Thus $\dot\gamma(0)=X$ if and only if
\[
D\exp(\log P)\circ D\log(P^{2\alpha})
\big[YP^{2\alpha}+P^{2\alpha}Y\big]
=
2\alpha\,X,
\]
that is,
\[
Y=\mathcal L_{P,\alpha}(2\alpha\,X).
\]
Substituting this into the above expression for $\gamma(t)$ yields
\[
\Exp_P^{(\alpha)}(tX)
=
\Big((I+tY)\,P^{2\alpha}\,(I+tY)\Big)^{\frac1{2\alpha}},
\]
and in particular
\[
\Exp_P^{(\alpha)}(X)
=
\Big((I+Y)\,P^{2\alpha}\,(I+Y)\Big)^{\frac1{2\alpha}}.
\]
\end{proof}

\subsection{Affine connection and Riemannian Hessian induced by the Riemannian submersion}
\label{hession_submersion}


Let $\overline{\nabla}$ denote the Levi--Civita connection of
$(\mathrm{GL}(n),\langle\cdot,\cdot\rangle_F)$.
Since $\mathrm{GL}(n)$ is an open subset of the Euclidean space
$(\mathrm{M}(n),\langle\cdot,\cdot\rangle_F)$, the Levi--Civita connection
coincides with the flat connection.
More precisely, let \(X,Y\) be smooth vector fields on \(\mathrm{GL}(n)\). Then the connection is given by
\[
(\overline{\nabla}_X Y)_{A_0}
=
DY(A_0)[X(A_0)],
\qquad
A_0\in \mathrm{GL}(n),
\]
where \(DY(A_0):T_{A_0}\mathrm{GL}(n)\to \mathrm{M}(n)\) is the differential of the map
\(Y:\mathrm{GL}(n)\to \mathrm{M}(n)\) at \(A_0\).

For $X,Y\in T_P\SPD(n)$, let $\widetilde{X},\widetilde{Y}\in\mathcal H_{A_0}$ denote their
horizontal lifts at $A_0$, i.e.,
\[
D\pi_\alpha(A_0)[\widetilde{X}] = X,
\qquad
D\pi_\alpha(A_0)[\widetilde{Y}] = Y.
\]
In this representation, the horizontal lifts are given by
\[
\widetilde{X}=S_X A_0,
\qquad
\widetilde{Y}=S_Y A_0,
\qquad
S_X,S_Y\in\Sym(n).
\]
By \eqref{eq:horizontal_space_def} and \eqref{eq:tangent_decomposition_GL}, the orthogonal projection
\[
P_{\mathcal H_{A_0}}:T_{A_0}\mathrm{GL}(n)\to \mathcal H_{A_0}
\]
is well defined. For any \(Z\in T_{A_0}\mathrm{GL}(n)\simeq \mathrm{M}(n)\), since \(\mathcal H_{A_0}=\Sym(n)A_0\), there exists a unique matrix \(S_Z\in\Sym(n)\) such that
\begin{equation}
\label{eq:PH_explicit}
P_{\mathcal H_{A_0}}(Z)=S_ZA_0.
\end{equation}
where \(S_Z\in\Sym(n)\) is the unique solution of the Lyapunov equation
\begin{equation}
\label{eq:PH_sylvester}
S_ZM+MS_Z
=
ZA_0^\top+A_0Z^\top,
\qquad
M:=A_0A_0^\top\in\SPD(n).
\end{equation}
Since \(M\in\SPD(n)\), the operator \(S\mapsto SM+MS\) is invertible on \(\Sym(n)\), so \(S_Z\) is uniquely determined.

Let \(U,V\) be smooth vector fields on \(\SPD(n)\), and let \(\widetilde U,\widetilde V\) be their horizontal lifts to \(\mathrm{GL}(n)\). Then the Levi--Civita connection on \((\SPD(n),g^{(\alpha)})\) is obtained by projecting the lifted ambient connection onto the horizontal space; see \cite[Proposition~5.3.4]{absil2008optimization}.
\begin{equation}
\label{eq:alpha_connection_projection}
\bigl(\nabla^{(\alpha)}_{U}V\bigr)_P
=
D\pi_\alpha(A_0)
\Big[
P_{\mathcal H_{A_0}}
\bigl(
(\overline{\nabla}_{\widetilde{U}}\widetilde{V})_{A_0}
\bigr)
\Big]
=
D\pi_\alpha(A_0)
\Big[
P_{\mathcal H_{A_0}}
\bigl(
D\widetilde{V}(A_0)[\widetilde{U}(A_0)]
\bigr)
\Big],
\end{equation}
where \(P=\pi_\alpha(A_0)\).

Let \(\widetilde X\in\mathcal H_{A_0}\) be the horizontal lift of \(X\in T_P\SPD(n)\). By the discussion in \S\ref{subsec:riemannian_gradient_horizontal_lift}, \(\nabla \widetilde f(A_0)\in\mathcal H_{A_0}\) is the horizontal lift of \(\grad^{(\alpha)}f(P)\). Therefore, using \eqref{eq:alpha_connection_projection} and the flatness of \(\overline{\nabla}\), we obtain
\begin{equation}
\label{eq:alpha_hessian_explicit}
\mathrm{Hess}^{(\alpha)} f(P)[X]
=
D\pi_\alpha(A_0)
\Big[
P_{\mathcal H_{A_0}}\big(
D(\nabla \widetilde f)(A_0)[\widetilde X]
\big)
\Big].
\end{equation}

\section{Proof of Proposition~\ref{prop:ap_nonnegative_curvature}}

\label{sec:proof_ap_nonnegative_curvature}
\begin{proof}
We use the quotient representation of the AP geometry. Let
\(\mathrm{GL}(n)\) be endowed with the Frobenius metric, which is flat. For
\(G\in\mathrm{GL}(n)\), define
\[
    \pi_\alpha:\mathrm{GL}(n)\to\SPD,
    \qquad
    \pi_\alpha(G):=(GG^\top)^{1/(2\alpha)}.
\]
Equivalently, for every \(G\in\mathrm{GL}(n)\),
\[
    \Phi_\alpha\bigl(\pi_\alpha(G)\bigr)
    =
    \bigl(\pi_\alpha(G)\bigr)^{2\alpha}
    =
    GG^\top,
\]
where \(\Phi_\alpha(P)=P^{2\alpha}\). Thus \(\pi_\alpha\) is the AP analogue of the standard Procrustes quotient map for the
\(\mathrm{BW}\) geometry.

By the quotient construction of the \(\AP_\alpha\) metric
\cite{minh2022alpha}, the map \(\pi_\alpha\) is a Riemannian submersion from
the flat manifold
$
    \bigl(\mathrm{GL}(n),\langle\cdot,\cdot\rangle_F\bigr)
$
onto \((\SPD,g^{\AP})\).
For this submersion, the vertical and horizontal spaces are defined by
\[
    \mathcal V_G:=\ker(D\pi_\alpha(G)),
    \qquad
    \mathcal H_G:=\mathcal V_G^\perp,
\]
and, as summarized in Appendix~\ref{sec: details of alpha procrustes geometry},
they satisfy
\[
    T_G\mathrm{GL}(n)
    =
    \mathcal V_G\oplus\mathcal H_G,
    \qquad
    \mathcal H_G=\Sym(n)\,G;
\]
Moreover, for any
\(X\in T_P\SPD\), its horizontal lift at \(G\in\pi_\alpha^{-1}(P)\) is the
unique vector \(\widetilde X\in\mathcal H_G\) satisfying
$
    D\pi_\alpha(G)[\widetilde X]=X;
$
please also refer to Appendix~\ref{sec: details of alpha procrustes geometry} in detail.

Let \(\sigma\subset T_P\SPD\) be a two-dimensional tangent plane, and let
\(\widetilde{\sigma}\subset\mathcal H_G\) be its horizontal lift at some
\(G\in\pi_\alpha^{-1}(P)\). Choose linearly independent horizontal vectors
\(\widetilde U,\widetilde V\in\widetilde{\sigma}\). By O'Neill's curvature
formula for Riemannian submersions \cite[Section~2.6]{gallot1990riemannian},
we have
\[
    K^{\AP_\alpha}(P;\sigma)
    =
    K^{\mathrm{GL}(n)}(G;\widetilde{\sigma})
    +
    \frac{3}{4}
    \frac{
        \|[\widetilde U,\widetilde V]^{\mathcal V_G}\|_F^2
    }{
        \|\widetilde U\|_F^2\|\widetilde V\|_F^2
        -
        \langle \widetilde U,\widetilde V\rangle_F^2
    },
\]
where \([\widetilde U,\widetilde V]^{\mathcal V_G}\) denotes the vertical
component of the Lie bracket with respect to the orthogonal decomposition
\(T_G\mathrm{GL}(n)=\mathcal V_G\oplus\mathcal H_G\).

Since \(\mathrm{GL}(n)\) is an open subset of the Euclidean space
\(\mathrm{M}(n)\) endowed with the Frobenius metric, it is flat. Hence
\[
    K^{\mathrm{GL}(n)}(G;\widetilde{\sigma})=0.
\]
The second term in O'Neill's formula is nonnegative. Therefore,
\[
    K^{\AP_\alpha}(P;\sigma)\ge 0
\]
for every \(P\in\SPD\) and every two-dimensional tangent plane
\(\sigma\subset T_P\SPD\). Hence the \(\AP\) geometry has
nonnegative sectional curvature.
\end{proof}




\end{appendices}


\bibliography{sn-bibliography}

\end{document}